\numberwithin{equation}{section}
\newtheoremstyle{mystyle}
{}
{}
{\normalfont}
{ }
{\bfseries}
{}
{10pt}
{ }
\theoremstyle{mystyle}
\newtheorem{theorem}{Theorem}
\newtheorem{lemma}{Lemma}
\newtheorem{remark}{Remark}
\def\Diag{\mathop{\rm Diag}\nolimits}
\def\tr{\mathop{\rm tr}\nolimits}
\def\vec{\mathop{\rm vec}\nolimits}
\def\vech{\mathop{\rm vech}\nolimits}
\def\det{\mathop{\rm det}\nolimits}
\def\rank{\mathop{\rm rank}\nolimits}
\newcommand{\dd}{\mathrm d}
\title[QBIC of SEM for diffusion processes]{QBIC of SEM for diffusion processes from discrete observations}
\author[S. Kusano]{Shogo Kusano $^{1}$}
\author[M. Uchida]{Masayuki Uchida $^{1,2}$}
\address{$^{1}$Graduate School of Engineering Science, Osaka University}
\address{$^{2}$Center for Mathematical Modeling and Data Science (MMDS), Osaka University and JST CREST}
\begin{document}
\begin{abstract}
\fontsize{8pt}{10pt}\selectfont
We deal with a model selection problem for structural equation modeling (SEM) with latent variables for diffusion processes. 
Based on the asymptotic expansion of the marginal quasi-log likelihood, we propose two types of quasi-Bayesian 
information criteria of the SEM. It is shown that the information criteria have model selection consistency. Furthermore, we examine the finite-sample performance of the proposed information criteria by numerical experiments.
\end{abstract}
\keywords{Structural equation modeling with latent variables; Quasi-Bayesian 
information criterion; Diffusion process; High-frequency data.}
\maketitle

\section{Introduction}
\fontsize{10pt}{16pt}\selectfont
We consider a model selection problem for structural equation modeling (SEM) with latent variables for diffusion processes. 
First, we set the true model of the SEM. The stochastic process $\mathbb{X}_{1,0,t}$ is defined as the following factor model:
\begin{align}
    \mathbb{X}_{1,0,t}&={\bf{\Lambda}}_{x_1,0}\xi_{0,t}+\delta_{0,t}\label{X0},
\end{align}
where $\{\mathbb{X}_{1,0,t}\}_{t\geq 0}$ is a $p_1$-dimensional observable vector process, $\{\xi_{0,t}\}_{t\geq 0}$ is a $k_{1}$-dimensional latent common factor vector process, $\{\delta_{0,t}\}_{t\geq 0}$ is a $p_{1}$-dimensional latent unique factor vector process, and ${\bf{\Lambda}}_{x_1,0}\in\mathbb{R}^{p_1\times k_1}$ is a constant loading matrix. $p_1$ is not zero, $k_{1}\leq p_1$, and $p_1$ and $k_{1}$ are fixed. Assume that $\{\xi_{0,t}\}_{t\geq 0}$ and $\{\delta_{0,t}\}_{t\geq 0}$ satisfy the following stochastic differential equations:
\begin{align}
    \quad\dd \xi_{0,t}&=B_{1}(\xi_{0,t})\dd t+{\bf{S}}_{1,0}\dd W_{1,t}, \ \ 
    \xi_{0,0}=c_{1}, \\
    \quad\dd\delta_{0,t}&=B_{2}(\delta_{0,t})\dd t+{\bf{S}}_{2,0}\dd W_{2,t}, \ \ 
    \delta_{0,0}=c_{2},     
\end{align}
where $B_{1}:\mathbb{R}^{k_{1}}\rightarrow\mathbb{R}^{k_{1}}$, ${\bf{S}}_{1,0}\in\mathbb{R}^{k_{1}\times r_{1}}$, $c_{1}\in\mathbb{R}^{k_{1}}$,
$B_{2}:\mathbb{R}^{p_1}\rightarrow\mathbb{R}^{p_1}$, ${\bf{S}}_{2,0}\in\mathbb{R}^{p_1\times r_{2}}$, $c_{2}\in\mathbb{R}^{p_1}$, and $W_{1,t}$ and $W_{2,t}$ are $r_{1}$ and $r_{2}$-dimensional standard Wiener processes, respectively. The stochastic process $\mathbb{X}_{2,0,t}$ is defined by the factor model as follows:
\begin{align}
    \mathbb{X}_{2,0,t}&={\bf{\Lambda}}_{x_2,0}\eta_{0,t}+\varepsilon_{0,t},
\end{align}
where $\{\mathbb{X}_{2,0,t}\}_{t\geq 0}$ is a $p_2$-dimensional observable vector process, $\{\eta_{0,t}\}_{t\geq 0}$ is a $k_{2}$-dimensional latent common factor vector process, $\{\varepsilon_{0,t}\}_{t\geq 0}$ is a $p_{2}$-dimensional latent unique factor vector process, and 
${\bf{\Lambda}}_{x_2,0}\in\mathbb{R}^{p_2\times k_{2}}$ is a constant loading matrix. $p_2$ is not zero, $k_{2}\leq p_2$, and $p_2$ and $k_{2}$ are fixed. Suppose that $\{\varepsilon_{0,t}\}_{t\geq 0}$ satisfies the following  stochastic differential equation:
\begin{align}
    \quad\dd\varepsilon_{0,t}&=B_{3}(\varepsilon_{0,t})\dd t+{\bf{S}}_{3,0}\dd W_{3,t}, \ \ 
     \varepsilon_{0,0}=c_{3},      
\end{align}
where $B_{3}:\mathbb{R}^{p_2}\rightarrow\mathbb{R}^{p_2}$, ${\bf{S}}_{3,0}\in\mathbb{R}^{p_2\times r_{3}}$, $c_{3}\in\mathbb{R}^{p_2}$,
and $W_{3,t}$ is an $r_{3}$-dimensional standard Wiener process. Set $p=p_1+p_2$. Moreover, the relationship between $\eta_{0,t}$ and  $\xi_{0,t}$ is expressed as follows:
\begin{align}
    \eta_{0,t}={\bf{B}}_0\eta_{0,t}+{\bf{\Gamma}}_0\xi_{0,t}+\zeta_{0,t},
\end{align}
where ${\bf{B}}_0\in\mathbb{R}^{k_{2}\times k_{2}}$ is a constant loading matrix, whose diagonal elements are zero, and ${\bf{\Gamma}}_0\in\mathbb{R}^{k_{2}\times k_{1}}$ is a constant loading matrix. $\{\zeta_{0,t}\}_{t\geq 0}$ is a $k_{2}$-dimensional latent unique factor vector process defined by the following stochastic differential equation: 
\begin{align}
    \quad\dd\zeta_{0,t}=B_{4}(\zeta_{0,t})\dd t+{\bf{S}}_{4,0}\dd W_{4,t},\ \ 
    \zeta_{0,0}=c_{4},\label{zeta0}
\end{align}
where $B_{4}:\mathbb{R}^{k_{2}}\rightarrow\mathbb{R}^{k_{2}}$, ${\bf{S}}_{4,0}\in\mathbb{R}^{k_{2}\times r_{4}}$, $c_{4}\in\mathbb{R}^{k_{2}}$, and $W_{4,t}$ is an $r_{4}$-dimensional standard Wiener process. $W_{1,t}$, $W_{2,t}$, $W_{3,t}$ and $W_{4,t}$ are independent standard Wiener processes on a stochastic basis with usual conditions $(\Omega, \mathscr{F}, \{\mathscr{F}_t\}, {\bf{P}})$. 
Let $\mathbb{X}_{0,t}=(\mathbb{X}_{1,0,t}^{\top},\mathbb{X}_{2,0,t}^{\top})^{\top}$, where $\top$ denotes the transpose.
Below, we simply write $\mathbb{X}_{0,t}$ as $\mathbb{X}_{t}$. $\mathbb{X}_n=(\mathbb{X}_{t_{i}^n})_{0\leq i\leq n}$ are discrete observations, where $t_{i}^n=ih_n$, $h_n=n^{-1}T$, $T>0$ is fixed, and $p_1$, $p_2$, $k_{1}$ and $k_{2}$ are independent of $n$. 
We consider 
a model selection problem among the following $M$ parametric models
based on discrete observations. Let $\theta_m\in\Theta_m$ be the parameter of Model $m\in\{1,\cdots,M\}$, where $\Theta_m$ is a convex and compact subset of $\mathbb{R}^{q_m}$ with a locally Lipschitz boundary; see, e.g., Adams and Fournier \cite{Adams(2003)}.
The stochastic process $\mathbb{X}^{\theta}_{1,m,t}$ is defined by the factor model as follows:
\begin{align}
    \mathbb{X}^{\theta}_{1,m,t}&={\bf{\Lambda}}^{\theta}_{x_1,m}\xi^{\theta}_{m,t}+\delta^{\theta}_{m,t}\label{Xm}, 
\end{align}
where $\{\mathbb{X}^{\theta}_{1,m,t}\}_{t\geq 0}$ is a $p_1$-dimensional observable vector process, $\{\xi^{\theta}_{m,t}\}_{t\geq 0}$ is a $k_{1}$-dimensional latent common factor vector process, $\{\delta^{\theta}_{m,t}\}_{t\geq 0}$ is a $p_{1}$-dimensional latent unique factor vector process, and ${\bf{\Lambda}}^{\theta}_{x_1,m}\in\mathbb{R}^{p_1\times k_{1}}$ is a constant loading matrix. Suppose that $\{\xi^{\theta}_{m,t}\}_{t\geq 0}$ and $\{\delta^{\theta}_{m,t}\}_{t\geq 0}$ satisfy the following  stochastic differential equations:
\begin{align}
    \quad\dd \xi^{\theta}_{m,t}&=B_{1}(\xi^{\theta}_{m,t})\dd t+{\bf{S}}^{\theta}_{1,m}\dd W_{1,t}, \ \ 
    \xi^{\theta}_{m,0}=c_{1},\label{xim}
    \\
    \quad\dd\delta^{\theta}_{m,t}&=B_{2}(\delta^{\theta}_{m,t})\dd t+{\bf{S}}^{\theta}_{2,m}\dd W_{2,t}, \ \ \delta^{\theta}_{m,0}=c_{2},\label{deltam}
\end{align}
where ${\bf{S}}^{\theta}_{1,m}\in\mathbb{R}^{k_{1}\times r_{1}}$ and ${\bf{S}}^{\theta}_{2,m}\in\mathbb{R}^{p_{1}\times r_{2}}$. The stochastic process $\mathbb{X}^{\theta}_{2,m,t}$ is defined as the following factor model:
\begin{align}
    \mathbb{X}^{\theta}_{2,m,t}={\bf{\Lambda}}^{\theta}_{x_2,m}\eta^{\theta}_{m,t}+\varepsilon^{\theta}_{m,t}\label{Ym}, 
\end{align}
where $\{\mathbb{X}^{\theta}_{2,m,t}\}_{t\geq 0}$ is a $p_2$-dimensional observable vector process, $\{\eta^{\theta}_{m,t}\}_{t\geq 0}$ is a $k_{2}$-dimensional latent common factor vector process, $\{\varepsilon^{\theta}_{m,t}\}_{t\geq 0}$ is a $p_2$-dimensional latent unique factor vector process, and ${\bf{\Lambda}}^{\theta}_{x_2,m}\in\mathbb{R}^{p_2\times k_{2}}$ is a constant loading matrix. Assume that $\{\varepsilon^{\theta}_{m,t}\}_{t\geq 0}$ satisfies the following stochastic differential equation:
\begin{align}
    \quad\dd\varepsilon^{\theta}_{m,t}&=B_{3}(\varepsilon^{\theta}_{m,t})\dd t+{\bf{S}}^{\theta}_{3,m}\dd W_{3,t}, \ \ \varepsilon^{\theta}_{m,0}=c_{3},\label{epsilonm}
\end{align}
where ${\bf{S}}^{\theta}_{3,m}\in\mathbb{R}^{p_{2}\times r_{3}}$. Furthermore, we express the relationship between $\eta^{\theta}_{m,t}$ and  $\xi^{\theta}_{m,t}$ as follows:
\begin{align}
    \eta^{\theta}_{m,t}={\bf{B}}_m^{\theta}\eta^{\theta}_{m,t}+{\bf{\Gamma}}_m^{\theta}\xi^{\theta}_{m,t}+\zeta^{\theta}_{m,t}\label{etam},
\end{align}
where ${\bf{B}}^{\theta}_m\in\mathbb{R}^{k_{2}\times k_{2}}$ is a constant loading matrix, whose diagonal elements are zero, and ${\bf{\Gamma}}^{\theta}_m\in\mathbb{R}^{k_{2}\times k_{1}}$ is a constant loading matrix. $\{\zeta^{\theta}_{m,t}\}_{t\geq 0}$ is a $k_2$-dimensional latent unique factor vector process defined as the following stochastic differential equation:
\begin{align}
    \quad\dd\zeta^{\theta}_{m,t}=B_{4}(\zeta^{\theta}_{m,t})\dd t+{\bf{S}}^{\theta}_{4,m}\dd W_{4,t}, \ \ 
    \zeta^{\theta}_{m,0}=c_{4},\label{zetam}
\end{align}
where ${\bf{S}}^{\theta}_{4,m}\in\mathbb{R}^{k_{2}\times r_{4}}$. Let ${\bf{\Psi}}_0=\mathbb{I}_{k_2}-{\bf{B}}_0$ and ${\bf{\Psi}}^{\theta}_m=\mathbb{I}_{k_2}-{\bf{B}}^{\theta}_m$, where $\mathbb{I}_{k_2}$ denotes the identity matrix of size $k_2$. Set ${\bf{\Sigma}}_{\xi\xi,0}={\bf{S}}_{1,0}{\bf{S}}_{1,0}^{\top}$, ${\bf{\Sigma}}_{\delta\delta,0}={\bf{S}}_{2,0}{\bf{S}}_{2,0}^{\top}$, ${\bf{\Sigma}}_{\varepsilon\varepsilon,0}={\bf{S}}_{3,0}{\bf{S}}_{3,0}^{\top}$, ${\bf{\Sigma}}_{\zeta\zeta,0}={\bf{S}}_{4,0}{\bf{S}}_{4,0}^{\top}$, ${\bf{\Sigma}}^{\theta}_{\xi\xi,m}={\bf{S}}^{\theta}_{1,m}{\bf{S}}^{\theta\top}_{1,m}$, ${\bf{\Sigma}}^{\theta}_{\delta\delta,m}={\bf{S}}^{\theta}_{2,m}{\bf{S}}^{\theta\top}_{2,m}$, ${\bf{\Sigma}}^{\theta}_{\varepsilon\varepsilon,m}={\bf{S}}^{\theta}_{3,m}{\bf{S}}^{\theta\top}_{3,m}$ and ${\bf{\Sigma}}^{\theta}_{\zeta\zeta,m}={\bf{S}}^{\theta}_{4,m}{\bf{S}}^{\theta\top}_{4,m}$.
It is assumed that ${\bf{\Lambda}}_{x_1,0}$ and ${\bf{\Lambda}}^{\theta}_{x_1,m}$ are full column rank matrices, ${\bf{\Psi}}_0$ and ${\bf{\Psi}}_m^{\theta}$ are non-singular, and ${\bf{\Sigma}}_{\delta\delta,0}$, ${\bf{\Sigma}}_{\varepsilon\varepsilon,0}$, ${\bf{\Sigma}}^{\theta}_{\delta\delta}$ and ${\bf{\Sigma}}^{\theta}_{\varepsilon\varepsilon}$ are positive definite matrices. Let $\mathbb{X}^{\theta}_{m,t}=(\mathbb{X}_{1,m,t}^{\theta\top},\mathbb{X}_{2,m,t}^{\theta\top})^{\top}$. The variances of $\mathbb{X}_{t}$ and $\mathbb{X}^{\theta}_{m,t}$ are defined as
\begin{align*}
    \qquad {\bf{\Sigma}}_0=\begin{pmatrix}
    {\bf{\Sigma}}_0^{11} & {\bf{\Sigma}}_0^{12}\\
    {\bf{\Sigma}}_0^{12\top} & {\bf{\Sigma}}_0^{22}
    \end{pmatrix},\qquad 
    {\bf{\Sigma}}_m(\theta_m)=\begin{pmatrix}
    {\bf{\Sigma}}_m^{11}(\theta_m) & {\bf{\Sigma}}_m^{12}(\theta_m)\\
    {\bf{\Sigma}}_m^{12}(\theta_m)^{\top} & {\bf{\Sigma}}_m^{22}(\theta_m)
    \end{pmatrix}
\end{align*}
respectively, where 
\begin{align*} 
    \qquad\qquad{\bf{\Sigma}}^{11}_0&
    ={\bf{\Lambda}}_{x_1,0}{\bf{\Sigma}}_{\xi\xi,0}{\bf{\Lambda}}_{x_1,0}^{\top}
    +{\bf{\Sigma}}_{\delta\delta,0},\\  {\bf{\Sigma}}^{12}_0&={\bf{\Lambda}}_{x_1,0}{\bf{\Sigma}}_{\xi\xi,0}{\bf{\Gamma}}_0^{\top}{\bf{\Psi}}_0^{-1\top}{\bf{\Lambda}}_{x_2,0}^{\top},\\
    {\bf{\Sigma}}^{22}_0&={\bf{\Lambda}}_{x_2,0}{\bf{\Psi}}_0^{-1}({\bf{\Gamma}}_0{\bf{\Sigma}}_{\xi\xi,0}{\bf{\Gamma}}_0^{\top}+{\bf{\Sigma}}_{\zeta\zeta,0}){\bf{\Psi}}_0^{-1\top}{\bf{\Lambda}}_{x_2,0}^{\top}+{\bf{\Sigma}}_{\varepsilon\varepsilon,0}
\end{align*}
and
\begin{align*}
    \qquad\qquad{\bf{\Sigma}}^{11}_m(\theta_m)&
    ={\bf{\Lambda}}^{\theta}_{x_1,m}{\bf{\Sigma}}^{\theta}_{\xi\xi,m}{\bf{\Lambda}}_{x_1,m}^{\theta\top}
    +{\bf{\Sigma}}^{\theta}_{\delta\delta,m},\\  {\bf{\Sigma}}^{12}_m(\theta_m)&={\bf{\Lambda}}^{\theta}_{x_1,m}{\bf{\Sigma}}^{\theta}_{\xi\xi,m}{\bf{\Gamma}}_m^{\theta\top}{\bf{\Psi}}_m^{\theta-1\top}{\bf{\Lambda}}_{x_2,m}^{\theta\top},\\
    {\bf{\Sigma}}^{22}_m(\theta_m)&={\bf{\Lambda}}^{\theta}_{x_2,m}{\bf{\Psi}}_m^{\theta-1}({\bf{\Gamma}}_m^{\theta}{\bf{\Sigma}}^{\theta}_{\xi\xi,m}{\bf{\Gamma}}_m^{\theta\top}+{\bf{\Sigma}}^{\theta}_{\zeta\zeta,m}){\bf{\Psi}}_m^{\theta-1\top}{\bf{\Lambda}}_{x_2,m}^{\theta\top}+{\bf{\Sigma}}^{\theta}_{\varepsilon\varepsilon,m}.
\end{align*}
Furthermore, we suppose that Model $m$ is a correctly specified model. In other words, it is assumed that there exists 
$\theta_{m,0}\in{\rm{Int}}(\Theta_m)$ such that ${\bf{\Sigma}}_0={\bf{\Sigma}}_m(\theta_{m,0})$.

SEM with latent variables is a method for examining the relationships among latent variables that cannot be observed. 
In SEM, statisticians often consider some candidate models.
In this case, it is necessary to select the optimal model among these models. 
In SEM, the Akaike information criterion (AIC) and Bayesian information criterion (BIC) are often used for model selection; see e.g., Huang \cite{Huang AIC(2017)}. These criteria are devised for different purposes. 
AIC selects the model to minimize Kullback-Leibler divergence in terms of prediction. On the other hand, BIC selects the model to maximize posterior probability given the data. As is well known, BIC has the model selection consistency while AIC does not. 

The statistical inference for continuous-time stochastic processes based on discrete observations has been 
investigated for many years,
see  Yoshida \cite{Yoshida(1992)}, Genon-Catalot and Jacod \cite{Genon(1993)}, Bibby and S{\o}rensen \cite{Bibby(1995)},  Kessler \cite{kessler(1997)},  Yoshida \cite{Yoshida(2011)}, Uchida and Yoshida \cite{Uchi-Yoshi(2012)} \cite{Uchi-Yoshi(2014)}, Kamatani and Uchida \cite{Kamatani(2015)} and references therein.
In the field of financial econometrics, many researchers have extensively considered the factor model based on high-frequency data. For example, A{\"i}t-Sahalia and Xiu \cite{Ait(2017)} studied a continuous-time latent factor model for a high-dimensional model using principal component analysis, and Kusano and Uchida \cite{Kusano(2024a)} extended the classical factor analysis to diffusion processes. Using these methods, we can investigate the relationships between observed variables and latent variables based on high-frequency data. 
However, the factor model does not provide the relationships between latent variables. Thus, Kusano and Uchida \cite{Kusano(JJSD)} proposed SEM with latent variables for diffusion processes, and studied the asymptotic properties. The method enables us to study the relationships between latent variables based on high-frequency data. See also Kusano and Uchida \cite{Kusano(2024c)} for details of the SEM. 

Recently, the model selection problem for continuous-time stochastic processes based on high-frequency data have studied; see, e.g., Uchida \cite{Uchida(2010)}, Fuji and Uchida \cite{Fuji(2014)}, Eguchi and Uehara \cite{Eguchi(2021)}, Eguchi and Masuda \cite{Eguchi(2018)} and references therein. Eguchi and Masuda \cite{Eguchi(2018)} proposed the quasi-BIC (QBIC) by using the stochastic expansion of the maginal quasi-likelihood, and applied it to ergodic diffusion processes and continuous semimartingales. Eguchi and Masuda \cite{Eguchi(2018)} also showed that the QBIC has the model selection consistency. Furthermore, Eguchi and Masuda \cite{Eguchi(2023)} studied the Gaussian QBIC for the ergodic  L$\acute{e}$vy driven processes. 

Kusano and Uchida \cite{Kusano(2024c)} studied the quasi-AIC (QAIC) for the SEM with latent variables for diffusion processes, and examined the asymptotic properties. On the other hand, what we propose in this paper is the QBIC for the SEM . It is shown that the QBIC for the SEM has the model selection consistency unlike the QAIC for the SEM.

This paper is organized as follows. In Section $2$, the notation and assumptions are presented. In Section $3$, based on the asymptotic expansion of the marginal quasi-log likelihood, 
we propose two types of quasi-Bayesian information criteria 
of the SEM, and investigate their asymptotic properties. In Section $4$, we provide examples and simulation studies 
to show the finite-sample performance of the proposed information criteria. Section $5$ is devoted to the proofs of the theorems given in Section $3$. 
\section{Notation and assumptions}
First, we prepare the notations and definitions as follows. For any vector $v$, $|v|=\sqrt{\tr{vv^\top}}$, $v^{(i)}$ is the $i$-th element of $v$, and $\Diag v$ is the diagonal matrix, whose $i$-th diagonal element is $v^{(i)}$.
For any matrix $A$, $|A|=\sqrt{\tr{AA^\top}}$, $A^{\otimes 2}=AA^{\top}$, $A_{ij}$ is the $(i,j)$-th element of $A$, and $A^{+}$ represents the Moore-Penrose inverse of $A$. If $A$ is a positive definite matrix, 
we write $A>0$.
For any symmetric matrix $A\in\mathbb{R}^{p\times p}$, $\vec A$, $\vech A$ and $\mathbb{D}_{p}$ stand for the vectorization of $A$, the half-vectorization of $A$ and the $p^2\times\bar{p}$ duplication matrix respectively, where $\bar{p}=p(p+1)/2$; see, e.g., Harville \cite{Harville(1998)}. For matrices $A$ and $B$ of the same size, we define $A[B]=\tr(AB^{\top})$. Let $\partial_{\theta_m}=\partial\slash\partial\theta_m$ and $\partial^2_{\theta_m}=\partial_{\theta_m}\partial_{\theta_m}^{\top}$. Set $ C^{k}_{\uparrow}(\mathbb R^{d})$ as the space of all functions $f$ satisfying the following conditions:
\begin{itemize}
    \item[(i)] $f$ is continuously differentiable with respect to $x\in \mathbb{R}^d$ up to order $k$. 
    \item[(ii)] $f$ and all its derivatives are of polynomial growth in $x\in \mathbb{R}^d$, i.e., 
    $g$ is
of polynomial growth in $x\in \mathbb{R}^d$ if $\displaystyle g(x)=R(1,x)$. 
\end{itemize}
$\mathcal{M}^{+}_p$ is the set of all $p\times p$ real-valued positive definite matrices. 
${\bf{E}}$ denotes the expectation under ${\bf{P}}$. $\stackrel{p}{\longrightarrow}$ and $\stackrel{d}{\longrightarrow}$ denote the convergence in probability and the convergence in distribution, respectively. Define
\begin{align*}
    \mathbb{Q}_{\mathbb{XX}}=\frac{1}{T}\sum_{i=1}^n(\mathbb{X}_{t_i^n}-\mathbb{X}_{t_{i-1}^n})(\mathbb{X}_{t_i^n}-\mathbb{X}_{t_{i-1}^n})^{\top}.
\end{align*}
Set the prior density of $\theta_m$ as $\pi_{m,n}(\theta_m)$. Let 
\begin{align*}
    \mathbb{U}_{m,n}(\theta_{m,0})=\Bigl\{u_m\in\mathbb{R}^{q_m}:\theta_{m,0}+n^{-\frac{1}{2}}u_m\in\Theta_m\Bigr\}.
\end{align*}

Next, we make the following assumptions.
\begin{enumerate}
    \vspace{2mm}
    \item[\bf{[A]}]
    \begin{enumerate}
    \item
    \begin{enumerate}
    \item[(i)] There exists a constant $C>0$ such that
    \begin{align*}
    |B_{1}(x)-B_{1}(y)|\leq C|x-y|
    \end{align*}
    for any $x,y\in\mathbb R^{k_{1}}$.
    \vspace{2mm}
    \item[(ii)] For all $\ell>0$,
    $\displaystyle\sup_t{\bf{E}}\Bigl[\bigl|\xi_{0,t}\bigr|^{\ell}\Bigr]<\infty$.
    \vspace{2mm}
    \item[(iii)] $B_{1}\in C^{4}_{\uparrow}(\mathbb R^{k_{1}})$.
    \end{enumerate} 
    \vspace{2mm}
    \item
    \begin{enumerate}
    \item[(i)] There exists a constant $C>0$ such that
    \begin{align*}
    |B_{2}(x)-B_{2}(y)|\leq C|x-y|
    \end{align*}
    for any $x,y\in\mathbb R^{p_1}$.
    \vspace{2mm}
    \item[(ii)] For all $\ell\geq 0$, 
    $\displaystyle\sup_t{\bf{E}}\Bigl[\bigl|\delta_{0,t}\bigr|^{\ell}\Bigr]<\infty$.
    \vspace{2mm}
    \item[(iii)] $B_{2}\in C^{4}_{\uparrow}(\mathbb R^{p_1})$.
    \end{enumerate}
    \vspace{2mm}
    \item
    \begin{enumerate}
    \item[(i)] There exists a constant $C>0$ such that 
    \begin{align*}
    |B_{3}(x)-B_{3}(y)|\leq C|x-y|
    \end{align*}
    for any $x,y\in\mathbb R^{p_2}$.
    \vspace{2mm}
    \item[(ii)] For all $\ell\geq 0$, 
    $\displaystyle\sup_t{\bf{E}}\Bigl[\bigl|\varepsilon_{0,t}\bigr|^{\ell}\Bigr]<\infty$.
    \vspace{2mm}
    \item[(iii)] $B_{3}\in C^{4}_{\uparrow}(\mathbb R^{p_2})$.
    \end{enumerate}
    \vspace{2mm}
    \item
    \begin{enumerate}
    \item[(i)] There exists a constant $C>0$ such that
    \begin{align*}
    |B_{4}(x)-B_{4}(y)|\leq C|x-y|
    \end{align*}
    for any $x,y\in\mathbb R^{k_{2}}$.
    \vspace{2mm}
    \item[(ii)] For all $\ell\geq 0$, $\displaystyle\sup_t{\bf{E}}\Bigl[\bigl|\zeta_{0,t}\bigr|^{\ell}\Bigr]<\infty$.
    \vspace{2mm}
    \item[(iii)] $B_{4}\in C^{4}_{\uparrow}(\mathbb R^{k_{2}})$.
    \end{enumerate}
    \end{enumerate}
    \vspace{2mm}
    \item[\bf{[B]}]
    \begin{enumerate}
        \item For $m=1,\cdots,M$, 
        \begin{align*}
            \inf_{n\in\mathbb{N}}\pi_{m,n}(\theta_{m,0})>0,\quad 
            \sup_{\theta_m\in\Theta_m}
            \sup_{n\in\mathbb{N}}\pi_{m,n}(\theta_m)<\infty.
        \end{align*}
        \item For $m=1,\cdots,M$,
        \begin{align*}
            \sup_{u_m\in\{|u_m|<L\}\cap\mathbb{U}_{m,n}(\theta_{m,0})}\Bigl|\pi_{m,n}\bigl(\theta_{m,0}+n^{-\frac{1}{2}}u_m\bigr)-\pi_{m,n}(\theta_{m,0})\Bigr|\longrightarrow 0
        \end{align*}
    as $n\longrightarrow\infty$ for each $L>0$.
    \end{enumerate}
\end{enumerate}
\begin{remark}
{\bf{[A]}} is a standard assumption for diffusion processes; see, e.g., Kessler \cite{kessler(1997)}. {\bf{[B]}} is the condition of the prior distribution $\pi_{m,n}$. For details of {\bf{[B]}}, see Eguchi and Masuda \cite{Eguchi(2018)}. 
\end{remark}
\section{Main results}
Based on the discrete observations $\mathbb{X}_n$, we define the quasi-likelihood $\mathbb{L}_{m,n}$ as
{\setlength{\abovedisplayskip}{8pt}
\setlength{\belowdisplayskip}{8pt}
\begin{align*}
    \mathbb{L}_{m,n}(\theta_m)=\prod_{i=1}^n\frac{1}{(2\pi h_n)^{\frac{p}{2}}\det{{\bf{\Sigma}}_m
    (\theta_m)^{\frac{1}{2}}}}\exp\biggl(-\frac{1}{2h_n}{\bf{\Sigma}}_m(\theta_m)
    ^{-1}\Bigl[\bigl(\Delta_i\mathbb{X}\bigr)^{\otimes 2}\Bigr]\biggr),
\end{align*}}
where $\Delta_i\mathbb{X}=\mathbb{X}_{t_i^n}-\mathbb{X}_{t_{i-1}^n}$.
Set the quasi-maximum likelihood estimator $\hat{\theta}_{m,n}$ as
\begin{align*}
    \mathbb{L}_{m,n}(\hat{\theta}_{m,n})=\sup_{\theta_m\in\Theta_m}
    \mathbb{L}_{m,n}(\theta_m).
\end{align*}
Furthermore, the marginal quasi-likelihood is defined as
\begin{align*}
    h_{m,n}=\int_{\Theta_m}\exp{\bigl\{\mathbb{H}_{m,n}(\theta_m)\bigr\}}\pi_{m,n}(\theta_m)d\theta_m,
\end{align*}
where
\begin{align*}
    \mathbb{H}_{m,n}(\theta_m)&=\log (2\pi h_n)^{\frac{np}{2}}\mathbb{L}_{m,n}(\theta_m).
\end{align*}
Set the relative likeliness of Model $m$ occurrence as $\mathfrak{p}_m>0$, where $\sum_{i=1}^M\mathfrak{p}_i=1$.
The posterior model selection probability of Model $m$ is given as
\begin{align}
    \frac{\mathfrak{p}_mh_{m,n}}{\sum_{i=1}^M\mathfrak{p}_ih_{i,n}}
    \label{post}
\end{align}
for $m=1,\cdots,M$. Our purpose is to know the model which maximizes the posterior model selection probability (\ref{post}). As $\mathfrak{p}_1=\mathfrak{p}_2=\cdots=\mathfrak{p}_M$, the model which maximizes (\ref{post}) is equivalent to the model which maximizes $h_{m,n}$. Consequently, it is sufficient to estimate $h_{m,n}$. However, $h_{m,n}$ is very intractable. Thus, we consider the asymptotic expansion of $h_{m,n}$. Let
\begin{align*}
    \Delta_{m,0}=\left.\frac{\partial}{\partial\theta_m^{\top}}\vech {\bf{\Sigma}}_m(\theta_m)\right|_{\theta_m=\theta_{m,0}}
\end{align*}
and
\begin{align*}
    \mathbb{Y}_m(\theta_m)=-2^{-1}
    \Bigl({\bf{\Sigma}}_m(\theta_m)^{-1}-{\bf{\Sigma}}_m(\theta_{m,0})^{-1}\Bigr)\Bigl[{\bf{\Sigma}}_m(\theta_{m,0})\Bigr]-2^{-1}
    \log\frac{\det{\bf{\Sigma}}_m(\theta_m)}{\det{\bf{\Sigma}}_m(\theta_{m,0})}.
\end{align*}
Furthermore, we make the assumptions as follows.
\begin{enumerate}
    \vspace{2mm}
    \item[\bf{[C1]}]
    \begin{enumerate}
        \item There exists a constant $\chi>0$ such that
        \begin{align*}
         \mathbb{Y}_m(\theta_m)\leq -\chi\bigl|\theta_m-\theta_{m,0}\bigr|^2
        \end{align*}
        for all $\theta_m\in\Theta_m$.
        \vspace{2mm}
        \item $\rank \Delta_{m,0}=q_m$.
    \end{enumerate}
\end{enumerate}
\begin{remark}
    {\bf{[C1]}} (a) is the identifiability condition, which implies the consistency of $\hat{\theta}_{m,n}$. For the proof, see Lemma $2$ in Kusano and Uchida \cite{Kusano(2024c)}. {\bf{[C1]}} (b) 
    implies that the asymptotic variance of $\hat{\theta}_{m,n}$ is a positive definite matrix; see Lemma 35 in Kusano and Uchida \cite{Kusano(2023)}.
\end{remark}
Set 
\begin{align*}
    J_{m,n}=\Bigl\{-n^{-1}\partial^2_{\theta_m}\mathbb{H}_{m,n}(\hat{\theta}_{m,n})>0\Bigr\}.
\end{align*}
Note that ${\bf{P}}(J_{m,n})\longrightarrow 1$ as $n\longrightarrow\infty$ under ${[\bf{A}]}$ and ${[\bf{C1}]}$ since it holds from the proof of Lemma \ref{thetaasym} that $-n^{-1}\partial^2_{\theta_m}\mathbb{H}_{m,n}(\hat{\theta}_{m,n})$ converges in probability to a positive definite matrix. We have the asymptotic expansion of the marginal quasi-log likelihood $h_{m,n}$ on $J_{m,n}$ as follows.
\begin{theorem}\label{Hexpansion}
Suppose that
\begin{align}
    \log \pi_{m,n}(\hat{\theta}_{m,n})=\log\pi_{m,n}(\theta_{m,0})+o_p(1). \label{pithetahat}
\end{align}
Under $\bf{[A]}$, $\bf{[B]}$ and $\bf{[C1]}$, as $n\longrightarrow\infty$,
{\setlength{\abovedisplayskip}{8pt}
\setlength{\belowdisplayskip}{8pt}
\begin{align*}
    \begin{split}
    &\quad\ \log\left(\int_{\Theta_m}\exp{\bigl\{\mathbb{H}_{m,n}(\theta_m)\bigr\}}\pi_{m,n}(\theta_m)d\theta_m\right)\\
    &=\mathbb{H}_{m,n}(\hat{\theta}_{m,n})+2^{-1}q_m\log 2\pi-2^{-1}\log\det{\Bigl\{-\partial^2_{\theta_m}\mathbb{H}_{m,n}(\hat{\theta}_{m,n})\Bigr\}}+\log\pi_{m,n}(\hat{\theta}_{m,n})+o_p(1)
    \end{split}
\end{align*}}
on $J_{m,n}$.
\end{theorem}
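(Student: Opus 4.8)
The plan is to treat the claim as a rigorous Laplace approximation of the marginal quasi-likelihood, expanded around the quasi-maximum likelihood estimator $\hat{\theta}_{m,n}$. Throughout I work on $J_{m,n}$, on which $G_{m,n}:=-\partial^2_{\theta_m}\mathbb{H}_{m,n}(\hat{\theta}_{m,n})$ is positive definite, and I use that ${\bf P}(J_{m,n})\to1$ together with the facts, taken from the proof of Lemma \ref{thetaasym}, that $\hat{\theta}_{m,n}\stackrel{p}{\longrightarrow}\theta_{m,0}\in\Int(\Theta_m)$, that $\hat{\theta}_{m,n}$ is $\sqrt{n}$-consistent, and that $\hat{\Gamma}_{m,n}:=n^{-1}G_{m,n}$ converges in probability to a positive definite matrix. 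First I would factor out the value at the maximiser:
\begin{align*}
\log\!\left(\int_{\Theta_m}\exp\{\mathbb{H}_{m,n}(\theta_m)\}\pi_{m,n}(\theta_m)\,d\theta_m\right)&=\mathbb{H}_{m,n}(\hat{\theta}_{m,n})+\log I_{m,n},\\
I_{m,n}&:=\int_{\Theta_m}\exp\bigl\{\mathbb{H}_{m,n}(\theta_m)-\mathbb{H}_{m,n}(\hat{\theta}_{m,n})\bigr\}\pi_{m,n}(\theta_m)\,d\theta_m,
\end{align*}
so it remains to prove $\log I_{m,n}=2^{-1}q_m\log2\pi-2^{-1}\log\det G_{m,n}+\log\pi_{m,n}(\hat{\theta}_{m,n})+o_p(1)$.

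Next I would localise and expand. Substituting $u_m=\sqrt{n}(\theta_m-\hat{\theta}_{m,n})$ introduces the Jacobian factor $n^{-q_m/2}$, which is precisely what later turns $-2^{-1}\log\det\hat{\Gamma}_{m,n}$ into $-2^{-1}\log\det G_{m,n}$. Since $\hat{\theta}_{m,n}\in\Int(\Theta_m)$ with probability tending to one, the score vanishes, $\partial_{\theta_m}\mathbb{H}_{m,n}(\hat{\theta}_{m,n})=0$, so a second-order Taylor expansion gives
\[
\mathbb{H}_{m,n}(\hat{\theta}_{m,n}+n^{-1/2}u_m)-\mathbb{H}_{m,n}(\hat{\theta}_{m,n})=-2^{-1}u_m^{\top}\hat{\Gamma}_{m,n}u_m+R_{m,n}(u_m),
\]
where, by the uniform convergence on compacts of the normalised second and third derivatives of $\mathbb{H}_{m,n}$, the remainder satisfies $\sup_{|u_m|\le L}|R_{m,n}(u_m)|\stackrel{p}{\longrightarrow}0$ for each fixed $L$. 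The prior is controlled by combining $\bf{[B]}$(b) with the $\sqrt{n}$-consistency of $\hat{\theta}_{m,n}$ and hypothesis (\ref{pithetahat}): because $\{|\theta_m-\hat{\theta}_{m,n}|\le n^{-1/2}L\}\subset\{|\theta_m-\theta_{m,0}|\le n^{-1/2}L'\}$ with $L'=L+O_p(1)$, one gets $\pi_{m,n}(\hat{\theta}_{m,n}+n^{-1/2}u_m)=\pi_{m,n}(\hat{\theta}_{m,n})+o_p(1)$ uniformly on $|u_m|\le L$. Hence the localised integral equals $\pi_{m,n}(\hat{\theta}_{m,n})\int_{|u_m|\le L}\exp\{-2^{-1}u_m^{\top}\hat{\Gamma}_{m,n}u_m\}\,du_m+o_p(1)$, and since $\hat{\Gamma}_{m,n}$ lies in a compact set of positive definite matrices with high probability, letting $L\to\infty$ drives this to the Gaussian value $\pi_{m,n}(\hat{\theta}_{m,n})(2\pi)^{q_m/2}(\det\hat{\Gamma}_{m,n})^{-1/2}$.

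The main obstacle is to show that the complementary region $\{|\theta_m-\hat{\theta}_{m,n}|>n^{-1/2}L\}$ contributes $o_p(n^{-q_m/2})$ to $I_{m,n}$ as $n\to\infty$ and then $L\to\infty$; this is what upgrades the heuristic expansion to a genuine statement over the compact set $\Theta_m$. I would split this region in two. On the far part $\{|\theta_m-\theta_{m,0}|>\varepsilon\}$ the identifiability condition $\bf{[C1]}$(a) applies: since $n^{-1}(\mathbb{H}_{m,n}(\theta_m)-\mathbb{H}_{m,n}(\theta_{m,0}))$ converges uniformly to $\mathbb{Y}_m(\theta_m)\le-\chi|\theta_m-\theta_{m,0}|^2$, the exponent is $\le-nc$ there with probability tending to one, so by $\bf{[B]}$(a) this part is at most $|\Theta_m|\,(\sup_{\theta_m}\pi_{m,n}(\theta_m))\,e^{-nc}=o_p(n^{-q_m/2})$. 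On the intermediate annulus $\{n^{-1/2}L<|\theta_m-\hat{\theta}_{m,n}|\le\varepsilon\}$ equicontinuity of the normalised Hessian yields a uniform quadratic bound $\mathbb{H}_{m,n}(\theta_m)-\mathbb{H}_{m,n}(\hat{\theta}_{m,n})\le-2^{-1}n\kappa|\theta_m-\hat{\theta}_{m,n}|^2$ for some $\kappa>0$, reducing the rescaled contribution to a Gaussian tail $\int_{|u_m|>L}\exp\{-2^{-1}\kappa|u_m|^2\}\,du_m$ that vanishes as $L\to\infty$. The real work lies in these two bounds, in particular the uniform, moment-controlled convergence of the normalised quasi-log likelihood and its first three derivatives on which the remainder estimate and the quadratic bound rest, and I would import these from the polynomial-type moment estimates developed in the proof of Lemma \ref{thetaasym} and in Kusano and Uchida \cite{Kusano(2024c),Kusano(2023)}. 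Combining the local Gaussian contribution with the negligible tail gives $I_{m,n}=n^{-q_m/2}\{(2\pi)^{q_m/2}(\det\hat{\Gamma}_{m,n})^{-1/2}\pi_{m,n}(\hat{\theta}_{m,n})+o_p(1)\}$, and taking logarithms together with $-2^{-1}q_m\log n-2^{-1}\log\det\hat{\Gamma}_{m,n}=-2^{-1}\log\det G_{m,n}$ yields the asserted expansion on $J_{m,n}$.
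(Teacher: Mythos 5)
Your proposal is correct in substance, but it takes a different route from the paper: you carry out the Laplace approximation from scratch, centred at $\hat{\theta}_{m,n}$, whereas the paper proves the theorem by verifying the hypotheses of a general result for LAQ models (Theorem 3.7 of Eguchi and Masuda \cite{Eguchi(2018)}) with $A_n(\theta_0)=n^{-1/2}$. Concretely, the paper's proof consists of checking that Lemma \ref{A1} (boundedness of $n^{-1/2}\mathbb{H}_n(\theta_0)$, convergence of $-n^{-1}\partial^2_\theta\mathbb{H}_n(\theta_0)$ to ${\bf{\Gamma}}_0>0$, and uniform $o_p(1)$ third derivatives), assumption {\bf{[B]}}, Lemma \ref{A3} (smallness of $\int_{|u|\ge r}\mathbb{Z}_n(u)\,du$ uniformly in $n$), and the consistency of $\hat{\theta}_{m,n}$ supply Assumptions 3.1--3.3 of that reference; the only detailed new argument in the paper is Lemma \ref{A3}, which is derived from the polynomial-type large deviation inequality of Lemma \ref{Zine} via a decomposition into shells $\{r+k\le|u|\le r+k+1\}$. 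Your treatment of the tail is the one genuinely different ingredient: instead of the quantitative, uniform-in-$n$ bound $\sup_n{\bf{P}}(\sup_{|u|\ge r}\mathbb{Z}_n(u)\ge e^{-r})\le C_L r^{-L}$, you split into a far region controlled by the identifiability condition {\bf{[C1]}}(a) together with the uniform law of large numbers (Lemma \ref{Huni}), and an intermediate annulus controlled by a uniform negative-definite Hessian bound. This gives only ``with probability tending to one'' statements for each fixed $L$, so you must be careful with the order of limits ($L\to\infty$ after $n\to\infty$); this is doable for an $o_p(1)$ conclusion but is exactly the point the paper's uniform-in-$n$ Lemma \ref{A3} is designed to make clean. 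What your approach buys is a self-contained proof that does not lean on the external theorem; what the paper's approach buys is that all the delicate tail control is concentrated in one reusable lemma whose proof rests on the already-established large deviation inequality. One small point to make explicit in your write-up: the vanishing of the score at $\hat{\theta}_{m,n}$ and the identity $\mathbb{H}_{m,0}(\theta_m)-\mathbb{H}_{m,0}(\theta_{m,0})=\mathbb{Y}_m(\theta_m)$ both require $\theta_{m,0}\in{\rm{Int}}(\Theta_m)$ and the correct specification of Model $m$, which hold by the standing assumptions.
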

Note that it follows from Theorem \ref{Hexpansion} that 
{\setlength{\abovedisplayskip}{8pt}
\setlength{\belowdisplayskip}{8pt}
\begin{align*}
    \begin{split}
    &\quad\ -2\log\left(\int_{\Theta_m}\exp{\bigl\{\mathbb{H}_{m,n}(\theta_m)\bigr\}}\pi_{m,n}(\theta_m)d\theta_m\right)\\
    &=-2\mathbb{H}_{m,n}(\hat{\theta}_{m,n})-q_m\log 2\pi+\log\det{\Bigl\{-\partial^2_{\theta_m}\mathbb{H}_{m,n}(\hat{\theta}_{m,n})\Bigr\}}-2\log\pi_{m,n}(\hat{\theta}_{m,n})+o_p(1)
    \end{split}
\end{align*}
on $J_{m,n}$ as $n\longrightarrow\infty$.} Since it holds from {[\bf{B}]} ({\rm{a}}) that
\begin{align*}
    \log\pi_{m,n}(\theta_{m,0})=O_p(1)
\end{align*}
as $n\longrightarrow\infty$, we see from (\ref{pithetahat}) that
\begin{align*}
    -q_m\log 2\pi-2\log\pi_{m,n}(\hat{\theta}_{m,n})+o_p(1)=O_p(1).
\end{align*}
Hence, ${\rm{QBIC}}_{1,n}$ is set as
\begin{align*}
    {\rm{QBIC}}^{(m)}_{1,n}=
    -2\mathbb{H}_{m,n}(\hat{\theta}_{m,n})+\log\det{n\tilde{\bf{\Gamma}}_{m,n}(\hat{\theta}_{m,n})},
\end{align*}
where
{\setlength{\abovedisplayskip}{8pt}
\setlength{\belowdisplayskip}{8pt}
\begin{align*}
    \tilde{\bf{\Gamma}}_{m,n}(\hat{\theta}_{m,n})=\left\{
    \begin{array}{ll}
    -n^{-1}\partial^2_{\theta_m}\mathbb{H}_{m,n}(\hat{\theta}_{m,n}), & (\mbox{on}\ J_{m,n}),\\
    \mathbb{I}_{q_m}, & (\mbox{on}\ J^c_{m,n}).
    \end{array}
    \right.
\end{align*}
Moreover, it follows from Lemma 6 that}
\begin{align*}
    \log\det{n\tilde{\bf{\Gamma}}_{m,n}(\hat{\theta}_{m,n})}&=q_m\log n+\log\det{\tilde{\bf{\Gamma}}_{m,n}(\hat{\theta}_{m,n})}\\
    &=q_m\log n+O_p(1)
\end{align*}
as $n\longrightarrow\infty$. Therefore, ${\rm{QBIC}}_{2,n}$ is defined as
\begin{align*}
    {\rm{QBIC}}^{(m)}_{2,n}=-2\mathbb{H}_{m,n}(\hat{\theta}_{m,n})+
    q_m\log n.
\end{align*}
Note that ${\rm{QBIC}}^{(m)}_{1,n}={\rm{QBIC}}^{(m)}_{2,n}$ on $J_{m,n}^c$.

Next, we consider the situation where the set of competing models includes some (not all) misspecified parametric models. 
Set 
{\setlength{\abovedisplayskip}{8pt}
\setlength{\belowdisplayskip}{8pt}
\begin{align*}
    \mathfrak{M}=\biggl\{m\in\{1,\cdots,M\}\ \Big|\ \mbox{There exists}\ \theta_{m,0}\in\Theta_{m}\ \mbox{such that}\ {\bf{\Sigma}}_0={\bf{\Sigma}}_{m}(\theta_{m,0}).\biggr\}
\end{align*}
and $\mathfrak{M}^{c}=\{1,\cdots,M\}\backslash \mathfrak{M}$.} Note that $\mathfrak{M}$ is the set of correctly specified models. The optimal model $m^*$ is defined as follows:
\begin{align*}
    \{m^*\}={\rm{argmin}}_{m\in\mathfrak{M}}\ {\rm{dim}}(\Theta_m).
\end{align*}
Note that we assume that the optimal model is unique. The optimal parameter $\bar{\theta}_{m}$ is set as
\begin{align*}
    \mathbb{H}_{m,0}(\bar{\theta}_{m})=\sup_{\theta_m\in\Theta_m}\mathbb{H}_{m,0}(\theta_{m}),
\end{align*}
where
\begin{align*}
    \mathbb{H}_{m,0}(\theta_m)
    &=-2^{-1}{\bf{\Sigma}}_m
    (\theta_m)^{-1}\bigl[{\bf{\Sigma}}_0\bigr]-2^{-1}
    \log\det{\bf{\Sigma}}_m(\theta_m).
\end{align*}
For $m\in\mathfrak{M}$, we note that $\bar{\theta}_m=\theta_{m,0}$. The following assumption is made.
\begin{enumerate}
    \vspace{3mm}
    \item[{\bf{[C2]}}] 
    \begin{enumerate}
        \item $\mathbb{H}_{m,0}(\theta_m)=\mathbb{H}_{m,0}(\bar{\theta}_{m})\Longrightarrow \
        \theta_m=\bar{\theta}_{m}$.
        \vspace{2mm}
        \item $-\partial^2_{\theta_m}\mathbb{H}_{m,0}(\bar{\theta}_m)>0$.
    \end{enumerate}
    \vspace{3mm}
\end{enumerate}
Furthermore, we define that Model $i$ is nested in Model $j$ when $q_i<q_j$ and there exist a matrix $F\in\mathbb{R}^{q_j\times q_i}$ with $F^{\top}F=\mathbb{I}_{q_i}$ and a constant $c\in\mathbb{R}^{q_j}$ such that $\mathbb{H}_{i,n}(\theta_{i})=\mathbb{H}_{j,n}(F\theta_{i}+c)$ for all $\theta_i\in\Theta_i$. By the following theorem, we obtain the model selection consistency of ${\rm{QBIC}}_{1,n}$ and ${\rm{QBIC}}_{2,n}$.
\begin{theorem}\label{BICcons1} Under {\bf{[A]}}, {\bf{[B]}} and {\bf{[C1]}}, as Model $m^*$ is nested in Model $m\in\mathfrak{M}\backslash\{m^*\}$, 
\begin{align*}
    {\bf{P}}\Bigl({\rm{QBIC}}^{(m^*)}_{1,n}<{\rm{QBIC}}^{(m)}_{1,n}\Bigr)
    &\longrightarrow 1,\\
    {\bf{P}}\Bigl({\rm{QBIC}}^{(m^*)}_{2,n}<{\rm{QBIC}}^{(m)}_{2,n}\Bigr)
    &\longrightarrow 1
\end{align*}
as $n\longrightarrow\infty$.
\end{theorem}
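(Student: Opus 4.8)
The plan is to reduce each QBIC difference to the schematic form $O_p(1)-(q_m-q_{m^*})\log n$, which tends to $-\infty$ in probability because the nesting hypothesis forces $q_{m^*}<q_m$. The substantive point is that the two maximized marginal quasi-log likelihoods, for the correctly specified Models $m^*$ and $m$, differ by only $O_p(1)$.

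First I would record the algebraic fact that $\mathbb{H}_{m,n}(\theta_m)$ depends on $\theta_m$ solely through ${\bf{\Sigma}}_m(\theta_m)$: from the definition of $\mathbb{L}_{m,n}$ one has
\[
\mathbb{H}_{m,n}(\theta_m)=-2^{-1}n\log\det{\bf{\Sigma}}_m(\theta_m)-(2h_n)^{-1}\sum_{i=1}^n{\bf{\Sigma}}_m(\theta_m)^{-1}\bigl[(\Delta_i\mathbb{X})^{\otimes 2}\bigr].
\]
Because $m^*,m\in\mathfrak{M}$, we have ${\bf{\Sigma}}_{m^*}(\theta_{m^*,0})={\bf{\Sigma}}_0={\bf{\Sigma}}_m(\theta_{m,0})$, and therefore the values at the true parameters coincide exactly, $\mathbb{H}_{m^*,n}(\theta_{m^*,0})=\mathbb{H}_{m,n}(\theta_{m,0})$. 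The nesting hypothesis itself is used only to supply the two facts that both models are correctly specified and that $q_{m^*}<q_m$.

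Next I would show that for each correctly specified model $m$ one has $\mathbb{H}_{m,n}(\hat{\theta}_{m,n})-\mathbb{H}_{m,n}(\theta_{m,0})=O_p(1)$. By {\bf{[C1]}}~(a) and the consistency of $\hat{\theta}_{m,n}$ (Lemma~2 in Kusano and Uchida \cite{Kusano(2024c)}), $\hat{\theta}_{m,n}$ is eventually interior, so the score vanishes, $\partial_{\theta_m}\mathbb{H}_{m,n}(\hat{\theta}_{m,n})=0$. A second-order Taylor expansion about $\theta_{m,0}$ then yields, on $J_{m,n}$,
\[
\mathbb{H}_{m,n}(\hat{\theta}_{m,n})-\mathbb{H}_{m,n}(\theta_{m,0})=2^{-1}\bigl[\sqrt{n}(\hat{\theta}_{m,n}-\theta_{m,0})\bigr]^{\top}\tilde{\bf{\Gamma}}_{m,n}(\hat{\theta}_{m,n})\bigl[\sqrt{n}(\hat{\theta}_{m,n}-\theta_{m,0})\bigr]+o_p(1),
\]
whose right-hand side is $O_p(1)$ by the asymptotic normality $\sqrt{n}(\hat{\theta}_{m,n}-\theta_{m,0})=O_p(1)$ together with the convergence of $\tilde{\bf{\Gamma}}_{m,n}(\hat{\theta}_{m,n})$ to a positive definite limit, both furnished by Lemma \ref{thetaasym}. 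Combining this with the equality of the values at the truth gives $2\bigl[\mathbb{H}_{m,n}(\hat{\theta}_{m,n})-\mathbb{H}_{m^*,n}(\hat{\theta}_{m^*,n})\bigr]=O_p(1)$.

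Finally I would assemble the criteria. For ${\rm{QBIC}}_{2,n}$ one has directly
\[
{\rm{QBIC}}^{(m^*)}_{2,n}-{\rm{QBIC}}^{(m)}_{2,n}=2\bigl[\mathbb{H}_{m,n}(\hat{\theta}_{m,n})-\mathbb{H}_{m^*,n}(\hat{\theta}_{m^*,n})\bigr]-(q_m-q_{m^*})\log n=O_p(1)-(q_m-q_{m^*})\log n,
\]
which tends to $-\infty$ in probability since $q_m>q_{m^*}$, so the stated probability converges to $1$. For ${\rm{QBIC}}_{1,n}$ I would invoke the expansion $\log\det n\tilde{\bf{\Gamma}}_{m,n}(\hat{\theta}_{m,n})=q_m\log n+O_p(1)$ (Lemma~6) for both indices, so that the penalty difference is again $(q_{m^*}-q_m)\log n+O_p(1)$ and the same divergence argument applies on $J_{m,n}\cap J_{m^*,n}$, an event of probability tending to $1$; on its complement the two criteria coincide. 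The only nonroutine ingredient is the $O_p(1)$ control of $\mathbb{H}_{m,n}(\hat{\theta}_{m,n})-\mathbb{H}_{m,n}(\theta_{m,0})$, which rests on the asymptotic normality of the quasi-maximum likelihood estimator and the convergence of the normalized Hessian established earlier; everything thereafter is bookkeeping with the penalty terms.
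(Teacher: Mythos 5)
Your proof is correct, but it takes a genuinely different route from the paper's. The paper uses the nesting hypothesis structurally: it writes $\mathbb{H}_{m^*,n}(\hat{\theta}_{m^*,n})=\mathbb{H}_{m,n}(f_{m^*}(\hat{\theta}_{m^*,n}))$ with $f_{m^*}(\theta_{m^*})=F\theta_{m^*}+c$, Taylor-expands $\mathbb{H}_{m,n}$ \emph{around $\hat{\theta}_{m,n}$} evaluated at $f_{m^*}(\hat{\theta}_{m^*,n})$ on the event $A_{m,n}=\{\hat{\theta}_{m,n}\in\mathrm{Int}(\Theta_m)\}$, controls $\sqrt{n}\bigl(f_{m^*}(\hat{\theta}_{m^*,n})-\hat{\theta}_{m,n}\bigr)=O_p(1)$ via Lemma \ref{thetaasym} and the identity $f_{m^*}(\theta_{m^*,0})=\theta_{m,0}$, and shows the remainder matrix $\mathrm{R}_n$ converges to $-2^{-1}{\bf{\Gamma}}_{m,0}$ using the third-derivative bound of Lemma \ref{A1}. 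You instead anchor each maximized quasi-log likelihood at its own true parameter, exploiting the exact algebraic coincidence $\mathbb{H}_{m^*,n}(\theta_{m^*,0})=\mathbb{H}_{m,n}(\theta_{m,0})$ (valid because $\mathbb{H}_{m,n}$ depends on $\theta_m$ only through ${\bf{\Sigma}}_m(\theta_m)$ and both models reproduce ${\bf{\Sigma}}_0$), and then bound each of $\mathbb{H}_{m,n}(\hat{\theta}_{m,n})-\mathbb{H}_{m,n}(\theta_{m,0})$ by a second-order expansion at the maximizer; the ingredients ($\sqrt{n}$-tightness of the estimators, uniform convergence of the normalized Hessian, vanishing of the score on an event of probability tending to one) are exactly those supplied by Lemmas \ref{A1} and \ref{thetaasym}, so nothing new is needed. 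What your decomposition buys is that the nesting hypothesis enters only through $q_{m^*}<q_m$; the argument works verbatim for any pair of correctly specified models with that dimension inequality, whereas the paper's expansion is tied to the linear embedding $f_{m^*}$. One cosmetic caveat: your quadratic form should really carry the normalized Hessian at an intermediate point rather than $\tilde{\bf{\Gamma}}_{m,n}(\hat{\theta}_{m,n})$, but the discrepancy is $o_p(1)$ by the uniform convergence (\ref{H2uni}) together with consistency, as you indicate, so this does not affect the conclusion.
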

Moreover, we have the following asymptotic result.
\begin{theorem}\label{BICcons2} Under {\bf{[A]}}, {\bf{[B]}} and {\bf{[C2]}}, as $n\longrightarrow\infty$,
\begin{align*}
    {\bf{P}}\Bigl({\rm{QBIC}}^{(m^*)}_{1,n}<{\rm{QBIC}}^{(m)}_{1,n}\Bigr)
    &\longrightarrow 1,\\
    {\bf{P}}\Bigl({\rm{QBIC}}^{(m^*)}_{2,n}<{\rm{QBIC}}^{(m)}_{2,n}\Bigr)
    &\longrightarrow 1
\end{align*}
for $m\in\mathfrak{M}^c$.
\end{theorem}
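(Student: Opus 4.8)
The plan is to prove both statements simultaneously by normalizing each QBIC difference by $n$ and showing that the normalized difference converges in probability to a strictly negative constant for every misspecified $m\in\mathfrak{M}^c$; the $\log n$ penalty and the Hessian correction then turn out to be of lower order and do not affect the sign in the limit. Throughout I would exploit that $\mathrm{QBIC}^{(m)}_{1,n}=\mathrm{QBIC}^{(m)}_{2,n}+\log\det\tilde{\bf{\Gamma}}_{m,n}(\hat{\theta}_{m,n})$, so that the two criteria agree up to an $O_p(1)$ term once the Hessian factor is controlled.

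First I would record the deterministic limit of the normalized contrast. A direct computation using $\sum_{i=1}^n(\Delta_i\mathbb{X})^{\otimes 2}=T\mathbb{Q}_{\mathbb{XX}}$ and $T/h_n=n$ gives
\begin{align*}
    n^{-1}\mathbb{H}_{m,n}(\theta_m)=-2^{-1}\log\det{\bf{\Sigma}}_m(\theta_m)-2^{-1}{\bf{\Sigma}}_m(\theta_m)^{-1}\bigl[\mathbb{Q}_{\mathbb{XX}}\bigr].
\end{align*}
Since $\mathbb{Q}_{\mathbb{XX}}\stackrel{p}{\longrightarrow}{\bf{\Sigma}}_0$ under $\bf{[A]}$, one obtains the uniform convergence $\sup_{\theta_m\in\Theta_m}\bigl|n^{-1}\mathbb{H}_{m,n}(\theta_m)-\mathbb{H}_{m,0}(\theta_m)\bigr|\stackrel{p}{\longrightarrow}0$. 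Combined with the identifiability $\bf{[C2]}$ (a) and a standard argmax argument, this yields the consistency $\hat{\theta}_{m,n}\stackrel{p}{\longrightarrow}\bar{\theta}_{m}$ for every $m\in\{1,\dots,M\}$, and hence $n^{-1}\mathbb{H}_{m,n}(\hat{\theta}_{m,n})\stackrel{p}{\longrightarrow}\mathbb{H}_{m,0}(\bar{\theta}_{m})$.

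The crux is the strict separation $\mathbb{H}_{m^*,0}(\bar{\theta}_{m^*})>\mathbb{H}_{m,0}(\bar{\theta}_{m})$ for $m\in\mathfrak{M}^c$. Writing $g(\Sigma)=-2^{-1}\log\det\Sigma-2^{-1}\Sigma^{-1}[{\bf{\Sigma}}_0]$ on $\mathcal{M}^{+}_p$, so that $\mathbb{H}_{m,0}(\theta_m)=g\bigl({\bf{\Sigma}}_m(\theta_m)\bigr)$, I would use the identity
\begin{align*}
    g({\bf{\Sigma}}_0)-g(\Sigma)=2^{-1}\sum_{j}\bigl(\lambda_j-\log\lambda_j-1\bigr),
\end{align*}
where $\lambda_j>0$ are the eigenvalues of $\Sigma^{-1}{\bf{\Sigma}}_0$. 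Because $x-\log x-1\geq 0$ with equality iff $x=1$, and $\Sigma^{-1}{\bf{\Sigma}}_0$ is diagonalizable with positive eigenvalues, it follows that $g(\Sigma)\leq g({\bf{\Sigma}}_0)$ with equality iff $\Sigma={\bf{\Sigma}}_0$. Since $m^*\in\mathfrak{M}$ is correctly specified, ${\bf{\Sigma}}_{m^*}(\bar{\theta}_{m^*})={\bf{\Sigma}}_0$ and $\mathbb{H}_{m^*,0}(\bar{\theta}_{m^*})=g({\bf{\Sigma}}_0)$, whereas for $m\in\mathfrak{M}^c$ no parameter realizes ${\bf{\Sigma}}_0$, so ${\bf{\Sigma}}_m(\bar{\theta}_m)\neq{\bf{\Sigma}}_0$ and $\mathbb{H}_{m,0}(\bar{\theta}_m)<g({\bf{\Sigma}}_0)$. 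Set $c_m=\mathbb{H}_{m^*,0}(\bar{\theta}_{m^*})-\mathbb{H}_{m,0}(\bar{\theta}_m)>0$.

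Finally I would combine these. Writing
\begin{align*}
    n^{-1}\bigl(\mathrm{QBIC}^{(m^*)}_{2,n}-\mathrm{QBIC}^{(m)}_{2,n}\bigr)=-2\bigl(n^{-1}\mathbb{H}_{m^*,n}(\hat{\theta}_{m^*,n})-n^{-1}\mathbb{H}_{m,n}(\hat{\theta}_{m,n})\bigr)+(q_{m^*}-q_m)n^{-1}\log n,
\end{align*}
the penalty term vanishes and the first term converges to $-2c_m<0$, so ${\bf{P}}\bigl(\mathrm{QBIC}^{(m^*)}_{2,n}<\mathrm{QBIC}^{(m)}_{2,n}\bigr)\to 1$. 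For $\mathrm{QBIC}_{1,n}$, differentiating the uniform limit twice and using $\bf{[C2]}$ (b) gives $-n^{-1}\partial^2_{\theta_m}\mathbb{H}_{m,n}(\hat{\theta}_{m,n})\stackrel{p}{\longrightarrow}-\partial^2_{\theta_m}\mathbb{H}_{m,0}(\bar{\theta}_m)>0$, whence $\log\det\tilde{\bf{\Gamma}}_{m,n}(\hat{\theta}_{m,n})=O_p(1)$; dividing by $n$ annihilates this term, leaving the same limit $-2c_m$ and hence ${\bf{P}}\bigl(\mathrm{QBIC}^{(m^*)}_{1,n}<\mathrm{QBIC}^{(m)}_{1,n}\bigr)\to 1$. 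The main obstacle is establishing the consistency $\hat{\theta}_{m,n}\stackrel{p}{\longrightarrow}\bar{\theta}_m$ for the misspecified models (the uniform convergence together with the argmax step under $\bf{[C2]}$ (a)) and the strict separation $c_m>0$; once these are secured, the logarithmic penalty and the Hessian correction are of lower order and the conclusion is immediate.
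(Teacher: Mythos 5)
Your proposal is correct and follows essentially the same route as the paper: normalize the QBIC difference by $n$, use uniform convergence of $n^{-1}\mathbb{H}_{m,n}$ and consistency $\hat{\theta}_{m,n}\stackrel{p}{\longrightarrow}\bar{\theta}_m$ under {\bf{[C2]}} (a) to get $n^{-1}\mathbb{H}_{m,n}(\hat{\theta}_{m,n})\stackrel{p}{\longrightarrow}\mathbb{H}_{m,0}(\bar{\theta}_m)$, observe that the $\log n$ penalty and the $O_p(1)$ Hessian term vanish after division by $n$, and conclude from the strict gap $\mathbb{H}_{m,0}(\bar{\theta}_m)<\mathbb{H}_{m^*,0}(\theta_{m^*,0})$. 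The only substantive difference is that you actually prove the unique-maximizer property of $\Sigma\mapsto-2^{-1}\Sigma^{-1}[{\bf{\Sigma}}_0]-2^{-1}\log\det\Sigma$ via the eigenvalue identity $2^{-1}\sum_j(\lambda_j-\log\lambda_j-1)$, whereas the paper merely asserts it; this is a welcome addition rather than a deviation.
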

Theorem \ref{BICcons2} shows that the probability of choosing the misspecified models converges to zero as $n\longrightarrow\infty$.
\section{Numerical experiments}
\subsection{True model}
The stochastic process $\mathbb{X}_{1,0,t}$ is defined as follows:
{\setlength{\abovedisplayskip}{8pt}
\setlength{\belowdisplayskip}{8pt}
\begin{align*}
    \mathbb{X}_{1,0,t}&=\begin{pmatrix}
    1 & 3 & 4 & 6
    \end{pmatrix}^{\top}
    \xi_{0,t}+\delta_{0,t},
\end{align*}
where $\{\mathbb{X}_{1,0,t}\}_{t\geq 0}$ is a four-dimensional observable vector process, $\{\xi_{0,t}\}_{t\geq 0}$ is a one-dimensional latent common factor vector process, and $\{\delta_{0,t}\}_{t\geq 0}$ is a four-dimensional latent unique factor vector process. } Assume that $\{\xi_{0,t}\}_{t\geq 0}$ and $\{\delta_{0,t}\}_{t\geq 0}$ are  the one and four-dimensional OU processes as follows:
\begin{align*}
    \quad\dd \xi_{0,t}&=-\bigl(2\xi_{0,t}-5\bigr)\dd t+3\dd W_{1,t}, \ \ 
    \xi_{0,0}=3, \\
    \quad\dd\delta_{0,t}&=-\bigl(B_{2,0}\delta_{0,t}-\mu_{2,0}\bigr)
    \dd t+{\bf{S}}_{2,0}\dd W_{2,t},\ \ 
    \delta_{0,0}=0,      
\end{align*}
where $W_{1,t}$ and $W_{2,t}$ are one and four-dimensional standard Wiener processes, and
\begin{align*}
    B_{2,0}={\rm{Diag}}\bigl(5,2,1,3\bigr)^{\top},\quad \mu_{2,0}=\bigl(4,2,1,2\bigr)^{\top},\quad
    {\bf{S}}_{2,0}={\rm{Diag}}\bigl(2,1,2,3\bigr)^{\top}.
\end{align*}
The stochastic process $\mathbb{X}_{2,0,t}$ is defined by the following factor model:
{\setlength{\abovedisplayskip}{8pt}
\setlength{\belowdisplayskip}{8pt}
\begin{align*}
    \mathbb{X}_{2,0,t}&=\begin{pmatrix}
    1 & 3 & 2 & 0 & 0 & 0\\
    0 & 0 & 0 & 1 & 2 & 4
    \end{pmatrix}^{\top}\eta_{0,t}+\varepsilon_{0,t},
\end{align*}
where $\{\mathbb{X}_{2,0,t}\}_{t\geq 0}$ is a six-dimensional observable vector process, $\{\eta_{0,t}\}_{t\geq 0}$ is a two-dimensional latent common factor vector process, and $\{\varepsilon_{0,t}\}_{t\geq 0}$ is a six-dimensional latent unique factor vector process, respectively.} Suppose that $\{\varepsilon_{0,t}\}_{t\geq 0}$  satisfies the following  stochastic differential equation:
\begin{align*}
    \quad\dd\varepsilon_{0,t}&=-\bigl(B_{3,0}\varepsilon_{0,t}-\mu_{3,0}\bigr)\dd t+{\bf{S}}_{3,0}\dd W_{3,t}, \ \ 
     \varepsilon_{0,0}=0,     
\end{align*}
where $W_{3,t}$ is a six-dimensional standard Wiener process, and
\begin{align*}
    B_{3,0}={\rm{Diag}}\bigl(1,5,2,3,2,2\bigr)^{\top},\quad \mu_{3,0}=\bigl(2,1,3,2,1,4\bigr)^{\top},\quad {\bf{S}}_{3,0}={\rm{Diag}}\bigl(5,1,2,1,3,2\bigr)^{\top}.
\end{align*}
Furthermore, the relationship between $\eta_{0,t}$ and  $\xi_{0,t}$ is expressed as follows:
{\setlength{\abovedisplayskip}{8pt}
\setlength{\belowdisplayskip}{8pt}
\begin{align*}
    \eta_{0,t}=\begin{pmatrix}
    3 \\
    2
    \end{pmatrix}\xi_{0,t}+\zeta_{0,t},
\end{align*}
where $\{\zeta_{0,t}\}_{t\geq 0}$ is a two-dimensional latent unique factor vector process defined by the following two-dimensional OU process:}
\begin{align*}
    \quad\dd\zeta_{0,t}=-\left\{\begin{pmatrix}
    3 & 0 \\
    0 & 2
    \end{pmatrix}\zeta_{0,t}-\begin{pmatrix}
    1\\
    2
    \end{pmatrix}\right\}\dd t+\begin{pmatrix}
    3 & 0\\
    0 & 1
    \end{pmatrix}\dd W_{4,t},\ \ \zeta_{0,0}=0,
\end{align*}
where $W_{4,t}$ is a two-dimensional standard Wiener process. Figure \ref{truemodel} shows the path diagram of the true model at time $t$.
\begin{figure}[t]
    \includegraphics[width=0.9\columnwidth]{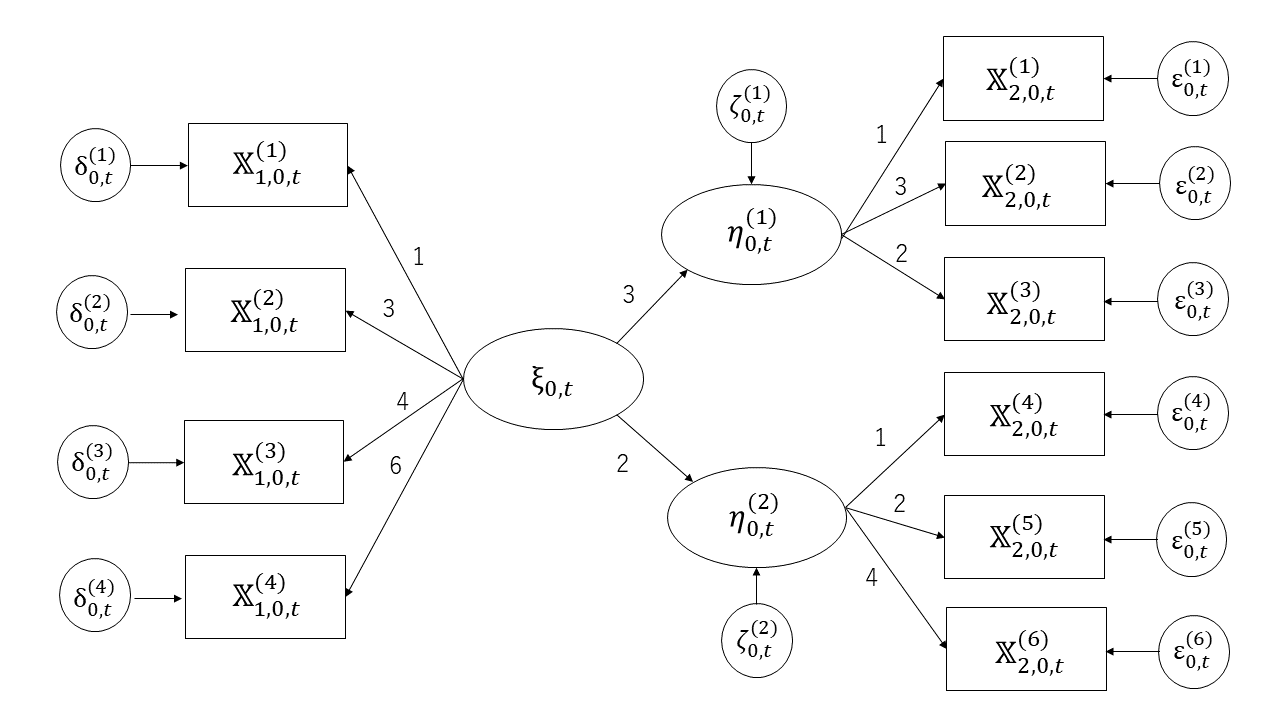}
    \caption{The path diagram of the true model at time $t$.}\label{truemodel}
\end{figure}
\subsection{Parametric models}
\subsubsection{Model 1}
Let $p_1=4$, $p_2=6$, $k_{1}=1$, $k_{2}=2$ and $q_1=22$. Suppose
{\setlength{\abovedisplayskip}{8pt}
\setlength{\belowdisplayskip}{8pt}
\begin{align*}
    {\bf{\Lambda}}^{\theta}_{1,x_1}&=\begin{pmatrix}
    1 & \theta^{(1)}_1 & 
    \theta^{(2)}_1 &  \theta^{(3)}_1
    \end{pmatrix}^{\top},\quad
    {\bf{\Lambda}}^{\theta}_{1,x_2}=\begin{pmatrix}
    1 &
    \theta_1^{(4)} &
    \theta_1^{(5)} &
    0 & 0 & 0\\
    0 & 0 & 0 & 1 & \theta_1^{(6)} &
    \theta_1^{(7)}
    \end{pmatrix}^{\top}
\end{align*}
and}
\begin{align*}
    {\bf{\Gamma}}^{\theta}_{1}=\begin{pmatrix}
    \theta_1^{(8)} &
    \theta_1^{(9)}
    \end{pmatrix}^{\top},\quad 
    {\bf{\Psi}}_1^{\theta}=\mathbb{I}_2,
\end{align*}
where $\theta_1^{(i)}$ for $i=1,\cdots,9$ are not zero. 
It is assumed that ${\bf{S}}^{\theta}_{1,1}$ and ${\bf{S}}^{\theta}_{2,1}$ satisfy 
\begin{align*}
    {\bf{\Sigma}}^{\theta}_{1,\xi\xi}=({\bf{S}}^{\theta}_{1,1})^2=\theta_1^{(10)}>0
\end{align*}
and 
\begin{align*}
    &{\bf{\Sigma}}^{\theta}_{1,\delta\delta}={\bf{S}}^{\theta}_{2,1}{\bf{S}}^{\theta\top}_{2,1}={\rm{Diag}}\Bigl(\theta_1^{(11)},\theta_1^{(12)},
    \theta_1^{(13)},\theta_1^{(14)}
    \Bigr)^{\top}\in\mathcal{M}_4^{+}.
\end{align*}
Furthermore, we suppose that ${\bf{S}}^{\theta}_{3,1}$ and ${\bf{S}}^{\theta}_{4,1}$ satisfy
\begin{align*}
    &\qquad\qquad\qquad{\bf{\Sigma}}^{\theta}_{1,\varepsilon\varepsilon}={\bf{S}}^{\theta}_{3,1}{\bf{S}}^{\theta\top}_{3,1}={\rm{Diag}}\Bigl(\theta_1^{(15)},\theta_1^{(16)},
    \theta_1^{(17)},\theta_1^{(18)},
    \theta_1^{(19)},\theta_1^{(20)}
    \Bigr)^{\top}\in\mathcal{M}_6^{+}
\end{align*}
and 
\begin{align*}
    {\bf{\Sigma}}^{\theta}_{1,\zeta\zeta}={\bf{S}}^{\theta}_{4,1}{\bf{S}}^{\theta\top}_{4,1}={\rm{Diag}}\Bigl(\theta_1^{(21)},\theta_1^{(22)}\Bigr)^{\top}\in\mathcal{M}_2^{+}.
\end{align*}
Let 
\begin{align*}
    \theta_{1,0}=\Bigl(3,4,6,3,2,2,4,3,2,9,4,1,4,9,25,1,4,1,9,4,9,1\Bigr)^{\top}.
\end{align*}
Since ${\bf{\Sigma}}_0={\bf{\Sigma}}_1(\theta_{1,0})$,  Model $1$ is a correctly specified model. Note that Model $1$ satisfies the identifiability condition {\bf{[C1]}} (a). For the check of the identifiability condition, see Appendix \ref{idensec}.
Figure \ref{Model1} shows the path diagram of Model $1$ at time $t$.
\begin{figure}[h]
    \includegraphics[width=0.9\columnwidth]{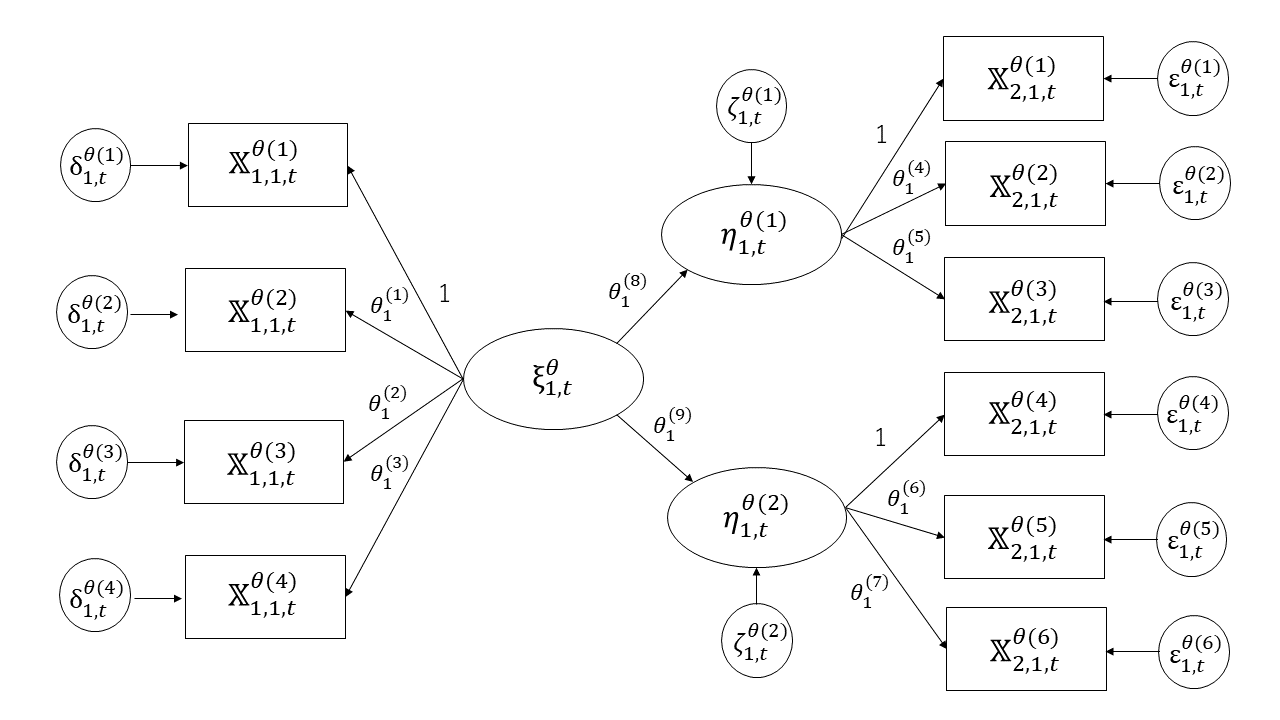}
    \caption{The path diagram of Model 1.} \label{Model1}
\end{figure}
\subsubsection{Model 2}
Set $p_1=4$, $p_2=6$, $k_{1}=1$, $k_{2}=2$ and $q_2=23$. Assume
{\setlength{\abovedisplayskip}{8pt}
\setlength{\belowdisplayskip}{8pt}
\begin{align*}
    {\bf{\Lambda}}^{\theta}_{2,x_1}&=\begin{pmatrix}
    1 & \theta^{(1)}_2 & 
    \theta^{(2)}_2 &  \theta^{(3)}_2
    \end{pmatrix}^{\top},\quad
    {\bf{\Lambda}}^{\theta}_{2,x_2}=\begin{pmatrix}
    1 &
    \theta_2^{(4)} &
    \theta_2^{(5)} &
    0 & 0 & 0\\
    0 & 0 & \theta_2^{(6)} & 1 & \theta_2^{(7)} &
    \theta_2^{(8)}
    \end{pmatrix}^{\top},
\end{align*}
and}
\begin{align*}
    {\bf{\Gamma}}^{\theta}_{2}=\begin{pmatrix}
    \theta_2^{(9)} &
    \theta_2^{(10)}
    \end{pmatrix}^{\top},\quad 
    {\bf{\Psi}}_2^{\theta}=\mathbb{I}_2,
\end{align*}
where $\theta_2^{(i)}$ for $i=1,\cdots,5$ and $i=7,\cdots,10$ are not zero. ${\bf{S}}^{\theta}_{1,2}$ and ${\bf{S}}^{\theta}_{2,2}$ are supposed to satisfy 
\begin{align*}
    {\bf{\Sigma}}^{\theta}_{2,\xi\xi}=({\bf{S}}^{\theta}_{1,2})^2=\theta_1^{(11)}>0
\end{align*}
and
\begin{align*}
    {\bf{\Sigma}}^{\theta}_{2,\delta\delta}&={\bf{S}}^{\theta}_{2,2}{\bf{S}}^{\theta\top}_{2,2}={\rm{Diag}}\Bigl(\theta_2^{(12)},\theta_2^{(13)},
    \theta_2^{(14)},\theta_2^{(15)} \Bigr)^{\top}\in\mathcal{M}_4^{+}.
\end{align*}
Moreover, we assume that ${\bf{S}}^{\theta}_{3,2}$ and ${\bf{S}}^{\theta}_{4,2}$ satisfy
\begin{align*}
    &\qquad\qquad\qquad{\bf{\Sigma}}^{\theta}_{2,\varepsilon\varepsilon}={\bf{S}}^{\theta}_{3,2}{\bf{S}}^{\theta\top}_{3,2}={\rm{Diag}}\Bigl(\theta_2^{(16)},
    \theta_2^{(17)},\theta_2^{(18)},
    \theta_2^{(19)},\theta_2^{(20)},\theta_2^{(21)}
    \Bigr)^{\top}\in\mathcal{M}_6^{+}
\end{align*}
and 
\begin{align*}
    {\bf{\Sigma}}^{\theta}_{2,\zeta\zeta}={\bf{S}}^{\theta}_{4,2}{\bf{S}}^{\theta\top}_{4,2}={\rm{Diag}}\Bigl(\theta_1^{(22)},\theta_1^{(23)}\Bigr)^{\top}\in\mathcal{M}_2^{+}.
\end{align*}
Let 
\begin{align*}
    \theta_{2,0}=\Bigl(3,4,6,3,2,0,2,4,3,2,9,4,1,4,9,25,1,4,1,9,4,9,1\Bigr)^{\top}.
\end{align*}
It holds that ${\bf{\Sigma}}_0={\bf{\Sigma}}_2(\theta_{2,0})$, so that Model $2$ is a correctly specified model. In a similar way to
Model $1$, we can check the identifiability condition of Model $2$. Figure \ref{Model2} shows the path diagram of Model $2$ at time $t$.
\begin{figure}[h]
    \includegraphics[width=0.9\columnwidth]{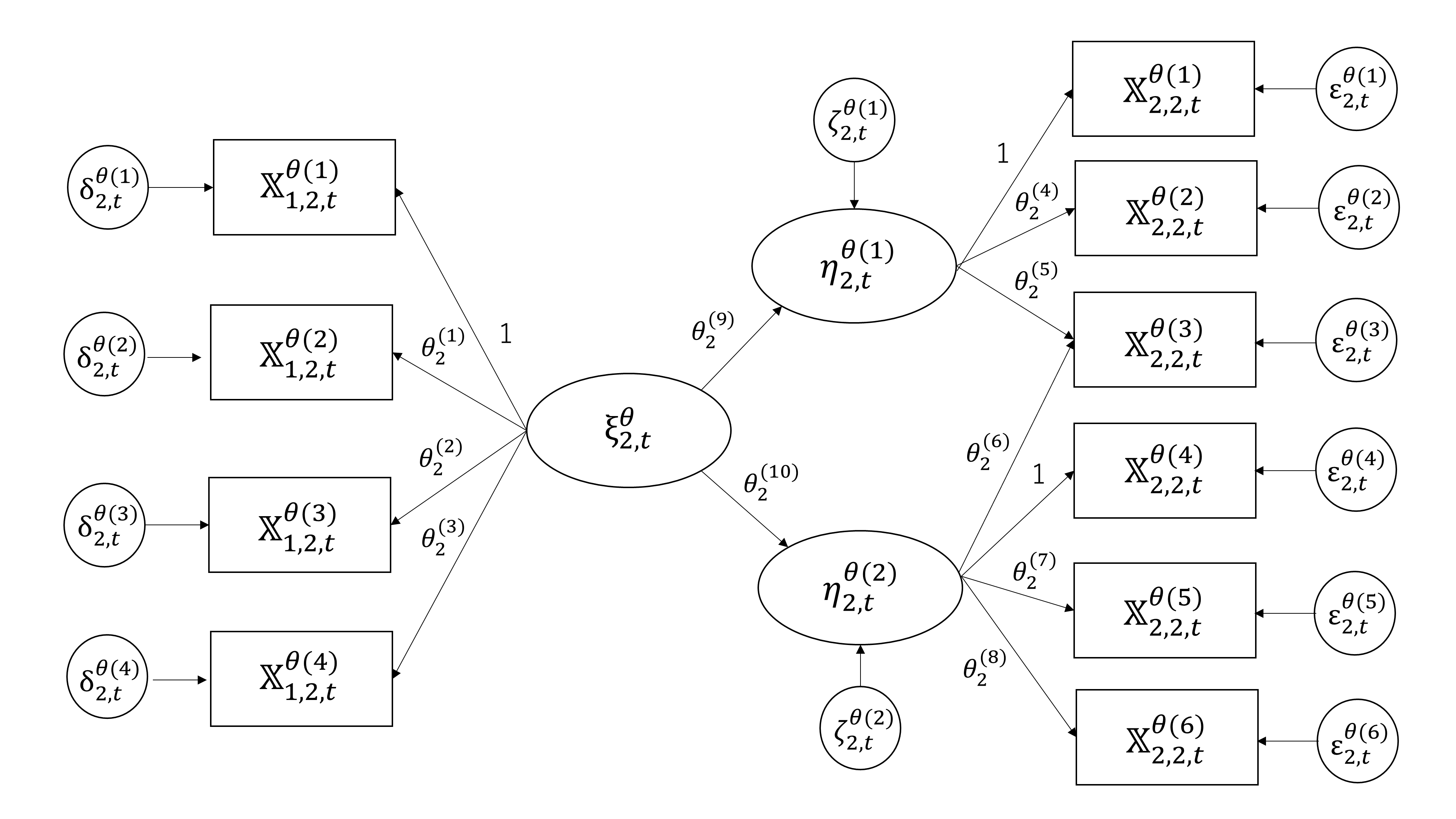}
    \caption{The path diagram of Model 2.} \label{Model2}
\end{figure}
\subsubsection{Model 3} 
Let $p_1=4$, $p_2=6$, $k_{1}=1$, $k_{2}=1$ and $q_3=21$. Suppose 
\begin{align*}
    {\bf{\Lambda}}^{\theta}_{3,x_1}&=\begin{pmatrix}
    1 &
    \theta^{(1)}_3 &
    \theta^{(2)}_3 &
    \theta^{(3)}_3 
    \end{pmatrix}^{\top}
\end{align*}
and
\begin{align*}
    {\bf{\Lambda}}^{\theta}_{3,x_2}=\begin{pmatrix}
    1 &
    \theta_3^{(4)} &
    \theta^{(5)}_3 &
    \theta^{(6)}_3 &
    \theta^{(7)}_3 &
    \theta^{(8)}_3
    \end{pmatrix}^{\top},\quad
    {\bf{\Gamma}}^{\theta}_{3}=
    \theta_3^{(9)},
\end{align*}
where $\theta^{(i)}_3$ for $i=1,\cdots,9$ are not zero. It is assumed that ${\bf{S}}^{\theta}_{1,3}$ and ${\bf{S}}^{\theta}_{2,3}$ satisfy 
\begin{align*}
    {\bf{\Sigma}}^{\theta}_{3,\xi\xi}=({\bf{S}}^{\theta}_{1,3})^2=\theta_3^{(10)}>0
\end{align*}
and
\begin{align*}
    {\bf{\Sigma}}^{\theta}_{3,\delta\delta}&={\bf{S}}^{\theta}_{2,3}{\bf{S}}^{\theta\top}_{2,3}
    ={\rm{Diag}}\Bigl(\theta_3^{(11)},\theta_3^{(12)},
    \theta_3^{(13)},
    \theta_3^{(14)}
    \Bigr)^{\top}\in\mathcal{M}_4^{+}.
\end{align*}
Furthermore, we suppose that ${\bf{S}}^{\theta}_{3,3}$ and ${\bf{S}}^{\theta}_{4,3}$ satisfy
\begin{align*}
    {\bf{\Sigma}}^{\theta}_{3,\varepsilon\varepsilon}&={\bf{S}}^{\theta}_{3,3}{\bf{S}}^{\theta\top}_{3,3}=
    {\rm{Diag}}\Bigl(\theta_3^{(15)},\theta_3^{(16)},
    \theta_3^{(17)},
    \theta_3^{(18)},
    \theta_3^{(19)},
    \theta_3^{(20)}
    \Bigr)^{\top}
    \in\mathcal{M}_6^{+}
\end{align*}
and
\begin{align*}
    {\bf{\Sigma}}^{\theta}_{3,\zeta\zeta}=({\bf{S}}^{\theta}_{4,3})^2
    =\theta_3^{(21)}>0.
\end{align*}
Since we have ${\bf{\Sigma}}_0\neq{\bf{\Sigma}}_3(\theta_{3})$ for any $\theta_3\in\Theta_3$, Model $3$ is a misspecified model. Figure \ref{Model3} shows the path diagram of Model $3$ at time $t$.
\begin{figure}[h]
    \includegraphics[width=0.9\columnwidth]{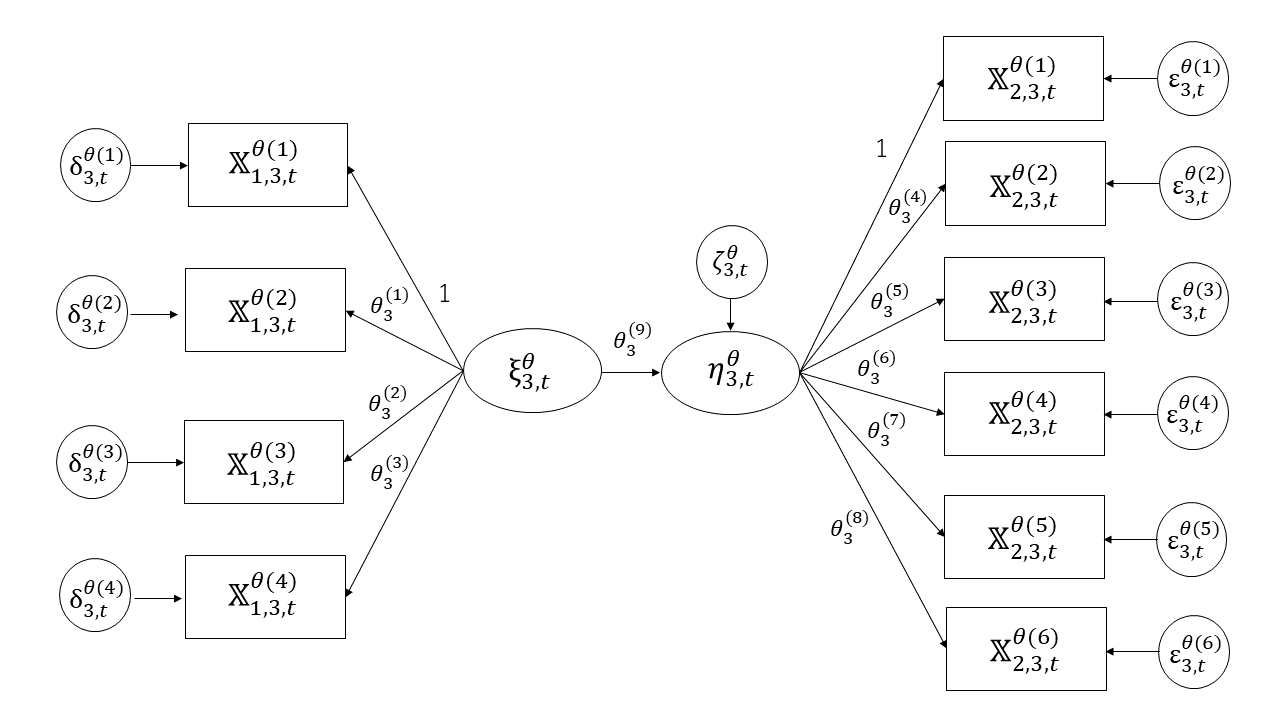}
    \caption{The path diagram of Model 3.} \label{Model3}
\end{figure}
\subsection{Simulation results}
In the simulation, we use optim() with the BFGS method in R language. 
The true value was used as the initial value for optimisation. $10000$ independent sample paths are generated from the true model. Let $T=1$ and we consider the case where $n=10^2, 10^3, 10^4, 10^5$. To see the differences in model selection, we use 
\begin{align*}
    {\rm{QAIC}}_n^{(m)}=-2\mathbb{H}_{m,n}(\hat{\theta}_{m,n})+2q_m.
\end{align*}
See Kusano and Uchida \cite{Kusano(2024c)} for the details of QAIC for SEM with latent variables for diffusion processes. 

Table \ref{table} shows the number of models selected by ${\rm{QBIC}}_{1}$, ${\rm{QBIC}}_{2}$  and QAIC. For any cases, ${\rm{QBIC}}_{1}$ and ${\rm{QBIC}}_{2}$ choose the optimal model (Model $1$), which implies that Theorem \ref{BICcons1} seems to be correct in this example. 
In all cases, ${\rm{QBIC}}_{2}$ is more likely to select the optimal model than ${\rm{QBIC}}_{1}$ especially as sample size is small. On the other hand, $\rm{QAIC}$ selects the over-fitted model (Model $2$) with significant probability even for a large sample size. This implies that $\rm{QAIC}$ does not have the model selection consistency. 
Furthermore, in all cases, the misspecified model (Model $3$) is not selected by ${\rm{QBIC}}_{1}$, ${\rm{QBIC}}_{2}$ and $\rm{QAIC}$. It seems that Theorem $2$ in Kusano and Uchida \cite{Kusano(2024c)} and Theorem \ref{BICcons2} hold true for this example. 
\begin{table}[h]
    \vspace{5mm}
    \begin{tabular}{ccccc}
    $n=10^2$ & Model 1$^*$ & Model 2 & Model 3 \\\hline
    ${\rm{QBIC}}_{1}$ & 9469 & 531 & 0 \\
    ${\rm{QBIC}}_{2}$ & 9644 & 356 & 0\\
    QAIC & 8327 & 1673 & 0
    \end{tabular}\qquad
    \begin{tabular}{ccccc}
    $n=10^3$ & Model 1$^*$  & Model 2 & Model 3 \\\hline
    ${\rm{QBIC}}_{1}$ & 9854 & 146 & 0 \\
    ${\rm{QBIC}}_{2}$ & 9911 & 89 & 0\\
    QAIC & 8465 & 1535 & 0
    \end{tabular}\\
    \vspace{4mm}
    \begin{tabular}{ccccc}
    $n=10^4$ & Model 1$^*$  & Model 2 & Model 3 \\\hline
    ${\rm{QBIC}}_{1}$ & 9956 & 44 & 0 \\
    ${\rm{QBIC}}_{2}$ & 9972 & 28 & 0\\
    QAIC & 8443 & 1557 & 0
    \end{tabular}\qquad
    \begin{tabular}{ccccc}
    $n=10^5$ & Model 1$^*$  & Model 2 & Model 3 \\\hline
    ${\rm{QBIC}}_{1}$ & 9991 & 9 & 0\\
    ${\rm{QBIC}}_{2}$ & 9995 & 5 & 0\\
    QAIC & 8400 & 1600 & 0
    \end{tabular}
    \caption{The number of models selected by ${\rm{QBIC}}_{1}$, ${\rm{QBIC}}_{2}$ and QAIC.}\label{table}
\end{table}
\section{Proof}
In this section, the model index "$m$" may be omitted. Let ${\bf{\Gamma}}_0=\Delta_0^{\top}{\bf{W}}_0^{-1}\Delta_0$, where 
\begin{align*}
    {\bf{W}}_0=2\mathbb{D}_p^{+}\bigl({\bf{\Sigma}}_0\otimes {\bf{\Sigma}}_0\bigr)\mathbb{D}^{+\top}_p.
\end{align*}
Set the random field $\mathbb{Z}_n$ as 
\begin{align*}
    \mathbb{Z}_n(u)=\mathbb{Z}_n(u;\theta_0)=\exp\Bigl\{\mathbb{H}_n\bigl(\theta_0+n^{-\frac{1}{2}}u\bigr)-\mathbb{H}_n(\theta_0)\Bigr\}
\end{align*}
for $u\in\mathbb{U}_n(\theta_0)$ and $\mathbb{Z}_n(u)=0$ for $u\in\mathbb{U}_n(\theta_0)^c$.
Furthermore, we define
\begin{align*}
    \mathbb{V}_n(r)=\mathbb{U}_n(\theta_0)\cap\bigl\{u\in\mathbb{R}^{q}:|u|\geq r\bigr\}=\Bigl\{u\in\mathbb{U}_n(\theta_0):|u|\geq r\Bigr\}
\end{align*}
for $r>0$. 
\begin{lemma}\label{A1}
Under {\bf{[A]}}, as $n\longrightarrow\infty$,
\begin{align}
    \frac{1}{\sqrt{n}}\mathbb{H}_n(\theta_0)=O_p(1),\quad -\frac{1}{n}\partial^2_{\theta}\mathbb{H}_n(\theta_0)\stackrel{p}{\longrightarrow}{\bf{\Gamma}}_0>0 \label{gamma}
\end{align}
and
\begin{align}
    \sup_{\theta\in\Theta}\biggl|\frac{1}{n\sqrt{n}}\partial_{\theta^{(j_1)}}\partial_{\theta^{(j_2)}}
    \partial_{\theta^{(j_3)}}\mathbb{H}_n(\theta)\biggr|=o_p(1) \label{supH}
\end{align}
for $j_1,j_2,j_3=1,\cdots,q$.
\end{lemma}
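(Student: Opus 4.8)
The plan is to rewrite $\mathbb{H}_n$ as an explicit smooth function of $\theta$ and the realized covariance $\mathbb{Q}_{\mathbb{XX}}$, and then reduce all three assertions to the asymptotics of $\mathbb{Q}_{\mathbb{XX}}$. Writing ${\bf{\Sigma}}(\theta)={\bf{\Sigma}}_m(\theta_m)$, using $h_n=T/n$ and $A[B]=\tr(AB^{\top})$, one obtains
\begin{align*}
    \mathbb{H}_n(\theta)=-\frac{n}{2}\Bigl(\log\det{\bf{\Sigma}}(\theta)+{\bf{\Sigma}}(\theta)^{-1}\bigl[\mathbb{Q}_{\mathbb{XX}}\bigr]\Bigr),
\end{align*}
so every derivative $\partial^{j}_{\theta}\mathbb{H}_n(\theta)$ equals $-n/2$ times a function that is $C^{\infty}$ in $\theta$ on the compact set $\Theta$ and affine in $\mathbb{Q}_{\mathbb{XX}}$. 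Here I use that ${\bf{\Sigma}}(\theta)>0$ is continuous on $\Theta$, whence $\inf_{\theta\in\Theta}\lambda_{\min}({\bf{\Sigma}}(\theta))>0$ and all derivatives of ${\bf{\Sigma}}(\theta)^{-1}$ and $\log\det{\bf{\Sigma}}(\theta)$ are bounded on $\Theta$.

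The key input, and the main obstacle, is the asymptotic behaviour of $\mathbb{Q}_{\mathbb{XX}}$. The factor structure makes $\{\mathbb{X}_t\}$ a diffusion whose martingale part has the \emph{constant} coefficient $C_0$ (built from ${\bf{\Lambda}}_{x_1,0},{\bf{\Lambda}}_{x_2,0},{\bf{\Psi}}_0^{-1},{\bf{\Gamma}}_0$ and the ${\bf{S}}_{\cdot,0}$) with $C_0C_0^{\top}={\bf{\Sigma}}_0$, and a drift $\mu_t$ that is a linear image of $(B_{1}(\xi_{0,t}),B_{2}(\delta_{0,t}),B_{3}(\varepsilon_{0,t}),B_{4}(\zeta_{0,t}))$, hence of polynomial growth with $\sup_t{\bf{E}}|\mu_t|^{\ell}<\infty$ by {\bf{[A]}}. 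Thus $\Delta_i\mathbb{X}=\int_{t_{i-1}^n}^{t_i^n}\mu_s\,\dd s+C_0\Delta_iW^{*}$, where $W^{*}=(W_{1}^{\top},W_{2}^{\top},W_{3}^{\top},W_{4}^{\top})^{\top}$. Decomposing $(\Delta_i\mathbb{X})^{\otimes 2}$ into the pure-martingale term, the drift--drift term and the cross term, a martingale-difference law of large numbers together with the moment bounds gives $\mathbb{Q}_{\mathbb{XX}}\stackrel{p}{\longrightarrow}{\bf{\Sigma}}_0$, while counting orders (variance $O(h_n^2)$ per martingale increment over $n$ terms, and cross terms $O(h_n^{3/2})$) yields the sharper rate $\mathbb{Q}_{\mathbb{XX}}-{\bf{\Sigma}}_0=O_p(n^{-1/2})$. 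This is where the diffusion theory under {\bf{[A]}} does the real work; I would either invoke the realized-covariance limit theorems of Kusano and Uchida or reproduce these martingale-CLT order estimates.

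Given these facts, the three assertions follow by differentiation and substitution. For the first (read as the normalized score $n^{-1/2}\partial_{\theta}\mathbb{H}_n(\theta_0)$, consistent with the second- and third-order statements and with the Laplace expansion of Theorem \ref{Hexpansion}), a direct computation at $\theta_0$, where ${\bf{\Sigma}}(\theta_0)={\bf{\Sigma}}_0$, collapses the two terms into
\begin{align*}
    \partial_{\theta^{(j)}}\mathbb{H}_n(\theta_0)=\frac{n}{2}\tr\Bigl({\bf{\Sigma}}_0^{-1}\bigl(\partial_{\theta^{(j)}}{\bf{\Sigma}}\bigr){\bf{\Sigma}}_0^{-1}\bigl(\mathbb{Q}_{\mathbb{XX}}-{\bf{\Sigma}}_0\bigr)\Bigr),
\end{align*}
so $n^{-1/2}\partial_{\theta}\mathbb{H}_n(\theta_0)=\tfrac12\sqrt{n}\,\tr(\,\cdot\,)=O_p(1)$ by the rate $\sqrt{n}(\mathbb{Q}_{\mathbb{XX}}-{\bf{\Sigma}}_0)=O_p(1)$. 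For (\ref{supH}), the uniform estimate $\sup_{\theta\in\Theta}|\partial^3_{\theta}(\,\cdot\,)|=O_p(1)$ (continuity in $\theta$ on $\Theta$ and $\mathbb{Q}_{\mathbb{XX}}=O_p(1)$) times the prefactor $\tfrac{1}{n\sqrt n}\cdot\tfrac n2=\tfrac{1}{2\sqrt n}$ gives $o_p(1)$. For the Hessian, $\mathbb{Q}_{\mathbb{XX}}\stackrel{p}{\longrightarrow}{\bf{\Sigma}}_0$ and the continuous mapping theorem give
\begin{align*}
    -\frac{1}{n}\partial^2_{\theta}\mathbb{H}_n(\theta_0)\stackrel{p}{\longrightarrow}\frac12\Bigl[\tr\bigl({\bf{\Sigma}}_0^{-1}(\partial_{\theta^{(j)}}{\bf{\Sigma}}){\bf{\Sigma}}_0^{-1}(\partial_{\theta^{(k)}}{\bf{\Sigma}})\bigr)\Bigr]_{j,k},
\end{align*}
the terms in $\partial^2_{\theta}{\bf{\Sigma}}$ and the mismatched products cancelling at $\theta_0$ because ${\bf{\Sigma}}(\theta_0)={\bf{\Sigma}}_0$.

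Finally I would identify this limit with ${\bf{\Gamma}}_0$. Using $\tr(PAPB)=\vec(A)^{\top}(P\otimes P)\vec(B)$ with $P={\bf{\Sigma}}_0^{-1}$, the symmetry relation $\vec(\partial_{\theta^{(j)}}{\bf{\Sigma}})=\mathbb{D}_p\vech(\partial_{\theta^{(j)}}{\bf{\Sigma}})$, and the fact that $\vech(\partial_{\theta^{(j)}}{\bf{\Sigma}})$ is the $j$-th column of $\Delta_0$, the limit equals $\Delta_0^{\top}\bigl(\tfrac12\mathbb{D}_p^{\top}({\bf{\Sigma}}_0^{-1}\otimes{\bf{\Sigma}}_0^{-1})\mathbb{D}_p\bigr)\Delta_0$. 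The standard duplication-matrix identity $[\mathbb{D}_p^{\top}({\bf{\Sigma}}_0^{-1}\otimes{\bf{\Sigma}}_0^{-1})\mathbb{D}_p]^{-1}=\mathbb{D}_p^{+}({\bf{\Sigma}}_0\otimes{\bf{\Sigma}}_0)\mathbb{D}_p^{+\top}$ shows $\tfrac12\mathbb{D}_p^{\top}({\bf{\Sigma}}_0^{-1}\otimes{\bf{\Sigma}}_0^{-1})\mathbb{D}_p={\bf{W}}_0^{-1}$, so the limit is $\Delta_0^{\top}{\bf{W}}_0^{-1}\Delta_0={\bf{\Gamma}}_0$; positive definiteness then follows since ${\bf{W}}_0^{-1}>0$ and $\Delta_0$ has full column rank $q$ by the rank condition {\bf{[C1]}}\,(b).
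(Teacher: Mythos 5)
Your proposal is correct, and for the second statement of (\ref{gamma}) and for (\ref{supH}) it follows essentially the same route as the paper: rewrite $\mathbb{H}_n(\theta)=-\tfrac{n}{2}\bigl(\log\det{\bf{\Sigma}}(\theta)+{\bf{\Sigma}}(\theta)^{-1}[\mathbb{Q}_{\mathbb{XX}}]\bigr)$, note that every $\theta$-derivative is affine in $\mathbb{Q}_{\mathbb{XX}}$ with coefficients continuous on the compact $\Theta$, and feed in $\mathbb{Q}_{\mathbb{XX}}\stackrel{p}{\longrightarrow}{\bf{\Sigma}}_0$; the paper's own treatment of (\ref{supH}) is exactly your bound $\tfrac{1}{\sqrt n}\,|\mathbb{Q}_{\mathbb{XX}}|\sup_\theta|\partial^3{\bf{\Sigma}}(\theta)^{-1}|+\tfrac{1}{\sqrt n}\sup_\theta|\partial^3\log\det{\bf{\Sigma}}(\theta)|\to 0$. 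The genuine difference is in (\ref{gamma}): the paper simply cites Lemma 1 of Kusano and Uchida (2024c), whereas you derive it from scratch by computing the score $\partial_{\theta^{(j)}}\mathbb{H}_n(\theta_0)=\tfrac{n}{2}\tr\bigl({\bf{\Sigma}}_0^{-1}(\partial_{\theta^{(j)}}{\bf{\Sigma}}){\bf{\Sigma}}_0^{-1}(\mathbb{Q}_{\mathbb{XX}}-{\bf{\Sigma}}_0)\bigr)$ and the Hessian limit $\tfrac12\tr\bigl({\bf{\Sigma}}_0^{-1}(\partial_{\theta^{(j)}}{\bf{\Sigma}}){\bf{\Sigma}}_0^{-1}(\partial_{\theta^{(k)}}{\bf{\Sigma}})\bigr)$, then identifying the latter with $\Delta_0^\top{\bf{W}}_0^{-1}\Delta_0$ via the duplication-matrix identity; this buys a self-contained proof whose only external input is the rate $\sqrt n(\mathbb{Q}_{\mathbb{XX}}-{\bf{\Sigma}}_0)=O_p(1)$, which you correctly reduce to the realized-covariance CLT (the same result that motivates the definition of ${\bf{W}}_0$). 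Two of your side observations are worth endorsing: the displayed claim $n^{-1/2}\mathbb{H}_n(\theta_0)=O_p(1)$ cannot be meant literally, since $n^{-1}\mathbb{H}_n(\theta_0)\stackrel{p}{\longrightarrow}-\tfrac12(\log\det{\bf{\Sigma}}_0+p)\neq 0$ in general, so reading it as the normalized score (as required by Assumption 3.1 of Eguchi and Masuda) is the right correction; and the positive definiteness ${\bf{\Gamma}}_0>0$ really does need the rank condition {\bf{[C1]}}\,(b) rather than {\bf{[A]}} alone, which you make explicit while the lemma's hypothesis line does not.
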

\begin{proof}
See Lemma $1$ in Kusano and Uchida \cite{Kusano(2024c)} for the proof of (\ref{gamma}). Note that
\begin{align*}
    n^{-1}\partial_{\theta^{(j_1)}}\partial_{\theta^{(j_2)}}
    \partial_{\theta^{(j_3)}}\mathbb{H}_n(\theta)&=-\frac{1}{2nh_n}\sum_{i=1}^n\Bigl(\partial_{\theta^{(j_1)}}\partial_{\theta^{(j_2)}}
    \partial_{\theta^{(j_3)}}{\bf{\Sigma}}(\theta)^{-1}\Bigr)\Bigl[\bigl(\Delta_i\mathbb{X}\bigr)^{\otimes 2}\Bigr]\\
    &\qquad\qquad\qquad\qquad\qquad\qquad-2^{-1}\partial_{\theta^{(j_1)}}\partial_{\theta^{(j_2)}}
    \partial_{\theta^{(j_3)}}\log\det{\bf{\Sigma}}(\theta)\\
    &=-2^{-1}\Bigl(\partial_{\theta^{(j_1)}}\partial_{\theta^{(j_2)}}
    \partial_{\theta^{(j_3)}}{\bf{\Sigma}}(\theta)^{-1}\Bigr)\bigl[\mathbb{Q}_{\mathbb{XX}}\bigr]\\
    &\qquad\qquad\qquad\qquad\qquad\qquad-2^{-1}\partial_{\theta^{(j_1)}}\partial_{\theta^{(j_2)}}
    \partial_{\theta^{(j_3)}}\log\det{\bf{\Sigma}}(\theta)
\end{align*}
for $j_1,j_2,j_3=1,\cdots,q$. Since it holds from Theorem $1$ in Kusano and Uchida \cite{Kusano(JJSD)} that $\mathbb{Q}_{\mathbb{XX}}\stackrel{p}{\longrightarrow}{\bf{\Sigma}}_0$, the continuous mapping theorem shows $|\mathbb{Q}_{\mathbb{XX}}|\stackrel{p}{\longrightarrow}|{\bf{\Sigma}}_0|$.
Hence, it follows from the compactness of $\Theta$ that
{\setlength{\abovedisplayskip}{8pt}
\setlength{\belowdisplayskip}{8pt}
\begin{align*}
    &\quad\ \sup_{\theta\in\Theta}\biggl|\frac{1}{n\sqrt{n}}\partial_{\theta^{(j_1)}}\partial_{\theta^{(j_2)}}
    \partial_{\theta^{(j_3)}}\mathbb{H}_n(\theta)\biggr|\\
    &\leq \frac{1}{\sqrt{n}}\sup_{\theta\in\Theta}\Bigl|
    \Bigl(\partial_{\theta^{(j_1)}}\partial_{\theta^{(j_2)}}
    \partial_{\theta^{(j_3)}}{\bf{\Sigma}}(\theta)^{-1}\Bigr)\bigl[\mathbb{Q}_{\mathbb{XX}}\bigr]
    \Bigr|+\frac{1}{\sqrt{n}}\sup_{\theta\in\Theta}\Bigl|\partial_{\theta^{(j_1)}}\partial_{\theta^{(j_2)}}
    \partial_{\theta^{(j_3)}}\log\det{\bf{\Sigma}}(\theta)\Bigr|\\
    &\leq \frac{1}{\sqrt{n}}\bigl|\mathbb{Q}_{\mathbb{XX}}\bigr|\sup_{\theta\in\Theta}\Bigl|
    \partial_{\theta^{(j_1)}}\partial_{\theta^{(j_2)}}
    \partial_{\theta^{(j_3)}}{\bf{\Sigma}}(\theta)^{-1}
    \Bigr|+\frac{1}{\sqrt{n}}\sup_{\theta\in\Theta}
    \Bigl|\partial_{\theta^{(j_1)}}\partial_{\theta^{(j_2)}}
    \partial_{\theta^{(j_3)}}\log\det{\bf{\Sigma}}(\theta)\Bigr|
    \stackrel{p}{\longrightarrow} 0
\end{align*}}
for $j_1,j_2,j_3=1,\cdots,q$, which yields (\ref{supH}).
\end{proof}
\begin{lemma}\label{thetahat}
Under {\bf{[A]}} and {\bf{[C1]}} (a),
\begin{align*}
    \hat{\theta}_n\stackrel{p}{\longrightarrow}\theta_0
\end{align*}
as $n\longrightarrow\infty$.
\end{lemma}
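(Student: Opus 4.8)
The plan is to carry out the standard argmax/consistency argument for the quasi-likelihood contrast, exploiting that $\mathbb{Q}_{\mathbb{XX}}$ enters $\mathbb{H}_n$ only linearly. First I would rewrite the normalized quasi-log likelihood in terms of $\mathbb{Q}_{\mathbb{XX}}$: using $h_n=T/n$, the identity $\sum_{i=1}^n(\Delta_i\mathbb{X})^{\otimes 2}=T\mathbb{Q}_{\mathbb{XX}}$, and $A[B]=\tr(AB^\top)$, a direct computation gives
\[
    n^{-1}\mathbb{H}_n(\theta)=-2^{-1}{\bf{\Sigma}}(\theta)^{-1}\bigl[\mathbb{Q}_{\mathbb{XX}}\bigr]-2^{-1}\log\det{\bf{\Sigma}}(\theta).
\]
Subtracting the value at $\theta_0$ and setting $\mathbb{Y}_n(\theta)=n^{-1}\bigl(\mathbb{H}_n(\theta)-\mathbb{H}_n(\theta_0)\bigr)$, the $\log\det$ contributions combine into exactly the second term of $\mathbb{Y}(\theta)$ (the function $\mathbb{Y}_m$ with the index suppressed), while the first term differs from that of $\mathbb{Y}(\theta)$ only by having $\mathbb{Q}_{\mathbb{XX}}$ in place of ${\bf{\Sigma}}_0={\bf{\Sigma}}(\theta_0)$. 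Since $A[\,\cdot\,]$ is linear in its argument, this yields
\[
    \mathbb{Y}_n(\theta)-\mathbb{Y}(\theta)=-2^{-1}\bigl({\bf{\Sigma}}(\theta)^{-1}-{\bf{\Sigma}}(\theta_0)^{-1}\bigr)\bigl[\mathbb{Q}_{\mathbb{XX}}-{\bf{\Sigma}}_0\bigr].
\]

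Next I would establish $\sup_{\theta\in\Theta}\bigl|\mathbb{Y}_n(\theta)-\mathbb{Y}(\theta)\bigr|\stackrel{p}{\longrightarrow}0$. Applying $|A[B]|\leq|A||B|$ (Cauchy--Schwarz for the Frobenius inner product) to the display above bounds the supremum by $2^{-1}\bigl|\mathbb{Q}_{\mathbb{XX}}-{\bf{\Sigma}}_0\bigr|\sup_{\theta\in\Theta}\bigl|{\bf{\Sigma}}(\theta)^{-1}-{\bf{\Sigma}}(\theta_0)^{-1}\bigr|$. The supremum is a finite constant, because $\theta\mapsto{\bf{\Sigma}}(\theta)^{-1}$ is continuous on the compact set $\Theta$ (the model covariance is positive definite throughout $\Theta$), and $\mathbb{Q}_{\mathbb{XX}}\stackrel{p}{\longrightarrow}{\bf{\Sigma}}_0$ by Theorem $1$ in Kusano and Uchida \cite{Kusano(JJSD)}, already invoked in the proof of Lemma \ref{A1}. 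Hence the product converges to $0$ in probability.

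Finally I would invoke the identifiability bound {\bf{[C1]}} (a). Since $\hat\theta_n$ maximizes $\mathbb{H}_n$, it maximizes $\mathbb{Y}_n$, so $\mathbb{Y}_n(\hat\theta_n)\geq\mathbb{Y}_n(\theta_0)=0$. Combining this with $\mathbb{Y}(\theta)\leq-\chi|\theta-\theta_0|^2$ gives
\[
    \chi|\hat\theta_n-\theta_0|^2\leq-\mathbb{Y}(\hat\theta_n)\leq\mathbb{Y}_n(\hat\theta_n)-\mathbb{Y}(\hat\theta_n)\leq\sup_{\theta\in\Theta}\bigl|\mathbb{Y}_n(\theta)-\mathbb{Y}(\theta)\bigr|,
\]
and letting $n\to\infty$ yields $|\hat\theta_n-\theta_0|\stackrel{p}{\longrightarrow}0$, that is, $\hat\theta_n\stackrel{p}{\longrightarrow}\theta_0$. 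The only genuinely delicate point is the uniform-in-$\theta$ control, but because $\mathbb{Q}_{\mathbb{XX}}$ appears linearly and the $\theta$-dependence is confined to the bounded, continuous factor ${\bf{\Sigma}}(\theta)^{-1}-{\bf{\Sigma}}(\theta_0)^{-1}$, this reduces to the single convergence $\mathbb{Q}_{\mathbb{XX}}\to{\bf{\Sigma}}_0$, and no maximal-inequality machinery is required; the argument coincides with Lemma $2$ of Kusano and Uchida \cite{Kusano(2024c)}.
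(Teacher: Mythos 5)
Your argument is correct. The paper itself gives no proof of this lemma beyond the citation ``See Lemma $2$ in Kusano and Uchida \cite{Kusano(2024c)}'', and what you have written out is exactly the standard argmax-consistency argument that this citation stands in for: the normalization $n^{-1}\mathbb{H}_n(\theta)=-2^{-1}{\bf{\Sigma}}(\theta)^{-1}[\mathbb{Q}_{\mathbb{XX}}]-2^{-1}\log\det{\bf{\Sigma}}(\theta)$ is the same identity the paper itself uses in the proof of Lemma \ref{Huni}, the uniform convergence $\sup_{\theta}|\mathbb{Y}_n(\theta)-\mathbb{Y}(\theta)|\stackrel{p}{\longrightarrow}0$ follows from the linearity of $A[\,\cdot\,]$ and $\mathbb{Q}_{\mathbb{XX}}\stackrel{p}{\longrightarrow}{\bf{\Sigma}}_0$ exactly as you say (the finiteness of $\sup_{\theta\in\Theta}|{\bf{\Sigma}}(\theta)^{-1}-{\bf{\Sigma}}(\theta_0)^{-1}|$ being guaranteed by compactness of $\Theta$ and the standing positive-definiteness assumptions on ${\bf{\Sigma}}^{\theta}_{\delta\delta}$ and ${\bf{\Sigma}}^{\theta}_{\varepsilon\varepsilon}$), and the final chain $\chi|\hat\theta_n-\theta_0|^2\leq-\mathbb{Y}(\hat\theta_n)\leq\mathbb{Y}_n(\hat\theta_n)-\mathbb{Y}(\hat\theta_n)\leq\sup_{\theta}|\mathbb{Y}_n(\theta)-\mathbb{Y}(\theta)|$ is a valid use of {\bf{[C1]}} (a) together with $\mathbb{Y}_n(\hat\theta_n)\geq\mathbb{Y}_n(\theta_0)=0$. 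No gap; you have simply supplied the details the paper outsources.
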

\begin{proof}
    See Lemma $2$ in Kusano and Uchida \cite{Kusano(2024c)}.
\end{proof}
\begin{lemma}\label{Zine}
Under {\bf{[A]}} and {\bf{[C1]}}, for all $L>0$, there exists $C_L>0$ such that
\begin{align*}
    \sup_{n\in\mathbb{N}}{\bf{P}}\left(\sup_{u\in {\mathbb{V}}_n(r)}\mathbb{Z}_n(u)\geq e^{-r}\right)\leq\frac{C_L}{r^L}
\end{align*}
for all $r>0$.
\end{lemma}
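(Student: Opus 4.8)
The plan is to exploit the explicit Gaussian form of the quasi-Hamiltonian, so that the whole estimate collapses to a single tail bound for the $\sqrt n$-normalized fluctuation of $\mathbb{Q}_{\mathbb{XX}}$. Writing $\theta=\theta_0+n^{-1/2}u$ and recalling that $\mathbb{H}_n(\theta)=-\tfrac n2\log\det{\bf \Sigma}(\theta)-\tfrac n2{\bf \Sigma}(\theta)^{-1}[\mathbb{Q}_{\mathbb{XX}}]$, I would first record the exact identity
\begin{align*}
    \log\mathbb{Z}_n(u)=\mathbb{H}_n(\theta)-\mathbb{H}_n(\theta_0)=n\mathbb{Y}(\theta)+\mathcal{E}_n(\theta),\qquad \mathcal{E}_n(\theta):=-\frac n2\bigl({\bf \Sigma}(\theta)^{-1}-{\bf \Sigma}_0^{-1}\bigr)\bigl[\mathbb{Q}_{\mathbb{XX}}-{\bf \Sigma}_0\bigr],
\end{align*}
which holds because ${\bf \Sigma}(\theta_0)={\bf \Sigma}_0$ and because the log-determinant contributions in $\mathbb{H}_n(\theta)-\mathbb{H}_n(\theta_0)$ and in $n\mathbb{Y}(\theta)$ cancel exactly, leaving only the term $\mathcal{E}_n$ driven by $\mathbb{Q}_{\mathbb{XX}}-{\bf \Sigma}_0$. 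This is the decomposition that makes the present quadratic-in-${\bf \Sigma}^{-1}$ model cleaner than the generic quasi-likelihood setting, since no third-order Taylor remainder intervenes.

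Next I would turn the two factors into a quadratic bound in $|u|$. By {\bf{[C1]}} (a), $n\mathbb{Y}(\theta)\leq-n\chi|\theta-\theta_0|^2=-\chi|u|^2$. Since $\theta\mapsto{\bf \Sigma}(\theta)$ is smooth with ${\bf \Sigma}(\theta)>0$ on the compact set $\Theta$, the map $\theta\mapsto{\bf \Sigma}(\theta)^{-1}$ is Lipschitz, giving $|{\bf \Sigma}(\theta)^{-1}-{\bf \Sigma}_0^{-1}|\leq C|\theta-\theta_0|=Cn^{-1/2}|u|$ for all $u\in\mathbb{U}_n(\theta_0)$. Setting $\mathbb{G}_n:=\sqrt n\,|\mathbb{Q}_{\mathbb{XX}}-{\bf \Sigma}_0|$ and using $|A[B]|\leq|A|\,|B|$, I obtain $|\mathcal{E}_n(\theta)|\leq\tfrac C2|u|\,\mathbb{G}_n$ uniformly in $u$. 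Young's inequality then yields, uniformly over $u\in\mathbb{U}_n(\theta_0)$,
\begin{align*}
    \log\mathbb{Z}_n(u)\leq-\chi|u|^2+\frac C2|u|\,\mathbb{G}_n\leq-\frac\chi2|u|^2+\frac{C^2}{8\chi}\mathbb{G}_n^2.
\end{align*}

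On $\mathbb{V}_n(r)$ we have $|u|\geq r$, so $\sup_{u\in\mathbb{V}_n(r)}\log\mathbb{Z}_n(u)\leq-\tfrac\chi2 r^2+\tfrac{C^2}{8\chi}\mathbb{G}_n^2$. Hence the event $\{\sup_{u\in\mathbb{V}_n(r)}\mathbb{Z}_n(u)\geq e^{-r}\}$ forces $\tfrac{C^2}{8\chi}\mathbb{G}_n^2\geq\tfrac\chi2 r^2-r$; choosing $r_0=4/\chi$ so that $\tfrac\chi2 r^2-r\geq\tfrac\chi4 r^2$ for $r\geq r_0$, this event is contained in $\{\mathbb{G}_n\geq c_1 r\}$ with $c_1=\sqrt2\,\chi/C$. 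Markov's inequality with the $2L$-th moment then gives ${\bf{P}}(\mathbb{G}_n\geq c_1 r)\leq(c_1 r)^{-2L}{\bf{E}}[\mathbb{G}_n^{2L}]$, so that the desired bound $C_L/r^L$ follows for $r\geq r_0$ (for $r<r_0$ the probability is at most $1\leq(r_0/r)^L$, and the constant is absorbed), provided $\sup_{n}{\bf{E}}[\mathbb{G}_n^{2L}]<\infty$ for every $L$.

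The main obstacle is precisely this uniform moment bound $\sup_n{\bf{E}}[\mathbb{G}_n^{2L}]<\infty$ on the normalized realized-covariance deviation; everything else is elementary given the identity above and {\bf{[C1]}} (a). Because every diffusion coefficient ${\bf{S}}_{j,0}$ is constant, $\mathbb{X}_t$ has the deterministic quadratic covariation ${\bf \Sigma}_0 t$, and $\mathbb{Q}_{\mathbb{XX}}-{\bf \Sigma}_0$ splits into a Wishart-type martingale fluctuation of the Wiener increments plus drift and cross terms. I would bound each piece by the Burkholder--Davis--Gundy inequality together with the uniform moment bounds {\bf{[A]}} (ii) on the latent factors: the leading martingale term is exactly of order $n^{-1/2}$ with all moments finite, while the drift and cross contributions are of strictly smaller order. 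Assembling these estimates—either directly or by citing the corresponding moment bounds underlying $\mathbb{Q}_{\mathbb{XX}}\stackrel{p}{\to}{\bf \Sigma}_0$ in Kusano and Uchida—supplies the required control and completes the proof.
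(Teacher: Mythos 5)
Your argument is correct, and it takes a genuinely different route from the paper, whose entire proof is a citation of Lemma 8 in Kusano and Uchida (2024c) — a bound obtained there through the polynomial-type large deviation machinery of Yoshida (2011), i.e.\ by verifying moment conditions on the score and observed information and a nondegeneracy/separation condition on the limiting contrast. You instead exploit the fact that $\mathbb{H}_n(\theta)$ depends on the data only through $\mathbb{Q}_{\mathbb{XX}}$ and is affine in ${\bf{\Sigma}}(\theta)^{-1}$: the identity $\log\mathbb{Z}_n(u)=n\mathbb{Y}(\theta)-\tfrac n2({\bf{\Sigma}}(\theta)^{-1}-{\bf{\Sigma}}_0^{-1})[\mathbb{Q}_{\mathbb{XX}}-{\bf{\Sigma}}_0]$ is exact (using ${\bf{\Sigma}}(\theta_0)={\bf{\Sigma}}_0$ from correct specification), so no Taylor remainder or chaining over $u$ is needed, and with {\bf{[C1]}} (a), the uniform Lipschitz bound on $\theta\mapsto{\bf{\Sigma}}(\theta)^{-1}$ (valid since ${\bf{\Sigma}}(\theta)$ is uniformly positive definite on the compact $\Theta$), and Young's inequality the whole statement collapses to a single tail bound for $\mathbb{G}_n=\sqrt n\,|\mathbb{Q}_{\mathbb{XX}}-{\bf{\Sigma}}_0|$. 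This buys a short, self-contained and elementary proof, at the price of being tied to this specific quasi-likelihood structure, whereas the QLA route is generic. The one step you leave as a sketch, $\sup_n{\bf{E}}[\mathbb{G}_n^{2L}]<\infty$ for every $L$, is exactly the right missing ingredient and your outline for it is sound: with constant diffusion coefficients the martingale part of $\mathbb{Q}_{\mathbb{XX}}-{\bf{\Sigma}}_0$ is a normalized sum of $n$ independent centered terms of size $O_{L^p}(h_n)$, hence $O_{L^p}(n^{-1/2})$ by Burkholder--Davis--Gundy/Rosenthal, while the drift and cross terms are $O_{L^p}(h_n)$ and $O_{L^p}(h_n^{1/2}n^{-1/2})$ respectively, using the Lipschitz (hence linear-growth) drifts and the uniform moment bounds in {\bf{[A]}} (ii); to make the write-up complete you should either carry out this computation or cite the corresponding moment lemma in Kusano and Uchida explicitly. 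Two cosmetic points: the constant $r_0=4/\chi$ should be replaced by $\max(4/\chi,1)$ so that $r^{-2L}\le r^{-L}$ on the range where Markov's inequality is applied, and the trivial bound ${\bf{P}}(\cdot)\le 1\le (r_0/r)^L$ handles $r<r_0$ as you say.
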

\begin{proof}
    See Lemma 8 in Kusano and Uchida \cite{Kusano(2024c)}.
\end{proof}
Eguchi and Masuda \cite{Eguchi(2018)} stated only that the following Lemma \ref{A3} can be proven using the proof of Theorem 6 in Yoshida \cite{Yoshida(2011)}. However, we provide the detailed proof of the following Lemma \ref{A3} since we consider the proof to be non-trivial.
\begin{lemma}\label{A3}
Under {\bf{[A]}} and {\bf{[C1]}}, for all $\varepsilon>0$, there exists $r>0$ such that 
\begin{align*}
    \sup_{n\in\mathbb{N}}{\bf{P}}\Biggl(\int_{|u|\geq r}\mathbb{Z}_n(u) 
    \dd u> \varepsilon\Biggr)<\varepsilon.
\end{align*}
\end{lemma}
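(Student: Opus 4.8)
The plan is to run the standard polynomial-type large-deviation argument, decomposing the tail integration region into spherical shells and balancing the exponential decay supplied by Lemma \ref{Zine} against the polynomial growth of the shell volumes. For an integer $k\ge 1$, set $A_k=\{u\in\mathbb{U}_n(\theta_0):k\le |u|<k+1\}$, so that $A_k\subseteq\mathbb{V}_n(k)$. Each $A_k$ is contained in a $q$-dimensional spherical shell, so its Lebesgue measure is at most $v_q\bigl((k+1)^q-k^q\bigr)\le C_q k^{q-1}$, where $v_q$ is the volume of the unit ball in $\mathbb{R}^q$; this bound is independent of $n$. Since $\Theta_m$ is compact, $\mathbb{U}_n(\theta_0)$ is bounded and $\mathbb{Z}_n$ is a.s.\ continuous and bounded on it, so all integrals below are finite.

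First I would control the integral on a good event. Fix an exponent $L>1$ and, for each integer $k\ge 1$, define $E_k=\bigl\{\sup_{u\in\mathbb{V}_n(k)}\mathbb{Z}_n(u)<e^{-k}\bigr\}$. On $E_k$ one has $\mathbb{Z}_n(u)<e^{-k}$ for every $u$ with $|u|\ge k$, in particular for every $u\in A_k$. Writing $G_r=\bigcap_{k\ge\lfloor r\rfloor}E_k$, on $G_r$ we obtain
\begin{align*}
\int_{|u|\ge r}\mathbb{Z}_n(u)\,\dd u
\le \sum_{k\ge\lfloor r\rfloor}\int_{A_k}\mathbb{Z}_n(u)\,\dd u
\le \sum_{k\ge\lfloor r\rfloor}C_q\,k^{q-1}e^{-k}=:\phi(r).
\end{align*}
Because the series $\sum_{k\ge 1}C_q k^{q-1}e^{-k}$ converges, $\phi(r)\to 0$ as $r\to\infty$, and $\phi(r)$ does not depend on $n$.

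Next I would bound the probability of the complement. By Lemma \ref{Zine} applied with the fixed $L$ and with $r=k$, we have ${\bf{P}}(E_k^c)={\bf{P}}\bigl(\sup_{u\in\mathbb{V}_n(k)}\mathbb{Z}_n(u)\ge e^{-k}\bigr)\le C_L k^{-L}$ for every integer $k$ and every $n$. Hence
\begin{align*}
\sup_{n\in\mathbb{N}}{\bf{P}}\bigl(G_r^c\bigr)\le \sum_{k\ge\lfloor r\rfloor}C_L\,k^{-L}=:\psi(r),
\end{align*}
and $\psi(r)\to 0$ as $r\to\infty$ since $L>1$. Given $\varepsilon>0$, choose $r$ so large that $\phi(r)\le\varepsilon$ and $\psi(r)<\varepsilon$. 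On $G_r$ the tail integral is at most $\phi(r)\le\varepsilon$, so $\bigl\{\int_{|u|\ge r}\mathbb{Z}_n(u)\,\dd u>\varepsilon\bigr\}\subseteq G_r^c$, and therefore $\sup_{n}{\bf{P}}\bigl(\int_{|u|\ge r}\mathbb{Z}_n(u)\,\dd u>\varepsilon\bigr)\le\sup_n{\bf{P}}(G_r^c)=\psi(r)<\varepsilon$, which is the claim.

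The argument is essentially bookkeeping once Lemma \ref{Zine} is in hand; the only point requiring care is that the deterministic integral bound $\phi(r)$ and the uniform probability bound $\psi(r)$ must vanish together as $r\to\infty$. The exponential factor $e^{-k}$ from Lemma \ref{Zine} dominates the polynomial shell volume $k^{q-1}$ so that $\phi(r)\to0$, while taking the exponent $L>1$ in Lemma \ref{Zine} guarantees $\psi(r)\to0$; the uniformity in $n$ is inherited directly from the $n$-uniform estimate of Lemma \ref{Zine}.
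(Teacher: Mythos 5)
Your proof is correct and follows essentially the same route as the paper's: decompose the tail region into unit-width shells, invoke Lemma \ref{Zine} on each shell to get the exponential bound $e^{-k}$ off an exceptional event of probability $O(k^{-L})$, control the shell volumes polynomially, and sum both series. Your bookkeeping is a bit cleaner (integer-indexed shells and the direct inclusion $\{\int_{|u|\ge r}\mathbb{Z}_n(u)\,\dd u>\varepsilon\}\subseteq G_r^c$ once $\phi(r)\le\varepsilon$, where the paper instead writes the probability of a deterministic inequality), but the substance is identical.
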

\begin{proof}
Note that
\begin{align}
    \int_{\mathbb{U}_n(\theta_0)\cap\{|u|\geq r\}}\mathbb{Z}_n(u) \dd u=\int_{|u|\geq r}\mathbb{Z}_n(u) \dd u  \label{ZU}
\end{align}
since $\mathbb{Z}_n(u)=0$ for $u\in\mathbb{U}_n(\theta_0)^c$.
Let us consider the following decomposition:
\begin{align}
    \begin{split}
    &\quad\ {\bf{P}}\Biggl(\int_{\mathbb{U}_n(\theta_0)\cap\{|u|\geq r\}}\mathbb{Z}_n(u) \dd u >\varepsilon\Biggr)\\
    &={\bf{P}}\Biggl(\biggl\{\int_{\mathbb{U}_n(\theta_0)\cap\{|u|\geq r\}}\mathbb{Z}_n(u) \dd u >\varepsilon\biggr\}\cap\bigcup_{k=0}^{\infty}\biggl\{\sup_{u\in \mathbb{U}_n(\theta_0)\cap\{|u|\geq r+k\}}\mathbb{Z}_n(u)\geq e^{-(r+k)}\biggr\}\Biggr)\\
    &\qquad\qquad+{\bf{P}}\Biggl(\biggl\{\int_{\mathbb{U}_n(\theta_0)\cap\{|u|\geq r\}}\mathbb{Z}_n(u) \dd u >\varepsilon\biggr\}\cap\bigcap_{k=0}^{\infty}\biggl\{\sup_{u\in\mathbb{U}_n(\theta_0)\cap\{|u|\geq r+k\}}\mathbb{Z}_n(u)< e^{-(r+k)}\biggr\}\Biggr)
\end{split}\label{dec}
\end{align}
for all $r>0$. First, we consider the first term on the right-hand side of (\ref{dec}).
For all $L>0$, it holds from Lemma \ref{Zine} that there exists $C_L>0$ such that 
\begin{align*}
    {\bf{P}}\Biggl(\sup_{u\in \mathbb{U}_n(\theta_0)\cap\{|u|\geq r+k\}}\mathbb{Z}_n(u)\geq e^{-(r+k)}\Biggr)\leq \frac{C_L}{(r+k)^L}
\end{align*}
for all $r>0$ and $k\in\mathbb{N}\cup\{0\}$, so that one gets
\begin{align*}
\begin{split}
    &\quad\ {\bf{P}}\Biggl(\biggl\{\int_{\mathbb{U}_n(\theta_0)\cap\{|u|\geq r\}}\mathbb{Z}_n(u) \dd u >\varepsilon\biggr\}\cap\bigcup_{k=0}^{\infty}\biggl\{\sup_{u\in \mathbb{U}_n(\theta_0)\cap\{|u|\geq r+k\}}\mathbb{Z}_n(u)\geq e^{-(r+k)}\biggr\}\Biggr)\\
    &\leq {\bf{P}}\Biggl(\bigcup_{k=0}^{\infty}\biggl\{\sup_{{u\in \mathbb{U}_n(\theta_0)\cap\{|u|\geq r+k\}}}\mathbb{Z}_n(u)\geq e^{-(r+k)}\biggr\}\Biggr)\\
    &\leq \sum_{k=0}^{\infty}{\bf{P}}\Biggl(\sup_{{u\in \mathbb{U}_n(\theta_0)\cap\{|u|\geq r+k\}}}\mathbb{Z}_n(u)\geq e^{-(r+k)}\Biggr)\\
    &\leq C_L\sum_{k=0}^{\infty}\frac{1}{(r+k)^{L}}.
\end{split}
\end{align*}
Furthermore, we see
\begin{align*}
    \sum_{k=0}^{\infty}\frac{1}{(r+k)^{L}}&=\frac{1}{r^L}+\sum_{k=1}^{\infty}\frac{1}{(r+k)^{L}}
    \leq \frac{1}{r^L}+\int_{0}^{\infty}\frac{1}{(r+x)^{L}}\dd x=\frac{1}{r^L}+\frac{1}{(L-1)r^{L-1}},
\end{align*}
which yields 
\begin{align}
\begin{split}
    &\quad\ {\bf{P}}\Biggl(\biggl\{\int_{\mathbb{U}_n(\theta_0)\cap\{|u|\geq r\}}\mathbb{Z}_n(u) \dd u >\varepsilon\biggr\}\cap\bigcup_{k=0}^{\infty}\biggl\{\sup_{u\in \mathbb{U}_n(\theta_0)\cap\{|u|\geq r+k\}}\mathbb{Z}_n(u)\geq e^{-(r+k)}\biggr\}\Biggr)\\
    &\leq C_L\biggl\{\frac{1}{r^L}+\frac{1}{(L-1)r^{L-1}}\biggr\}.
\end{split}\label{PZ1}
\end{align}
Next, we consider the second term on the right-hand side of (\ref{dec}). Note that
\begin{align*}
    &\quad\ {\bf{P}}\Biggl(\biggl\{\int_{\mathbb{U}_n(\theta_0)\cap\{|u|\geq r\}}\mathbb{Z}_n(u) \dd u >\varepsilon\biggr\}\cap\bigcap_{k=0}^{\infty}\biggl\{\sup_{u\in\mathbb{U}_n(\theta_0)\cap\{|u|\geq r+k\}}\mathbb{Z}_n(u)< e^{-(r+k)}\biggr\}\Biggr)\\
    &={\bf{P}}\Biggl(\biggl\{\sum_{k=0}^{\infty}\int_{\mathbb{U}_n(\theta_0)\cap\{r+k \leq |u|\leq r+k+1\}}\mathbb{Z}_n(u) \dd u>\varepsilon\biggr\}\cap\bigcap_{k=0}^{\infty}\biggl\{\sup_{u\in\mathbb{U}_n(\theta_0)\cap\{|u|\geq r+k\}}\mathbb{Z}_n(u)< e^{-(r+k)}\biggr\}\Biggr)\\
    &\leq {\bf{P}}\Biggl(e^{-r}\sum_{k=0}^{\infty}e^{-k}\int_{\mathbb{U}_n(\theta_0)\cap\{r+k\leq |u|\leq r+k+1\}} \dd u >\varepsilon\Biggr)
\end{align*}
for all $r>0$. It is shown that
\begin{align}
\begin{split}
    \int_{\mathbb{U}_n(\theta_0)\cap\{r+k\leq |u|\leq r+k+1\}} \dd u &\leq \int_{r+k\leq |u|\leq r+k+1} \dd u \\
    &=\int_{0}^{2\pi}\int_{0}^{\pi}\cdots\int_{0}^{\pi}\int_{r+k}^{r+k+1}
    s^{q-1}\prod_{j=1}^{q-2}\sin^{q-j-1}\phi_j \dd s\dd\phi_1\cdots \dd\phi_{q-2}\dd\phi_{q-1}\\
    &\leq \int_{0}^{2\pi}\int_{0}^{\pi}\cdots\int_{0}^{\pi}\int_{r+k}^{r+k+1}
    s^{q-1}\dd s\dd\phi_1\cdots \dd\phi_{q-2}\dd\phi_{q-1}\\
    &\leq C_q\Bigl\{(r+k+1)^{q}-(r+k)^{q}\Bigr\}
\end{split}\label{intu1}
\end{align}
by using the polar coordinates transformation:
\begin{align*}
    u^{(1)}=s\cos\phi_1,\quad
    u^{(i)}=s\cos\phi_{i}\prod_{j=1}^{i-1}\sin\phi_{j}\ (i=2,\cdots, q-1),\quad u^{(q)}=s\prod_{j=1}^{q-1}\sin\phi_{j},
\end{align*}
where $s\geq 0$, $0\leq\phi_i\leq\pi$ for $i=1,\cdots,q-2$ and $0\leq\phi_{q-1}\leq2\pi$.
Moreover, we see
\begin{align*}
    (r+k+1)^{q}-(r+k)^{q}&=\sum_{i=0}^{q}
    \frac{q!}{i!(q-i)!}
    (r+k)^{i}-(r+k)^{q}\\
    &=\sum_{i=0}^{q-1}\frac{q!}{i!(q-i)!}
    (r+k)^{i}\\
    &=\sum_{i=0}^{q-1}\frac{q!}{i!(q-i)!}
    \Biggl\{\sum_{j=0}^i \frac{i!}{j!(i-j)!}r^{j}k^{i-j}\Biggr\}\\
    &=\sum_{i=0}^{q-1}\sum_{j=0}^{i}\frac{q!}{j!(i-j)!(q-i)!}r^{j}k^{i-j}\\
    &=\sum_{i=1}^{q-1}\sum_{j=0}^{i-1}\frac{q!}{j!(i-j)!(q-i)!}r^{j}k^{i-j}+\sum_{i=0}^{q-1}\frac{q!}{i!(q-i)!}r^{i}
\end{align*}
for all $r>0$ and $k\in\mathbb{N}\cup\{0\}$, so that it holds from (\ref{intu1}) that
\begin{align}
\begin{split}
   &\quad\ e^{-r}\sum_{k=0}^{\infty}e^{-k}\int_{\mathbb{U}_n(\theta_0)\cap\{r+k\leq |u|\leq r+k+1\}} \dd u \\
   &\leq C_q e^{-r} \sum_{i=1}^{q-1}\sum_{j=0}^{i-1}\frac{q!}{j!(i-j)!(q-i)!}r^{j}\sum_{k=1}^{\infty}e^{-k}k^{i-j}+C_q e^{-r}\sum_{i=0}^{q-1}\frac{q!}{i!(q-i)!}r^{i}\sum_{k=0}^{\infty}e^{-k}.
\end{split}\label{intu2}
\end{align}
It follows from d'Alembert's ratio test that
\begin{align*}
    \max_{i=1,\cdots,q-1,\ j=0,\cdots,i-1}\left(\sum_{k=1}^{\infty}e^{-k}k^{i-j}\right)<\infty
\end{align*}
since one gets
\begin{align*}
    \biggl|\frac{e^{-{(k+1)}}(k+1)^{i-j}}{e^{-k}k^{i-j}}\biggr|=e^{-1}\biggl(1+\frac{1}{k}\biggr)^{i-j}\longrightarrow e^{-1}<1
\end{align*}
as $k\longrightarrow\infty$ for $i=1,\cdots,q-1$ and $j=0,\cdots,i-1$. Furthermore, we have
\begin{align*}
    \sum_{k=0}^{\infty}e^{-k}<\infty,
\end{align*}
so that it holds from (\ref{intu2}) that
\begin{align*}
    e^{-r}\sum_{k=0}^{\infty}e^{-k}\int_{\mathbb{U}_n(\theta_0)\cap\{r+k\leq |u|\leq r+k+1\}} \dd u &\leq C_q\sum_{i=1}^{q-1}\sum_{j=0}^{i-1}\frac{q!}{j!(i-j)!(q-i)!} e^{-r}r^{j}+C_q\sum_{i=0}^{q-1}\frac{q!}{i!(q-i)!}e^{-r}r^{i}\\
    &\leq C_q\sum_{i=0}^{q-1}\sum_{j=0}^{i}\frac{q!}{j!(i-j)!(q-i)!} e^{-r}r^{j} 
\end{align*}
for all $r>0$, which yields 
\begin{align}
\begin{split}
    &\quad\ {\bf{P}}\Biggl(\biggl\{\int_{\mathbb{U}_n(\theta_0)\cap\{|u|\geq r\}}\mathbb{Z}_n(u) \dd u >\varepsilon\biggr\}\cap\bigcap_{k=0}^{\infty}\biggl\{\sup_{u\in\mathbb{U}_n(\theta_0)\cap\{|u|\geq r+k\}}\mathbb{Z}_n(u)< e^{-(r+k)}\biggr\}\Biggr)\\
    &\leq {\bf{P}}\Biggl(C_q\sum_{i=0}^{q-1}\sum_{j=0}^{i}\frac{q!}{j!(i-j)!(q-i)!} e^{-r}r^{j} >\varepsilon\Biggr).
\end{split}\label{PZ2}
\end{align}
Hence, we see from (\ref{ZU})-(\ref{PZ1}) and (\ref{PZ2}) that
\begin{align}
    \begin{split}
    &\quad\ \sup_{n\in\mathbb{N}}{\bf{P}}\left(\int_{|u|\geq r}\mathbb{Z}_n(u) \dd u >\varepsilon\right)\\
    &\leq C_L\biggl\{\frac{1}{r^L}+\frac{1}{(L-1)r^{L-1}}\biggr\}+{\bf{P}}\Biggl(C_q\sum_{i=0}^{q-1}\sum_{j=0}^{i}\frac{q!}{j!(i-j)!(q-i)!} e^{-r}r^{j}>\varepsilon\Biggr)
    \end{split} \label{supP}
\end{align}
for all $r>0$. Since $e^{-r}r^{j}\longrightarrow 0$ as $r\longrightarrow \infty$ for any $j\in\mathbb{N}\cup\{0\}$, one gets
\begin{align*}
    C_L\biggl\{\frac{1}{r^L}+\frac{1}{(L-1)r^{L-1}}\biggr\}\longrightarrow 0,\quad
    C_q\sum_{i=0}^{q-1}\sum_{j=0}^{i}\frac{q!}{j!(i-j)!(q-i)!} e^{-r}r^{j}\longrightarrow 0
\end{align*}
as $r\longrightarrow\infty$. Therefore, there exists $r>0$ such that
\begin{align*}
    C_L\biggl\{\frac{1}{r^L}+\frac{1}{(L-1)r^{L-1}}\biggr\}<\varepsilon,\quad
    C_q\sum_{i=0}^{q-1}\sum_{j=0}^{i}\frac{q!}{j!(i-j)!(q-i)!} e^{-r}r^{j}<\varepsilon,
\end{align*}
so that it follows from (\ref{supP}) that
\begin{align*}
    \sup_{n\in\mathbb{N}}{\bf{P}}\left(\int_{|u|\geq r}\mathbb{Z}_n(u) \dd u >\varepsilon\right)<\varepsilon.
\end{align*}
\end{proof}
\begin{proof}[{\bf{Proof of Theorem \ref{Hexpansion}}}]
It is enough to check the regularity conditions of Theorem 3.7 in Eguchi and Masuda \cite{Eguchi(2018)} as $K=1$ and $A_{n}(\theta_0)=a_{1,n}(\theta_0)=n^{-\frac{1}{2}}$. Note that ${\bf{\Gamma}}_0$ is a positive definite matrix under {\bf{[C1]}} ({\rm{b}}); see Lemma 35 in Kusano and Uchida \cite{Kusano(2023)} for the proof.
Lemmas \ref{A1} and \ref{A3} satisfy Assumptions 3.1 and 3.3 in Eguchi and Masuda \cite{Eguchi(2018)} respectively. [{\bf{B}}] yields Assumption 3.2 in Eguchi and Masuda \cite{Eguchi(2018)}. Furthermore, it holds from Lemma \ref{thetahat} that $\hat{\theta}_n\stackrel{p}{\longrightarrow}\theta_0$, which completes the proof.
\end{proof}
\begin{lemma}\label{Huni}
Under {[\bf{A}]}, as $n\longrightarrow\infty$,
\begin{align*}
    n^{-1}\mathbb{H}_{m,n}(\theta_m)\stackrel{p}{\longrightarrow} \mathbb{H}_{m,0}(\theta_m)\quad \mbox{uniformly in }\theta_m
\end{align*}
for $m\in\{1,\cdots,M\}$.
\end{lemma}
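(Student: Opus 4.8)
The plan is to reduce the normalized quasi-log likelihood to an explicit affine functional of the realized covariation matrix $\mathbb{Q}_{\mathbb{XX}}$ and then exploit its convergence. First I would substitute the definition of $\mathbb{L}_{m,n}$ into $\mathbb{H}_{m,n}(\theta_m)=\log(2\pi h_n)^{\frac{np}{2}}\mathbb{L}_{m,n}(\theta_m)$. The factor $(2\pi h_n)^{\frac{np}{2}}$ cancels the corresponding normalizing constants, leaving
\begin{align*}
    \mathbb{H}_{m,n}(\theta_m)=-\frac{n}{2}\log\det{\bf{\Sigma}}_m(\theta_m)-\frac{1}{2h_n}\sum_{i=1}^n{\bf{\Sigma}}_m(\theta_m)^{-1}\Bigl[(\Delta_i\mathbb{X})^{\otimes 2}\Bigr].
\end{align*}
Using the linearity of $A\mapsto{\bf{\Sigma}}_m(\theta_m)^{-1}[A]=\tr({\bf{\Sigma}}_m(\theta_m)^{-1}A^{\top})$, the identity $\sum_{i=1}^n(\Delta_i\mathbb{X})^{\otimes 2}=T\mathbb{Q}_{\mathbb{XX}}$, and $T/h_n=n$, I obtain
\begin{align*}
    n^{-1}\mathbb{H}_{m,n}(\theta_m)=-\frac{1}{2}\log\det{\bf{\Sigma}}_m(\theta_m)-\frac{1}{2}{\bf{\Sigma}}_m(\theta_m)^{-1}\bigl[\mathbb{Q}_{\mathbb{XX}}\bigr].
\end{align*}

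Comparing this with the definition of $\mathbb{H}_{m,0}$ cancels the log-determinant term and leaves the difference
\begin{align*}
    n^{-1}\mathbb{H}_{m,n}(\theta_m)-\mathbb{H}_{m,0}(\theta_m)=-\frac{1}{2}{\bf{\Sigma}}_m(\theta_m)^{-1}\bigl[\mathbb{Q}_{\mathbb{XX}}-{\bf{\Sigma}}_0\bigr].
\end{align*}
Next I would bound the supremum over $\Theta_m$ by the trace version of the Cauchy--Schwarz inequality, $|A[B]|=|\tr(AB^{\top})|\leq|A|\,|B|$, which gives
\begin{align*}
    \sup_{\theta_m\in\Theta_m}\bigl|n^{-1}\mathbb{H}_{m,n}(\theta_m)-\mathbb{H}_{m,0}(\theta_m)\bigr|\leq\frac{1}{2}\Bigl(\sup_{\theta_m\in\Theta_m}\bigl|{\bf{\Sigma}}_m(\theta_m)^{-1}\bigr|\Bigr)\bigl|\mathbb{Q}_{\mathbb{XX}}-{\bf{\Sigma}}_0\bigr|.
\end{align*}

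It then remains to argue that the deterministic factor $\sup_{\theta_m\in\Theta_m}|{\bf{\Sigma}}_m(\theta_m)^{-1}|$ is finite. This is where the structural hypotheses enter: $\theta_m\mapsto{\bf{\Sigma}}_m(\theta_m)$ is continuous, each ${\bf{\Sigma}}_m(\theta_m)$ is positive definite (owing to the positive definiteness of ${\bf{\Sigma}}^{\theta}_{\delta\delta,m}$ and ${\bf{\Sigma}}^{\theta}_{\varepsilon\varepsilon,m}$ and nonsingularity of ${\bf{\Psi}}^{\theta}_m$), and $\Theta_m$ is compact. Hence the smallest eigenvalue of ${\bf{\Sigma}}_m(\theta_m)$ is bounded away from zero on $\Theta_m$, so $\theta_m\mapsto|{\bf{\Sigma}}_m(\theta_m)^{-1}|$ is continuous on a compact set and attains a finite maximum.

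Finally, Theorem $1$ in Kusano and Uchida \cite{Kusano(JJSD)} gives $\mathbb{Q}_{\mathbb{XX}}\stackrel{p}{\longrightarrow}{\bf{\Sigma}}_0$, so by the continuous mapping theorem $|\mathbb{Q}_{\mathbb{XX}}-{\bf{\Sigma}}_0|\stackrel{p}{\longrightarrow}0$, and the right-hand side of the displayed bound tends to $0$ in probability; this establishes the uniform convergence. I do not expect a genuine obstacle here, since the difference collapses to a single linear term in $\mathbb{Q}_{\mathbb{XX}}-{\bf{\Sigma}}_0$ with a parameter-free stochastic part; the only point requiring care is the uniform lower bound on the eigenvalues of ${\bf{\Sigma}}_m(\theta_m)$, which guarantees the boundedness of the inverse over $\Theta_m$.
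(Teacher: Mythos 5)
Your proposal is correct and follows essentially the same route as the paper: rewrite $n^{-1}\mathbb{H}_{m,n}(\theta_m)$ as an affine functional of $\mathbb{Q}_{\mathbb{XX}}$, cancel the log-determinant against $\mathbb{H}_{m,0}$, bound the supremum of the difference by $\sup_{\theta_m}|{\bf{\Sigma}}_m(\theta_m)^{-1}|\,|\mathbb{Q}_{\mathbb{XX}}-{\bf{\Sigma}}_0|$ via Cauchy--Schwarz and compactness, and invoke $\mathbb{Q}_{\mathbb{XX}}\stackrel{p}{\longrightarrow}{\bf{\Sigma}}_0$ from Theorem 1 of Kusano and Uchida. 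Your added justification that the smallest eigenvalue of ${\bf{\Sigma}}_m(\theta_m)$ is bounded away from zero on the compact $\Theta_m$ is a detail the paper leaves implicit.
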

\begin{proof}
Since
\begin{align*}
    n^{-1}\mathbb{H}_{m,n}(\theta_m)
    &=-2^{-1}{\bf{\Sigma}}_m(\theta_m)^{-1}\bigl[\mathbb{Q}_{\mathbb{XX}}\bigr]-2^{-1}\log\det{\bf{\Sigma}}_m(\theta_m)
\end{align*}
and
\begin{align*}
    \mathbb{H}_{m,0}(\theta_m)
    &=-2^{-1}{\bf{\Sigma}}_m
    (\theta_m)^{-1}\bigl[{\bf{\Sigma}}_0\bigr]-2^{-1}
    \log\det{\bf{\Sigma}}_m(\theta_m),
\end{align*}
it follows from Theorem $1$ in Kusano and Uchida \cite{Kusano(JJSD)} and the compactness of $\Theta$ that
\begin{align*}
    \sup_{\theta_m\in\Theta_m}\Bigl|n^{-1}\mathbb{H}_{m,n}(\theta_m)-\mathbb{H}_{m,0}(\theta_m)\Bigr|
    &\leq \sup_{\theta_m\in\Theta_m}\Bigl|{\bf{\Sigma}}_m(\theta_m)^{-1}\bigl[\mathbb{Q}_{\mathbb{XX}}-{\bf{\Sigma}}_0\bigr]\Bigr|\\
    &\leq \sup_{\theta_m\in\Theta_m}\Bigl|{\bf{\Sigma}}_m(\theta_m)^{-1}\Bigr|\bigl|\mathbb{Q}_{\mathbb{XX}}-{\bf{\Sigma}}_0\bigr|\stackrel{p}{\longrightarrow}0
\end{align*}
for $m\in\{1,\cdots,M\}$.
\end{proof}
\begin{lemma}\label{thetaasym}
Under {[\bf{A}]} and {[\bf{C1}]},
\begin{align}
    \log\det{\tilde{\bf{\Gamma}}_{m,n}(\hat{\theta}_{m,n})}\stackrel{p}{\longrightarrow}\log\det{\bf{\Gamma}}_{m,0} \label{logdet}
\end{align}
and 
\begin{align}
    \sqrt{n}(\hat{\theta}_{m,n}-\theta_{m,0})\stackrel{d}{\longrightarrow} N_{q_m}\Bigl(0,{\bf{\Gamma}}^{-1}_{m,0}\Bigr) \label{asymtheta}
\end{align}
for $m\in\mathfrak{M}$.
\end{lemma}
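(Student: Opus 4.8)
The plan is to prove the asymptotic normality \eqref{asymtheta} by the standard argument for $M$-estimators and then obtain \eqref{logdet} as a consequence. Throughout I restrict to $m\in\mathfrak{M}$, so that ${\bf{\Sigma}}(\theta_0)={\bf{\Sigma}}_0$ and $\theta_0\in\Int(\Theta)$. First I would use the consistency $\hat\theta_n\stackrel{p}{\to}\theta_0$ from Lemma \ref{thetahat} together with $\theta_0\in\Int(\Theta)$ to conclude that the event on which $\hat\theta_n$ is an interior maximiser has probability tending to $1$; on this event the first-order condition $\partial_\theta\mathbb{H}_n(\hat\theta_n)=0$ holds. A componentwise Taylor expansion of the score around $\theta_0$, with row-dependent intermediate points $\tilde\theta_n$ on the segment joining $\hat\theta_n$ and $\theta_0$, then gives
\[
0=\frac{1}{\sqrt n}\partial_\theta\mathbb{H}_n(\theta_0)+\Bigl(\frac{1}{n}\partial^2_\theta\mathbb{H}_n(\tilde\theta_n)\Bigr)\sqrt n(\hat\theta_n-\theta_0),
\]
so that, once the Hessian factor is shown invertible with high probability,
\[
\sqrt n(\hat\theta_n-\theta_0)=\Bigl(-\frac{1}{n}\partial^2_\theta\mathbb{H}_n(\tilde\theta_n)\Bigr)^{-1}\frac{1}{\sqrt n}\partial_\theta\mathbb{H}_n(\theta_0).
\]

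Two ingredients remain. For the Hessian I would show $-n^{-1}\partial^2_\theta\mathbb{H}_n(\tilde\theta_n)\stackrel{p}{\to}{\bf{\Gamma}}_0$: its value at $\theta_0$ converges to ${\bf{\Gamma}}_0>0$ by \eqref{gamma}, and a further Taylor step bounds the difference by $\sup_\theta|n^{-1}\partial^3_\theta\mathbb{H}_n(\theta)|\,|\tilde\theta_n-\theta_0|\le\bigl(\sqrt n\,o_p(1)\bigr)\,o_p(1)=o_p(1)$ via \eqref{supH} and consistency. For the score, the key observation is that $n^{-1}\partial_\theta\mathbb{H}_n(\theta_0)$ is an \emph{exactly} linear function of $\mathbb{Q}_{\mathbb{XX}}-{\bf{\Sigma}}_0$: using $\partial_{\theta^{(j)}}\log\det{\bf{\Sigma}}=\tr({\bf{\Sigma}}^{-1}\partial_{\theta^{(j)}}{\bf{\Sigma}})$ and $\partial_{\theta^{(j)}}{\bf{\Sigma}}^{-1}=-{\bf{\Sigma}}^{-1}(\partial_{\theta^{(j)}}{\bf{\Sigma}}){\bf{\Sigma}}^{-1}$ together with ${\bf{\Sigma}}(\theta_0)={\bf{\Sigma}}_0$, the bias term cancels and
\[
n^{-1}\partial_{\theta^{(j)}}\mathbb{H}_n(\theta_0)=2^{-1}\tr\bigl({\bf{\Sigma}}_0^{-1}(\partial_{\theta^{(j)}}{\bf{\Sigma}}){\bf{\Sigma}}_0^{-1}(\mathbb{Q}_{\mathbb{XX}}-{\bf{\Sigma}}_0)\bigr).
\]
Writing $\vec\bigl({\bf{\Sigma}}_0^{-1}(\partial_{\theta^{(j)}}{\bf{\Sigma}}){\bf{\Sigma}}_0^{-1}\bigr)=({\bf{\Sigma}}_0^{-1}\otimes{\bf{\Sigma}}_0^{-1})\vec(\partial_{\theta^{(j)}}{\bf{\Sigma}})$ and invoking the Magnus--Neudecker identity ${\bf{W}}_0^{-1}=2^{-1}\mathbb{D}_p^{\top}({\bf{\Sigma}}_0^{-1}\otimes{\bf{\Sigma}}_0^{-1})\mathbb{D}_p$, the linear map is identified as $\Delta_0^{\top}{\bf{W}}_0^{-1}$, i.e. $n^{-1/2}\partial_\theta\mathbb{H}_n(\theta_0)=\Delta_0^{\top}{\bf{W}}_0^{-1}\,\sqrt n(\vech\mathbb{Q}_{\mathbb{XX}}-\vech{\bf{\Sigma}}_0)$. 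Feeding in the central limit theorem $\sqrt n(\vech\mathbb{Q}_{\mathbb{XX}}-\vech{\bf{\Sigma}}_0)\stackrel{d}{\to}N(0,{\bf{W}}_0)$ for the realised quadratic variation then yields, by the delta method, $n^{-1/2}\partial_\theta\mathbb{H}_n(\theta_0)\stackrel{d}{\to}N_q\bigl(0,\Delta_0^{\top}{\bf{W}}_0^{-1}\Delta_0\bigr)=N_q(0,{\bf{\Gamma}}_0)$. Combining the two ingredients with Slutsky's lemma gives $\sqrt n(\hat\theta_n-\theta_0)\stackrel{d}{\to}{\bf{\Gamma}}_0^{-1}N_q(0,{\bf{\Gamma}}_0)=N_q(0,{\bf{\Gamma}}_0^{-1})$, which is \eqref{asymtheta}.

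For \eqref{logdet} I would rerun the Hessian argument but evaluated directly at $\hat\theta_n$: since \eqref{asymtheta} gives $|\hat\theta_n-\theta_0|=O_p(n^{-1/2})$, the same Taylor bound with \eqref{supH} yields $-n^{-1}\partial^2_\theta\mathbb{H}_n(\hat\theta_n)=-n^{-1}\partial^2_\theta\mathbb{H}_n(\theta_0)+o_p(1)\stackrel{p}{\to}{\bf{\Gamma}}_0$. On $J_{m,n}$ we have $\tilde{\bf{\Gamma}}_{m,n}(\hat\theta_n)=-n^{-1}\partial^2_\theta\mathbb{H}_n(\hat\theta_n)$, and since ${\bf{\Gamma}}_0>0$ the map $A\mapsto\log\det A$ is continuous at ${\bf{\Gamma}}_0$; the continuous mapping theorem therefore gives $\log\det\tilde{\bf{\Gamma}}_{m,n}(\hat\theta_n)\stackrel{p}{\to}\log\det{\bf{\Gamma}}_0$ on $J_{m,n}$. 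Because ${\bf{P}}(J_{m,n})\to1$ and on $J_{m,n}^c$ the quantity equals $\log\det\mathbb{I}_q=0$, the contribution of $J_{m,n}^c$ is asymptotically negligible, establishing \eqref{logdet}.

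The main obstacle is the score limit: the downstream steps are essentially bookkeeping, but one must (i) justify the central limit theorem $\sqrt n(\vech\mathbb{Q}_{\mathbb{XX}}-\vech{\bf{\Sigma}}_0)\stackrel{d}{\to}N(0,{\bf{W}}_0)$ with limiting covariance exactly ${\bf{W}}_0=2\mathbb{D}_p^{+}({\bf{\Sigma}}_0\otimes{\bf{\Sigma}}_0)\mathbb{D}_p^{+\top}$ under \textbf{[A]} (only the law of large numbers $\mathbb{Q}_{\mathbb{XX}}\stackrel{p}{\to}{\bf{\Sigma}}_0$ is available within the excerpt, so the fluctuation result from Kusano and Uchida \cite{Kusano(JJSD)} must be imported), and (ii) verify the information identity that the score covariance $\Delta_0^{\top}{\bf{W}}_0^{-1}\Delta_0$ coincides with the Hessian limit ${\bf{\Gamma}}_0$ of \eqref{gamma}. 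The exact cancellation of the bias term—so that the score is a bias-free linear functional of $\mathbb{Q}_{\mathbb{XX}}-{\bf{\Sigma}}_0$—is precisely what forces the two copies of ${\bf{\Gamma}}_0$ to agree and is the crux of the argument.
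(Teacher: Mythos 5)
Your proposal is correct in substance but takes a genuinely different, more self-contained route than the paper. For \eqref{asymtheta} the paper simply cites Lemma 2 of Kusano and Uchida \cite{Kusano(2024c)}, whereas you reconstruct the full $M$-estimation argument: first-order condition, Taylor expansion of the score, the exact cancellation making $n^{-1}\partial_{\theta}\mathbb{H}_n(\theta_0)$ a bias-free linear functional $\Delta_0^{\top}{\bf{W}}_0^{-1}\vech(\mathbb{Q}_{\mathbb{XX}}-{\bf{\Sigma}}_0)$, and the information identity $\Delta_0^{\top}{\bf{W}}_0^{-1}{\bf{W}}_0{\bf{W}}_0^{-1}\Delta_0={\bf{\Gamma}}_0$. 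That is all sound, and you correctly identify that the CLT for $\sqrt n(\vech\mathbb{Q}_{\mathbb{XX}}-\vech{\bf{\Sigma}}_0)$ must be imported from \cite{Kusano(JJSD)} — it is not derivable from the law of large numbers alone, so this remains an external input, just relocated. For \eqref{logdet} the paper argues differently: it establishes uniform convergence $n^{-1}\partial^2_{\theta_m}\mathbb{H}_{m,n}(\theta_m)\stackrel{p}{\to}\partial^2_{\theta_m}\mathbb{H}_{m,0}(\theta_m)$ over the compact $\Theta_m$ (in the style of Lemma \ref{Huni}), combines it with continuity of $\partial^2_{\theta_m}\mathbb{H}_{m,0}$ and consistency of $\hat\theta_{m,n}$, and then handles $J_{m,n}^c$ exactly as you do. Your route via third derivatives buys nothing here and only needs consistency, not the rate $O_p(n^{-1/2})$; the paper's route avoids third derivatives entirely.

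One step as written is not valid: in controlling $-n^{-1}\partial^2_{\theta}\mathbb{H}_n(\tilde\theta_n)$ you bound the remainder by $\bigl(\sqrt n\,o_p(1)\bigr)\,o_p(1)$ and declare this $o_p(1)$, but a product of that form need not vanish (take both factors of order $n^{-1/4}$). At that stage of the argument you only know $|\tilde\theta_n-\theta_0|=o_p(1)$ from consistency, not $O_p(n^{-1/2})$, so the diverging factor $\sqrt n$ cannot be absorbed. The fix is immediate: the proof of \eqref{supH} actually shows the stronger statement $\sup_{\theta}\bigl|n^{-1}\partial_{\theta^{(j_1)}}\partial_{\theta^{(j_2)}}\partial_{\theta^{(j_3)}}\mathbb{H}_n(\theta)\bigr|=O_p(1)$ (it is bounded by $|\mathbb{Q}_{\mathbb{XX}}|$ times a deterministic supremum), and $O_p(1)\cdot o_p(1)=o_p(1)$ closes the gap. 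With that repair, and with the quadratic-variation CLT imported as you indicate, your argument goes through.
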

\begin{proof}
First, we will prove (\ref{logdet}). In a similar way to Lemma \ref{Huni}, it is shown that
\begin{align}
    n^{-1}\partial^2_{\theta_m}\mathbb{H}_{m,n}(\theta_m)\stackrel{p}{\longrightarrow}
    \partial^2_{\theta_m}\mathbb{H}_{m,0}(\theta_m)\quad \mbox{\rm{uniformly in}}\ \theta_m \label{H2uni}
\end{align}
for $m\in\mathfrak{M}$.
Since $\partial^2_{\theta_m}\mathbb{H}_{m,0}(\theta_m)$ is continuous in $\theta_m$, the continuous mapping theorem and Lemma \ref{thetahat} show
\begin{align}
    \partial^2_{\theta_m}\mathbb{H}_{m,0}(\hat{\theta}_{m,n})\stackrel{p}{\longrightarrow}\partial^2_{\theta_m}\mathbb{H}_{m,0}(\theta_{m,0})=-{\bf{\Gamma}}_{m,0}\label{H0conv}
\end{align}
for $m\in\mathfrak{M}$. Let 
\begin{align*}
    {\bf{\Gamma}}_{m,n}(\theta_m)=-n^{-1}\partial^2_{\theta_m}\mathbb{H}_{m,n}(\theta_m).
\end{align*}
It holds from (\ref{H2uni}) and (\ref{H0conv}) that
\begin{align*}
    \bigl|{\bf{\Gamma}}_{m,n}(\hat{\theta}_{m,n})-{\bf{\Gamma}}_{m,0}\bigr|&=\Bigl|-n^{-1}\partial^2_{\theta_m}\mathbb{H}_{m,n}(\hat{\theta}_{m,n})+\partial^2_{\theta_m}\mathbb{H}_{m,0}(\hat{\theta}_{m,n})
    -\partial^2_{\theta_m}\mathbb{H}_{m,0}(\hat{\theta}_{m,n})-{\bf{\Gamma}}_{m,0}\Bigr|\\
    &\leq \Bigl|n^{-1}\partial^2_{\theta_m}\mathbb{H}_{m,n}(\hat{\theta}_{m,n})-\partial^2_{\theta_m}\mathbb{H}_{m,0}(\hat{\theta}_{m,n})\Bigr|+\Bigl|
    \partial^2_{\theta_m}\mathbb{H}_{m,0}(\hat{\theta}_{m,n})+{\bf{\Gamma}}_{m,0}\Bigr|\\
    &\leq\sup_{\theta_m\in\Theta_m}
    \Bigl|n^{-1}\partial^2_{\theta_m}\mathbb{H}_{m,n}(\theta_m)-\partial^2_{\theta_m}\mathbb{H}_{m,0}(\theta_m)\Bigr|+\Bigl|
    \partial^2_{\theta_m}\mathbb{H}_{m,0}(\hat{\theta}_{m,n})+{\bf{\Gamma}}_{m,0}\Bigr|\\
    &\stackrel{p}{\longrightarrow} 0
\end{align*}
for $m\in\mathfrak{M}$, so that one gets
\begin{align*}
    {\bf{\Gamma}}_{m,n}(\hat{\theta}_{m,n})\stackrel{p}{\longrightarrow}{\bf{\Gamma}}_{m,0}.
\end{align*}
Note that ${\bf{P}}(J_{m,n})\longrightarrow 1$ as $n\longrightarrow\infty$ for $m\in\mathfrak{M}$ since ${\bf{\Gamma}}_{m,0}$ is a positive definite matrix. 
For all $\varepsilon>0$, we have
\begin{align*}
    {\bf{P}}\Bigl(\bigl|{\tilde{\bf{\Gamma}}}_{m,n}(\hat{\theta}_{m,n})-{\bf{\Gamma}}_{m,0}\bigr|>\varepsilon\Bigr)&\leq {\bf{P}}\biggl(\Bigl\{\bigl|{\tilde{\bf{\Gamma}}}_{m,n}(\hat{\theta}_{m,n})-{\bf{\Gamma}}_{m,0}\bigr|>\varepsilon\Bigr\}\cap J_{m,n}\biggr)\\
    &\qquad+{\bf{P}}\biggl(\Bigl\{\bigl|{\tilde{\bf{\Gamma}}}_{m,n}(\hat{\theta}_{m,n})-{\bf{\Gamma}}_{m,0}\bigr|>\varepsilon\Bigr\}\cap J^c_{m,n}\biggr)\\
    &\leq{\bf{P}}\biggl(\Bigl\{\bigl|{\bf{\Gamma}}_{m,n}(\hat{\theta}_{m,n})-{\bf{\Gamma}}_{m,0}\bigr|>\varepsilon\Bigr\}\cap J_{m,n}\biggr)+{\bf{P}}\Bigl(J_{m,n}^c\Bigr)\\
    &\leq {\bf{P}}\Bigl(\bigl|{\bf{\Gamma}}_{m,n}(\hat{\theta}_{m,n})-{\bf{\Gamma}}_{m,0}\bigr|>\varepsilon\Bigr)+{\bf{P}}\Bigl(J_{m,n}^c\Bigr)\longrightarrow 0
\end{align*}
as $n\longrightarrow\infty$ for $m\in\mathfrak{M}$, which yields 
\begin{align}
    {\tilde{\bf{\Gamma}}}_{m,n}(\hat{\theta}_{m,n})\stackrel{p}{\longrightarrow}{\bf{\Gamma}}_{m,0}. \label{gammaP}
\end{align}
Thus, it follows from the continuous mapping theorem and (\ref{gammaP}) that 
\begin{align*}
    \log\det{\tilde{\bf{\Gamma}}_{m,n}(\hat{\theta}_{m,n})}\stackrel{p}{\longrightarrow}\log\det{\bf{\Gamma}}_{m,0}
\end{align*}
for $m\in\mathfrak{M}$. For the proof of (\ref{asymtheta}), see Lemma $2$ in Kusano and Uchida \cite{Kusano(2024c)}.
\end{proof}
\begin{proof}[{\bf{Proof of Theorem \ref{BICcons1}}}]
In an analogous manner to the proof of Theorem \ref{Hexpansion}, Assumptions 3.1-3.3 in Eguchi and Masuda \cite{Eguchi(2018)} are satisfied. The proof of Theorem 5.1 (i) in Eguchi and Masuda \cite{Eguchi(2018)} is valid when $-n^{-1}\mathbb{H}_{m,n}(\hat{\theta}_{m,n})$ is a positive definite matrix. On the other hand, Eguchi and Masuda \cite{Eguchi(2018)} implies that ${\rm{QBIC}}_{1,n}$ and ${\rm{QBIC}}_{2,n}$ have the model selection consistency if $\tilde{\bf{\Gamma}}_{m,n}(\hat{\theta}_{m,n})\stackrel{p}{\longrightarrow}{\bf{\Gamma}}_{m,0}$ for $m\in\mathfrak{M}$. Indeed, from Lemma \ref{thetaasym}, we can obtain the result in the same way as the proof of Theorem 5.1 (i) in Eguchi and Masuda \cite{Eguchi(2018)}. See also Appendix \ref{Appendixproof}.
\end{proof}
\begin{proof}[{\bf{Proof of Theorem \ref{BICcons2}}}]
Set
\begin{align*}
    f({\bf{\Sigma}})=-2^{-1}{\bf{\Sigma}}^{-1}\bigl[{\bf{\Sigma}}_0\bigr]-2^{-1}\log\det{\bf{\Sigma}}
\end{align*}
for ${\bf{\Sigma}}\in\mathcal{M}_p^{+}$. Note that $ f({\bf{\Sigma}})$ has the unique maximum point at ${\bf{\Sigma}}={\bf{\Sigma}}_0$. Since we have
\begin{align*}
    {\bf{\Sigma}}_0={\bf{\Sigma}}_{m^*}(\theta_{m^*,0})\neq {\bf{\Sigma}}_m(\theta_{m})
\end{align*}
for $m\in\mathfrak{M}^c$, it is shown that
\begin{align}
    \mathbb{H}_{m,0}(\theta_m)=f\bigl({\bf{\Sigma}}_m
    (\theta_m)\bigr)<f({\bf{\Sigma}}_0)=f\bigl({\bf{\Sigma}}_{m^*}
    (\theta_{m^*,0})\bigr)=\mathbb{H}_{m^*,0}(\theta_{m^*,0}), \label{Hine}
\end{align}
which yields $\mathbb{H}_{m,0}(\theta_m)\neq\mathbb{H}_{m^*,0}(\theta_{m^*,0})$ for $m\in\mathfrak{M}^c$. For $m\in\{1,\cdots,M\}$, it holds from Lemma \ref{Huni} that $\mathbb{H}_{m,n}(\theta_m)\stackrel{p}{\longrightarrow}\mathbb{H}_{m,0}(\theta_m)\ \mbox{uniformly in }\theta_m$. Furthermore, {[\bf{C2}]} {\rm{(a)}} yields the uniqueness of $\bar{\theta}_m$. Therefore, in a similar way to the proof of Theorem 5.1 (ii) in Eguchi and Masuda \cite{Eguchi(2018)}, from Lemma \ref{thetaasym}, we can prove the result. See also Appendix \ref{Appendixproof}.
\end{proof}

\section{Appendix}
\subsection{The proofs of Theorem \ref{BICcons1} and Theorem \ref{BICcons2}}\label{Appendixproof}
\begin{proof}[\bf{Proof of Theorem \ref{BICcons1}}]
Let
\begin{align*}
    A_{m,n}=\Bigl\{\hat{\theta}_{m,n}\in{\rm{Int}}(\Theta_m)\Bigr\}.
\end{align*}
Since Model $m^*$ is nested in Model $m$, there exist a matrix $F\in\mathbb{R}^{q_m\times q_{m^*}}$ with $F^{\top}F=\mathbb{I}_{q_m^*}$ and a constant $c\in\mathbb{R}^{q_m}$ such that
\begin{align*}
    \mathbb{H}_{m^*,n}(\theta_{m^*})=\mathbb{H}_{m,n}(f_{m^*}(\theta_{m^*}))
\end{align*}
for all $\theta_{m^*}\in\Theta_{m^*}$, where $f_{m^*}(\theta_{m^*})=F\theta_{m^*}+c$.
Using the Taylor expansion, we have
\begin{align*}
    \mathbb{H}_{m^*,n}(\hat{\theta}_{m^*,n})
    &=\mathbb{H}_{m,n}(f_{m^*}(\hat{\theta}_{m^*,n}))\\
    &=\mathbb{H}_{m,n}(\hat{\theta}_{m,n})+\sqrt{n}\bigl(f_{m^*}(\hat{\theta}_{m^*,n})-\hat{\theta}_{m,n}\bigr)^{\top}{\rm{R}}_{n}\sqrt{n}\bigl(f_{m^*}(\hat{\theta}_{m^*,n})-\hat{\theta}_{m,n}\bigr)
\end{align*}
on $A_{m,n}$ for $m\in\mathfrak{M}\slash\{m^*\}$, where
\begin{align*}
    {\rm{R}}_{n}=n^{-1}\int_0^1 (1-\lambda)\partial_{\theta_m}^2\mathbb{H}_{m,n}(\tilde{\theta}_{m,m^*,\lambda,n})\dd\lambda.
\end{align*}
and
\begin{align*}
    \tilde{\theta}_{m,m^*,\lambda,n}=
    \hat{\theta}_{m,n}+\lambda\bigl(f_{m^*}(\hat{\theta}_{m^*,n})-\hat{\theta}_{m,n}\bigr).
\end{align*}
Note that
\begin{align*}
    f_{m^*}(\hat{\theta}_{m^*,n})-\theta_{m,0}&=f_{m^*}(\hat{\theta}_{m^*,n})-f_{m^*}(\theta_{m^*,0})\\
    &=F\hat{\theta}_{m^*,n}+c-(F\theta_{m^*,0}+c)=F(\hat{\theta}_{m^*,n}-\theta_{m^*,0})
\end{align*}
for $m\in\mathfrak{M}\slash\{m^*\}$. Lemma \ref{thetaasym} yields
\begin{align}
\begin{split}
    \sqrt{n}\bigl(f_{m^*}(\hat{\theta}_{m^*,n})-\hat{\theta}_{m,n}\bigr)&=\sqrt{n}\bigl(f_{m^*}(\hat{\theta}_{m^*,n})-\theta_{m,0}\bigr)-\sqrt{n}\bigl(\hat{\theta}_{m,n}-\theta_{m,0}\bigr)\\
    &=F\Bigl\{\sqrt{n}(\hat{\theta}_{m^*,n}-\theta_{m^*,0})\Bigr\}-\sqrt{n}\bigl(\hat{\theta}_{m,n}-\theta_{m,0}\bigr)=O_p(1)
\end{split}\label{fthetaOp1}
\end{align}
for $m\in\mathfrak{M}\slash\{m^*\}$. Furthermore, using the Taylor expansion, it is shown that
\begin{align*}
    \partial_{\theta^{(i)}_m}\partial_{\theta^{(j)}_m}
    \mathbb{H}_{m,n}(\tilde{\theta}_{m,m^*,\lambda,n})&=\partial_{\theta^{(i)}_m}\partial_{\theta^{(j)}_m}\mathbb{H}_{m,n}(\theta_{m,0})\\
    &\quad\qquad+\sum_{k=1}^q\biggl(\int_{0}^{1}
    \partial_{\theta_m^{(i)}}\partial_{\theta_m^{(j)}}
    \partial_{\theta_m^{(k)}}
    \mathbb{H}_{m,n}(\check{\theta}_{m,m^*,\lambda,s,n}
    )\dd s\biggr)\bigl(\tilde{\theta}^{(k)}_{m,m^*,\lambda,n}-\theta^{(k)}_{m,0}\bigr)\\
    &=\partial_{\theta^{(i)}_m}\partial_{\theta^{(j)}_m}\mathbb{H}_{m,n}(\theta_{m,0})\\
    &\qquad\quad+\sum_{k=1}^q\biggl(\int_{0}^{1}
    \partial_{\theta_m^{(i)}}\partial_{\theta_m^{(j)}}
    \partial_{\theta_m^{(k)}}
    \mathbb{H}_{m,n}(\check{\theta}_{m,m^*,\lambda,s,n}
    )\dd s\biggr)\bigl(\hat{\theta}^{(k)}_{m,n}-\theta^{(k)}_{m,0}\bigr)\\
    &\qquad\quad+\sum_{k=1}^q\biggl(\int_{0}^{1}
    \partial_{\theta_m^{(i)}}\partial_{\theta_m^{(j)}}
    \partial_{\theta_m^{(k)}}
    \mathbb{H}_{m,n}(\check{\theta}_{m,m^*,\lambda,s,n})\dd s\biggr)\lambda\bigl(f_{m^*}(\hat{\theta}_{m^*,n})^{(k)}-\hat{\theta}^{(k)}_{m,n}\bigr)
\end{align*}
for $i,j=1,\cdots,q$ and $m\in\mathfrak{M}\slash\{m^*\}$, where
\begin{align*}
    \check{\theta}_{m,m^*,\lambda,s,n}=\theta_{m,0}+
    s\bigl(\tilde{\theta}_{m,m^*,\lambda,n}-\theta_{m,0}\bigr).
\end{align*}
Consequently, we obtain
\begin{align*}
     \bigl({\rm{R}}_{n}\bigr)_{ij}
     &=n^{-1}\int_0^1 (1-\lambda)\partial_{\theta^{(i)}_m}\partial_{\theta^{(j)}_m}\mathbb{H}_{m,n}(\tilde{\theta}_{m,m^*,\lambda,n})\dd\lambda\\
    &=2^{-1}n^{-1}\partial_{\theta^{(i)}_m}\partial_{\theta^{(j)}_m}
    \mathbb{H}_{m,n}(\theta_{m,0})\\
    &\quad+\sum_{k=1}^q\int_{0}^{1}(1-\lambda)\int_{0}^{1}
    n^{-\frac{3}{2}}\partial_{\theta_m^{(i)}}\partial_{\theta_m^{(j)}}
    \partial_{\theta_m^{(k)}}
    \mathbb{H}_{m,n}(\check{\theta}_{m,m^*,\lambda,s,n}
    )\dd s \dd\lambda\sqrt{n}\bigl(\hat{\theta}^{(k)}_{m,n}-\theta^{(k)}_{m,0}\bigr)\\    
    &\quad+\sum_{k=1}^q\int_{0}^{1}(1-\lambda)\lambda\int_{0}^{1}n^{-\frac{3}{2}}
    \partial_{\theta_m^{(i)}}\partial_{\theta_m^{(j)}}
    \partial_{\theta_m^{(k)}}
    \mathbb{H}_{m,n}(\check{\theta}_{m,m^*,\lambda,s,n})\dd s\dd\lambda\sqrt{n}\bigl(f_{m^*}(\hat{\theta}_{m^*,n})^{(k)}-\hat{\theta}^{(k)}_{m,n}\bigr)
\end{align*}
for $i,j=1,\cdots,q$ and $m\in\mathfrak{M}\slash\{m^*\}$. Lemma \ref{A1} and Lemma \ref{thetaasym} show
\begin{align}
\begin{split}
    &\quad\ \Biggl|\sum_{k=1}^q\int_{0}^{1}(1-\lambda)\int_{0}^{1}
    n^{-\frac{3}{2}}\partial_{\theta_m^{(i)}}\partial_{\theta_m^{(j)}}
    \partial_{\theta_m^{(k)}}
    \mathbb{H}_{m,n}(\check{\theta}_{m,m^*,\lambda,s,n}
    )\dd s \dd\lambda\sqrt{n}\bigl(\hat{\theta}^{(k)}_{m,n}-\theta^{(k)}_{m,0}\bigr)\Biggr|\\
    &\leq \sum_{k=1}^q\biggl\{\int_{0}^{1}(1-\lambda)\int_{0}^{1}
    \Bigl|n^{-\frac{3}{2}}\partial_{\theta_m^{(i)}}\partial_{\theta_m^{(j)}}
    \partial_{\theta_m^{(k)}}
    \mathbb{H}_{m,n}(\check{\theta}_{m,m^*,\lambda,s,n})\Bigr|\dd s \dd\lambda\biggr\}\Bigl|\sqrt{n}\bigl(\hat{\theta}^{(k)}_{m,n}-\theta^{(k)}_{m,0}\bigr)\Bigr|\\
    &\leq\biggl(\int_0^1 (1-\lambda)d\lambda\biggr)\sum_{k=1}^q\sup_{\theta_m\in\Theta_m}\Bigl|n^{-\frac{3}{2}}\partial_{\theta_m^{(i)}}\partial_{\theta_m^{(j)}}
    \partial_{\theta_m^{(k)}}
    \mathbb{H}_{m,n}(\theta_m)\Bigr|\Bigl|\sqrt{n}\bigl(\hat{\theta}^{(k)}_{m,n}-\theta^{(k)}_{m,0}\bigr)\Bigr|\\
    &\stackrel{p}{\longrightarrow}0
\end{split}\label{intP1}
\end{align}
for $i,j=1,\cdots,q$ and $m\in\mathfrak{M}\slash\{m^*\}$. In a similar way, it holds from Lemma \ref{A1}, Lemma \ref{thetaasym} and (\ref{fthetaOp1}) that
\begin{align}
\begin{split}
    &\quad\ \Biggl|\sum_{k=1}^q\int_{0}^{1}(1-\lambda)\lambda\int_{0}^{1}
    n^{-\frac{3}{2}}\partial_{\theta_m^{(i)}}\partial_{\theta_m^{(j)}}
    \partial_{\theta_m^{(k)}}
    \mathbb{H}_n(\check{\theta}_{m,m^*,\lambda,s,n}
    )\dd s\dd\lambda\sqrt{n}\bigl(f_{m^*}(\hat{\theta}_{m^*,n})^{(k)}-\hat{\theta}^{(k)}_{m,n}\bigr)\Biggr|\\
    &\leq\biggl(\int_0^1 (1-\lambda)\lambda d\lambda\biggr)\sum_{k=1}^q\sup_{\theta_m\in\Theta_m}\Bigl|n^{-\frac{3}{2}}\partial_{\theta_m^{(i)}}\partial_{\theta_m^{(j)}}
    \partial_{\theta_m^{(k)}}
    \mathbb{H}_n(\theta_m)\Bigr|
    \Bigl|\sqrt{n}\bigl(f_{m^*}(\hat{\theta}_{m^*,n})^{(k)}-\hat{\theta}^{(k)}_{m,n}\bigr)\Bigr|\\
    &\stackrel{p}{\longrightarrow}0
\end{split}\label{intP2}
\end{align}
for $i,j=1,\cdots,q$ and $m\in\mathfrak{M}\slash\{m^*\}$. Thus, it follows from Lemma \ref{A1}, (\ref{intP1}) and (\ref{intP2}) that 
\begin{align}
    {\rm{R}}_{n}\stackrel{p}{\longrightarrow}-2^{-1}{\bf{\Gamma}}_{m,0}. \label{Rconv}
\end{align}
Furthermore, we see from Lemma \ref{thetaasym}, (\ref{fthetaOp1}) and (\ref{Rconv}) that
\begin{align}
\begin{split}
    &\quad\ {\rm{QBIC}}^{(m^*)}_{1,n}-{\rm{QBIC}}^{(m)}_{1,n}\\
    &=-2\mathbb{H}_{m^*,n}(\hat{\theta}_{m^*,n})+2\mathbb{H}_{m,n}(\hat{\theta}_{m,n})+\log\det{n\tilde{\bf{\Gamma}}_{m^*,n}(\hat{\theta}_{m^*,n})}
    -\log\det{n\tilde{\bf{\Gamma}}_{m,n}(\hat{\theta}_{m,n})}\\
    &=-2\sqrt{n}\bigl(f_{m^*}(\hat{\theta}_{m^*,n})-\hat{\theta}_{m,n}\bigr)^{\top}{\rm{R}}_{n}\sqrt{n}\bigl(f_{m^*}(\hat{\theta}_{m^*,n})-\hat{\theta}_{m,n}\bigr)\\
    &\qquad\qquad\qquad\qquad
    +\log\det{\tilde{\bf{\Gamma}}_{m^*,n}(\hat{\theta}_{m^*,n})}-\log\det{\tilde{\bf{\Gamma}}_{m,n}(\hat{\theta}_{m,n})}-(q_m-q_{m^*})\log n
\end{split}\label{BICine}
\end{align}
on $A_{m,n}$, and
\begin{align}
\begin{split}
    &-2\sqrt{n}\bigl(f_{m^*}(\hat{\theta}_{m^*,n})-\hat{\theta}_{m,n}\bigr)^{\top}{\rm{R}}_{n}\sqrt{n}\bigl(f_{m^*}(\hat{\theta}_{m^*,n})-\hat{\theta}_{m,n}\bigr)\\
    &\qquad\qquad\qquad\qquad\qquad
    +\log\det{\tilde{\bf{\Gamma}}_{m^*,n}(\hat{\theta}_{m^*,n})}-\log\det{\tilde{\bf{\Gamma}}_{m,n}(\hat{\theta}_{m,n})}=O_p(1).
\end{split}\label{BICOp1}
\end{align}
Therefore, it follows from (\ref{BICine}), (\ref{BICOp1}) and Slutsuky's Theorem that 
\begin{align*}
    &\quad\ {\bf{P}}\Bigl({\rm{QBIC}}^{(m^*)}_{1,n}<{\rm{QBIC}}^{(m)}_{1,n}\Bigr)\\
    &={\bf{P}}\biggl(\Bigl\{{\rm{QBIC}}^{(m^*)}_{1,n}<{\rm{QBIC}}^{(m)}_{1,n}\Bigr\}\cap A_{m,n}\biggr)+{\bf{P}}\biggl(\Bigl\{{\rm{QBIC}}^{(m^*)}_{1,n}<{\rm{QBIC}}^{(m)}_{1,n}\Bigr\}\cap A_{m,n}^c\biggr)\\
    &={\bf{P}}\left(\Biggl\{-2\sqrt{n}\bigl(f_{m^*}(\hat{\theta}_{m^*,n})-\hat{\theta}_{m,n}\bigr)^{\top}{\rm{R}}_{n}\sqrt{n}\bigl(f_{m^*}(\hat{\theta}_{m^*,n})-\hat{\theta}_{m,n}\bigr)\right.\\
    &\left.\qquad\qquad+\log\det{\tilde{\bf{\Gamma}}_{m^*,n}(\hat{\theta}_{m^*,n})}-\log\det{\tilde{\bf{\Gamma}}_{m,n}(\hat{\theta}_{m,n})}
    <(q_m-q_{m^*})\log n\Biggr\}\cap A_{m,n}\right)+{\bf{P}}\Bigl(A_{m,n}^c\Bigr)\\
    &={\bf{P}}\Biggl((q_m-q_{m^*})^{-1}(\log n)^{-1}\biggl\{-2\sqrt{n}\bigl(f_{m^*}(\hat{\theta}_{m^*,n})-\hat{\theta}_{m,n}\bigr)^{\top}{\rm{R}}_{n}\sqrt{n}\bigl(f_{m^*}(\hat{\theta}_{m^*,n})-\hat{\theta}_{m,n}\bigr)\\
    &\qquad\qquad\qquad\qquad\qquad\quad+\log\det{\tilde{\bf{\Gamma}}_{m^*,n}(\hat{\theta}_{m^*,n})}-\log\det{\tilde{\bf{\Gamma}}_{m,n}(\hat{\theta}_{m,n})}<1\biggr\}\Biggr)+{\bf{P}}\Bigl(A_{m,n}^c\Bigr)\longrightarrow 1
\end{align*}
as $n\longrightarrow\infty$ for $m\in\mathfrak{M}\slash\{m^*\}$. In a similar way, we can obtain
\begin{align*}
    {\bf{P}}\Bigl({\rm{QBIC}}^{(m^*)}_{2,n}<{\rm{QBIC}}^{(m)}_{2,n}\Bigr)\longrightarrow 1
\end{align*}
as $n\longrightarrow\infty$ for $m\in\mathfrak{M}\slash\{m^*\}$.
\end{proof}
\begin{lemma}\label{optcons}
Under {[\bf{A}]} and {[\bf{C2}]} {\rm{(a)}}, as $n\longrightarrow\infty$,
\begin{align*}
    \hat{\theta}_{m,n}\stackrel{p}{\longrightarrow}\bar{\theta}_{m}
\end{align*}
for $m\in\{1,\cdots,M\}$.
\end{lemma}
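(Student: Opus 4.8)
The plan is to prove this by the standard argmax (minimum-contrast) consistency argument, using the uniform convergence already established in Lemma \ref{Huni} together with the identifiability condition {\bf{[C2]}} {\rm{(a)}}. Recall that $\hat{\theta}_{m,n}$ maximizes $\mathbb{L}_{m,n}$, hence also maximizes $\mathbb{H}_{m,n}$ and $n^{-1}\mathbb{H}_{m,n}$, since the normalizing factor $(2\pi h_n)^{\frac{np}{2}}$ does not depend on $\theta_m$; moreover, $\bar{\theta}_m$ is by definition the maximizer of the limit contrast $\mathbb{H}_{m,0}$. The key is to combine the uniform convergence $\sup_{\theta_m\in\Theta_m}|n^{-1}\mathbb{H}_{m,n}(\theta_m)-\mathbb{H}_{m,0}(\theta_m)|\stackrel{p}{\longrightarrow}0$ with a well-separation property of the maximizer $\bar{\theta}_m$.

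First I would establish that the maximizer is well separated. Fix $\varepsilon>0$ and set $K_{\varepsilon}=\Theta_m\cap\{\theta_m:|\theta_m-\bar{\theta}_m|\geq\varepsilon\}$. Since $\Theta_m$ is compact, $K_{\varepsilon}$ is compact, and since $\theta_m\mapsto{\bf{\Sigma}}_m(\theta_m)$ is smooth with ${\bf{\Sigma}}_m(\theta_m)$ positive definite, the map $\theta_m\mapsto\mathbb{H}_{m,0}(\theta_m)$ is continuous; hence it attains its supremum on $K_{\varepsilon}$ at some point $\theta_m^{\dagger}$. By the uniqueness in {\bf{[C2]}} {\rm{(a)}}, the fact that $\theta_m^{\dagger}\neq\bar{\theta}_m$ forces $\mathbb{H}_{m,0}(\theta_m^{\dagger})<\mathbb{H}_{m,0}(\bar{\theta}_m)$, so that $\delta:=\mathbb{H}_{m,0}(\bar{\theta}_m)-\sup_{\theta_m\in K_{\varepsilon}}\mathbb{H}_{m,0}(\theta_m)>0$.

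Next I would invoke the uniform convergence. On the event $E_n=\bigl\{\sup_{\theta_m\in\Theta_m}|n^{-1}\mathbb{H}_{m,n}(\theta_m)-\mathbb{H}_{m,0}(\theta_m)|<\delta/2\bigr\}$, which by Lemma \ref{Huni} satisfies ${\bf{P}}(E_n)\to1$, one has for every $\theta_m\in K_{\varepsilon}$
\[
    n^{-1}\mathbb{H}_{m,n}(\theta_m)<\mathbb{H}_{m,0}(\theta_m)+\delta/2\leq\mathbb{H}_{m,0}(\bar{\theta}_m)-\delta/2,
\]
whereas, evaluating at $\bar{\theta}_m\in\Theta_m$ and using the maximizing property of $\hat{\theta}_{m,n}$,
\[
    n^{-1}\mathbb{H}_{m,n}(\hat{\theta}_{m,n})\geq n^{-1}\mathbb{H}_{m,n}(\bar{\theta}_m)>\mathbb{H}_{m,0}(\bar{\theta}_m)-\delta/2.
\]
Comparing the two displays shows that on $E_n$ the maximizer $\hat{\theta}_{m,n}$ cannot lie in $K_{\varepsilon}$, i.e. $|\hat{\theta}_{m,n}-\bar{\theta}_m|<\varepsilon$. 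Therefore ${\bf{P}}(|\hat{\theta}_{m,n}-\bar{\theta}_m|\geq\varepsilon)\leq{\bf{P}}(E_n^c)\to0$, which gives $\hat{\theta}_{m,n}\stackrel{p}{\longrightarrow}\bar{\theta}_m$.

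Since the uniform convergence is already supplied by Lemma \ref{Huni}, there is no serious difficulty here; the only point requiring care is the well-separation step, which relies on the continuity of the limit contrast $\mathbb{H}_{m,0}$ (guaranteed by the smoothness and positive definiteness of ${\bf{\Sigma}}_m(\cdot)$) together with the uniqueness furnished by {\bf{[C2]}} {\rm{(a)}}. Note also that the argument applies to every $m\in\{1,\cdots,M\}$, including misspecified models, because Lemma \ref{Huni} and {\bf{[C2]}} {\rm{(a)}} hold for all such $m$; for $m\in\mathfrak{M}$ it recovers Lemma \ref{thetahat}, since then $\bar{\theta}_m=\theta_{m,0}$.
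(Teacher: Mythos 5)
Your proof is correct. The paper itself gives no details for this lemma, deferring entirely to Lemma 36 of Kusano and Uchida (2023); your argument is the standard minimum-contrast consistency proof --- uniform convergence of $n^{-1}\mathbb{H}_{m,n}$ to $\mathbb{H}_{m,0}$ from Lemma \ref{Huni}, combined with the well-separation of the unique maximizer $\bar{\theta}_m$ obtained from compactness of $\Theta_m$, continuity of $\mathbb{H}_{m,0}$, and \textbf{[C2]} (a) --- which is exactly the argument the cited reference supplies, so you have in effect filled in the omitted details rather than taken a different route. The only (trivial) point left implicit is that $K_{\varepsilon}$ may be empty for large $\varepsilon$, in which case the conclusion for that $\varepsilon$ is immediate.
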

\begin{proof}
    The result can be shown in a similar way to Lemma 36 in Kusano and Uchida \cite{Kusano(2023)}.
\end{proof}
\begin{proof}[\bf{Proof of Theorem \ref{BICcons2}}]
Note that $\mathbb{H}_{m,0}(\theta_{m})$ is continuous in $\theta_m\in\Theta_m$. It holds from Lemma \ref{optcons} and the continuous mapping theorem that 
\begin{align}
    \mathbb{H}_{m,0}(\hat{\theta}_{m,n})\stackrel{p}{\longrightarrow}\mathbb{H}_{m,0}(\bar{\theta}_{m}) \label{Hp}
\end{align}
for $m\in\{1,\cdots,M\}$. Thus, it follows from Lemma \ref{Huni} and (\ref{Hp}) that 
\begin{align*}
    \Bigl|n^{-1}\mathbb{H}_{m,n}(\hat{\theta}_{m,n})-\mathbb{H}_{m,0}(\bar{\theta}_{m})\Bigr|
    &\leq \Bigl|n^{-1}\mathbb{H}_{m,n}(\hat{\theta}_{m,n})-\mathbb{H}_{m,0}(\hat{\theta}_{m,n})\Bigr|+\Bigl|\mathbb{H}_{m,0}(\hat{\theta}_{m,n})-\mathbb{H}_{m,0}(\bar{\theta}_{m})\Bigr|\\
    &\leq\sup_{\theta_{m}\in\Theta_{m}}\Bigl|n^{-1}\mathbb{H}_{m,n}(\theta_{m})-\mathbb{H}_{m,0}(\theta_{m})\Bigr|+\Bigl|\mathbb{H}_{m,0}(\hat{\theta}_{m,n})-\mathbb{H}_{m,0}(\bar{\theta}_{m})\Bigr|\\
    &\stackrel{p}{\longrightarrow}0
\end{align*}
for $m\in\{1,\cdots,M\}$, which yields
\begin{align}
    n^{-1}\mathbb{H}_{m,n}(\hat{\theta}_{m,n})\stackrel{p}{\longrightarrow}\mathbb{H}_{m,0}(\bar{\theta}_{m}).
    \label{Hmprob}
\end{align}
In an analogous manner to Lemma \ref{thetaasym}, it is shown that
\begin{align}
    \log\det{\tilde{\bf{\Gamma}}_{m,n}(\hat{\theta}_{m,n})}\stackrel{p}{\longrightarrow}\log\det{\Bigl\{-\partial^2_{\theta_m}\mathbb{H}_{m,0}(\bar{\theta}_m})\Bigr\} \label{logdet2}
\end{align}
for $m\in\{1,\cdots,M\}$. For $m\in\mathfrak{M}^c$, we see from (\ref{Hmprob}) and (\ref{logdet2}) that
\begin{align}
\begin{split}
    &\quad\ n^{-1}{\rm{QBIC}}^{(m^*)}_{1,n}-n^{-1}{\rm{QBIC}}^{(m)}_{1,n}\\
    &=-2n^{-1}\mathbb{H}_{m^*,n}(\hat{\theta}_{m^*,n})+2n^{-1}\mathbb{H}_{m,n}(\hat{\theta}_{m,n})\\
    &\qquad\qquad\qquad\qquad\qquad+n^{-1}\log\det{n\tilde{\bf{\Gamma}}_{m^*,n}(\hat{\theta}_{m^*,n})}
    -n^{-1}\log\det{n\tilde{\bf{\Gamma}}_{m,n}(\hat{\theta}_{m,n})}\\
    &=-2n^{-1}\mathbb{H}_{m^*,n}(\hat{\theta}_{m^*,n})+2n^{-1}\mathbb{H}_{m,n}(\hat{\theta}_{m,n})+n^{-1}\log\det{\tilde{\bf{\Gamma}}_{m^*,n}(\hat{\theta}_{m^*,n})}\\
    &\qquad\qquad\qquad\qquad\qquad\qquad\qquad-n^{-1}\log\det{\tilde{\bf{\Gamma}}_{m,n}(\hat{\theta}_{m,n})}-(q_m-q_{m^*})n^{-1}\log n\\
    &\stackrel{p}{\longrightarrow}-2\mathbb{H}_{m^*,0}(\theta_{m^*,0})
    +2\mathbb{H}_{m,0}(\bar{\theta}_{m}).
\end{split}\label{BICprob}
\end{align}
Furthermore, (\ref{Hine}) shows
\begin{align}
    2\mathbb{H}_{m^*,0}(\theta_{m^*,0})-2\mathbb{H}_{m,0}(\bar{\theta}_{m})>0
    \label{Hpos}
\end{align}
for $m\in\mathfrak{M}^c$. Therefore, it holds from (\ref{BICprob}) and (\ref{Hpos}) that 
\begin{align*}
    &\quad\ {\bf{P}}\Bigl({\rm{QBIC}}^{(m^*)}_{1,n}<{\rm{QBIC}}^{(m)}_{1,n}\Bigr)\\
    &={\bf{P}}\Bigl(n^{-1}{\rm{QBIC}}^{(m^*)}_{1,n}-n^{-1}{\rm{QBIC}}^{(m)}_{1,n}+2\mathbb{H}_{m^*,0}(\theta_{m^*,0})-2\mathbb{H}_{m,0}(\bar{\theta}_{m})<2\mathbb{H}_{m^*,0}(\theta_{m^*,0})-2\mathbb{H}_{m,0}(\bar{\theta}_{m})\Bigr)\\
    &\longrightarrow 1
\end{align*}
as $n\longrightarrow\infty$ for $m\in\mathfrak{M}^c$. In a similar way, we can prove 
\begin{align*}
    {\bf{P}}\Bigl({\rm{QBIC}}^{(m^*)}_{2,n}<{\rm{QBIC}}^{(m)}_{2,n}\Bigr)\longrightarrow 1
\end{align*}
as $n\longrightarrow\infty$ for $m\in\mathfrak{M}^c$.
\end{proof}
\subsection{The check of identifiability}\label{idensec}
\setlength{\abovedisplayskip}{8pt}
\setlength{\belowdisplayskip}{8pt}
In a similar way to Appendix 6.2 in Kusano and Uchida \cite{Kusano(2024c)}, to check [{\bf{C1}}] ({\rm{a}}), it is sufficient to show
\begin{align*}
    {\bf{\Sigma}}_1(\theta_{1})={\bf{\Sigma}}_1(\theta_{1,0})\Longrightarrow \theta_1=\theta_{1,0} 
\end{align*}
and ${\bf{\Gamma}}_{1,0}>0$. Since $\rank\Delta_{1,0}=22$, we see from Lemma 35 in Kusano and Uchida \cite{Kusano(2023)} that ${\bf{\Gamma}}_{1,0}>0$. Assume
\begin{align}
    {\bf{\Sigma}}_1(\theta_{1})={\bf{\Sigma}}_1(\theta_{1,0}).
    \label{iden}
\end{align}
Note that $\theta^{(1)}_1$, $\theta^{(2)}_1$ and $\theta^{(10)}_1$ are not zero. The (1,2)-th, (1,3)-th, and (2,3)-th elements of  (\ref{iden}) show
\begin{align}
\begin{split}
    \theta^{(10)}_1
    &=\theta^{(1)}_1\theta^{(10)}_1\times \frac{1}{\theta^{(1)}_1\theta^{(2)}_1\theta^{(10)}_1}\times \theta^{(2)}_1\theta^{(10)}_1\\
    &={\bf{\Sigma}}_1(\theta_1)_{12}\times 
    \frac{1}{{\bf{\Sigma}}_1(\theta_1)_{23}}\times {\bf{\Sigma}}_1(\theta_1)_{13}\\
    &={\bf{\Sigma}}_1(\theta_{1,0})_{12}\times 
    \frac{1}{{\bf{\Sigma}}_1(\theta_{1,0})_{23}}\times {\bf{\Sigma}}_1(\theta_{1,0})_{13}\\
    &=\theta^{(1)}_{1,0}\theta^{(10)}_{1,0}\times \frac{1}{\theta^{(1)}_{1,0}\theta^{(2)}_{1,0}\theta^{(10)}_{1,0}}\times \theta^{(2)}_{1,0}\theta^{(10)}_{1,0}=\theta^{(10)}_{1,0}.
\end{split}\label{10}
\end{align}
Thus, it holds from (\ref{10}) and the (1,2)-th, (1,3)-th, and (1,4)-th elements of (\ref{iden}) that
\begin{align*}
    \bigl(\theta^{(1)}_1-\theta_{1,0}^{(1)}\bigr)\theta^{(10)}_{1,0}=0,\quad \bigl(\theta^{(2)}_1-\theta_{1,0}^{(2)}\bigr)\theta^{(10)}_{1,0}=0,\quad \bigl(\theta^{(3)}_1-\theta_{1,0}^{(3)}\bigr)\theta^{(10)}_{1,0}=0
\end{align*}
and $\theta^{(10)}_{1,0}$ is not zero, which yields
\begin{align}
    \theta^{(1)}_1=\theta_{1,0}^{(1)},\quad 
    \theta^{(2)}_1=\theta_{1,0}^{(2)},\quad
    \theta^{(3)}_1=\theta_{1,0}^{(3)}.\label{1-3}
\end{align}
It follows from the (1,1)-th, (2,2)-th, (3,3)-th, and (4,4)-th elements of (\ref{iden}) that
\begin{align*}
    \theta^{(10)}_1+\theta^{(11)}_1&=
    \theta^{(10)}_{1,0}+\theta^{(11)}_{1,0},\\
    \theta^{(1)2}_1\theta^{(10)}_1+\theta^{(12)}_1&=
    \theta^{(1)2}_{1,0}\theta^{(10)}_{1,0}
    +\theta^{(12)}_{1,0},\\
    \theta^{(2)2}_1\theta^{(10)}_1+\theta^{(13)}_1&=
    \theta^{(2)2}_{1,0}\theta^{(10)}_{1,0}
    +\theta^{(13)}_{1,0},\\
    \theta^{(3)2}_1\theta^{(10)}_1+\theta^{(14)}_1&=
    \theta^{(3)2}_{1,0}\theta^{(10)}_{1,0}
    +\theta^{(14)}_{1,0},
\end{align*}
so that (\ref{10}) and (\ref{1-3}) show
\begin{align}
    \theta^{(11)}_1=\theta_{1,0}^{(11)},\quad
    \theta^{(12)}_1=\theta_{1,0}^{(12)},\quad 
    \theta^{(13)}_1=\theta_{1,0}^{(13)},\quad
    \theta^{(14)}_1=\theta_{1,0}^{(14)}.\label{11-14}
\end{align}
Moreover, we see from (\ref{10}) and the (1,5)-th, and (1,8)-th elements of (\ref{iden}) that 
\begin{align*}
    (\theta_1^{(8)}-\theta_{1,0}^{(8)})\theta_{1,0}^{(10)}=0,\quad
    (\theta_1^{(9)}-\theta_{1,0}^{(9)})\theta_{1,0}^{(10)}=0
\end{align*}
and $\theta^{(10)}_{1,0}$ is not zero, which implies
\begin{align}
    \theta_1^{(8)}=\theta_{1,0}^{(8)},\quad
    \theta_1^{(9)}=\theta_{1,0}^{(9)}. \label{8-9}
\end{align}
Since it follows from (\ref{10}), (\ref{8-9}), and the (1.6)-th, (1.7)-th, (1.9)-th, and (1.10)-th elements of (\ref{iden}) that
\begin{align*}
    (\theta^{(4)}_{1}-\theta^{(4)}_{1,0})\theta_{1,0}^{(8)}\theta_{1,0}^{(10)}&=0,\quad
    (\theta^{(5)}_{1}-\theta^{(5)}_{1,0})\theta_{1,0}^{(8)}\theta_{1,0}^{(10)}=0,\\
    (\theta^{(6)}_{1}-\theta^{(6)}_{1,0})\theta_{1,0}^{(9)}\theta_{1,0}^{(10)}&=0,\quad
    (\theta^{(7)}_{1}-\theta^{(7)}_{1,0})\theta_{1,0}^{(9)}\theta_{1,0}^{(10)}=0
\end{align*}
and $\theta_{1,0}^{(8)}$, $\theta_{1,0}^{(9)}$ and $\theta_{1,0}^{(10)}$ are not zero, one gets
\begin{align}
    \theta_1^{(4)}=\theta_{1,0}^{(4)},\quad
    \theta_1^{(5)}=\theta_{1,0}^{(5)},\quad
    \theta_1^{(6)}=\theta_{1,0}^{(6)},\quad
    \theta_1^{(7)}=\theta_{1,0}^{(7)}.\label{4-7}
\end{align}
Note that the (5.6)-th, and (8.9)-th elements of (\ref{iden}) are
\begin{align*}
    \theta_1^{(4)}\theta_1^{(8)2}\theta_1^{(10)}
    +\theta_1^{(4)}\theta_1^{(21)}
    &=\theta_{1,0}^{(4)}
    \theta_{1,0}^{(8)2}\theta_{1,0}^{(10)}
    +\theta_{1,0}^{(4)}\theta_{1,0}^{(21)},\\
    \theta_1^{(6)}\theta_1^{(9)2}\theta_1^{(10)}
    +\theta_1^{(6)}\theta_1^{(22)}
    &=\theta_{1,0}^{(6)}
    \theta_{1,0}^{(9)2}\theta_{1,0}^{(10)}
    +\theta_{1,0}^{(6)}\theta_{1,0}^{(22)}
\end{align*}
and $\theta_{1,0}^{(4)}$ and $\theta_{1,0}^{(6)}$ are not zero. (\ref{10}), (\ref{8-9}) and (\ref{4-7}) implies 
\begin{align*}
    \theta_{1,0}^{(4)}(\theta_1^{(21)}-\theta_{1,0}^{(21)})=0,\quad \theta_{1,0}^{(6)}(\theta_1^{(22)}-\theta_{1,0}^{(22)})=0,
\end{align*}
which yields 
\begin{align}
    \theta_{1}^{(21)}=\theta_{1,0}^{(21)},\quad \theta_{1}^{(22)}=\theta_{1,0}^{(22)}. \label{21-22}
\end{align}
As the (5.5)-th, (6.6)-th, (7.7)-th, (8.8)-th, (9.9)-th and (10.10)-th elements of (\ref{iden}) are
\begin{align*}
    \theta_1^{(8)2}\theta_1^{(10)}+\theta_1^{(21)}+
    \theta_1^{(15)}&=
    \theta_{1,0}^{(8)2}
    \theta_{1,0}^{(10)}+\theta_{1,0}^{(21)}+
    \theta_{1,0}^{(15)},\\
    \theta_1^{(4)2}
    \theta_1^{(8)2}\theta_1^{(10)}+\theta_1^{(4)2}
    \theta_1^{(21)}+
    \theta_1^{(16)}&=
    \theta_{1,0}^{(4)2}\theta_{1,0}^{(8)2}
    \theta_{1,0}^{(10)}+\theta_{1,0}^{(4)2}
    \theta_{1,0}^{(21)}+
    \theta_{1,0}^{(16)},\\
    \theta_1^{(5)2}
    \theta_1^{(8)2}\theta_1^{(10)}+\theta_1^{(5)2}
    \theta_1^{(21)}+
    \theta_1^{(17)}&=
    \theta_{1,0}^{(5)2}\theta_{1,0}^{(8)2}
    \theta_{1,0}^{(10)}+\theta_{1,0}^{(5)2}
    \theta_{1,0}^{(21)}+
    \theta_{1,0}^{(17)},\\
    \theta_1^{(9)2}\theta_1^{(10)}+\theta_1^{(22)}+
    \theta_1^{(18)}&=
    \theta_{1,0}^{(9)2}
    \theta_{1,0}^{(10)}+\theta_{1,0}^{(22)}+
    \theta_{1,0}^{(18)},\\
    \theta_1^{(6)2}
    \theta_1^{(9)2}\theta_1^{(10)}+\theta_1^{(6)2}
    \theta_1^{(22)}+
    \theta_1^{(19)}&=
    \theta_{1,0}^{(6)2}\theta_{1,0}^{(9)2}
    \theta_{1,0}^{(10)}+\theta_{1,0}^{(6)2}
    \theta_{1,0}^{(22)}+
    \theta_{1,0}^{(19)},\\
    \theta_1^{(7)2}
    \theta_1^{(9)2}\theta_1^{(10)}+\theta_1^{(7)2}
    \theta_1^{(22)}+
    \theta_1^{(20)}&=
    \theta_{1,0}^{(7)2}\theta_{1,0}^{(9)2}
    \theta_{1,0}^{(10)}+\theta_{1,0}^{(7)2}
    \theta_{1,0}^{(22)}+
    \theta_{1,0}^{(20)},
\end{align*}
we see from (\ref{10}), (\ref{8-9}), (\ref{4-7}) and (\ref{21-22}) that
\begin{align}
    \theta_1^{(15)}=\theta_{1,0}^{(15)},\quad
    \theta_1^{(16)}=\theta_{1,0}^{(16)},\quad
    \theta_1^{(17)}=\theta_{1,0}^{(17)},\quad
    \theta_1^{(18)}=\theta_{1,0}^{(18)},\quad
    \theta_1^{(19)}=\theta_{1,0}^{(19)},\quad
    \theta_1^{(20)}=\theta_{1,0}^{(20)}. \label{15-20}
\end{align}
Therefore, it follows from (\ref{10})-(\ref{15-20}) that $\theta_{1}=\theta_{1,0}$, which completes the proof.
\end{document}